\newcommand{\Z}{\mathbb{Z}}
\newcommand{\Q}{\mathbb{Q}}
\newcommand{\sN}{\mathsf{N}}
\newcommand{\pe}{\mathfrak{p}}
\newcommand{\fp}{\mathfrak{p}}
\newcommand{\qu}{\mathfrak{q}}
\newcommand{\Qu}{\mathfrak{Q}}
\newcommand{\ttilde}{\widetilde}
\newcommand{\dual}{\vee}
\newcommand{\PP}{\mathcal{P}}
\newcommand{\CC}{\mathcal{C}}
\newcommand{\RR}{\mathcal{R}}
\newcommand{\cG}{\mathcal{G}}
\newcommand{\UU}{\mathcal{U}}
\newcommand{\TT}{\mathcal{T}}
\newcommand{\cA}{\mathcal{A}}
\newcommand{\BB}{\mathcal{B}}
\newcommand{\LL}{\mathcal{L}}
\newcommand{\cS}{\mathcal{S}}
\newcommand{\VV}{\mathcal{V}}
\newcommand{\XX}{X}
\newcommand{\De}{D}
\newcommand{\DeP}{\De^{\perf}}
\newcommand{\otimesL}{\otimes^{\mathbb{L}}}
\newcommand{\derR}{\mathsf{R}}
\newcommand{\RG}{\derR\Gamma}
\newcommand{\RHom}{\derR\Hom}
\newcommand{\ol}[1]{\overline{#1}}
\newcommand{\iDet}{\mathrm{d}}
\DeclareMathOperator{\Gal}{Gal}
\DeclareMathOperator{\Ker}{Ker}
\DeclareMathOperator{\Coker}{Coker}
\DeclareMathOperator{\Image}{Im}
\DeclareMathOperator{\perf}{perf}
\DeclareMathOperator{\pd}{pd}
\DeclareMathOperator{\Fitt}{Fitt}
\DeclareMathOperator{\Det}{Det}
\DeclareMathOperator{\rank}{rank}
\DeclareMathOperator{\Hom}{Hom}
\DeclareMathOperator{\tor}{tor}
\DeclareMathOperator{\ram}{ram}
\DeclareMathOperator{\fin}{fin}
\DeclareMathOperator{\height}{ht}
\DeclareMathOperator{\alg}{alg}
\DeclareMathOperator{\Iw}{Iw}
\DeclareMathOperator{\Ann}{Ann}
\DeclareMathOperator{\id}{id}
\DeclareMathOperator{\loc}{loc}
\DeclareMathOperator{\Ext}{Ext}
\DeclareMathOperator{\ddet}{det}
\DeclareMathOperator{\cha}{char}
\DeclareMathOperator{\length}{length}
\DeclareMathOperator{\PN}{PN}
\DeclareMathOperator{\cd}{cd}
\let\oldenumerate\enumerate
\renewcommand{\enumerate}{
   \oldenumerate
   \setlength{\itemsep}{1pt}
   \setlength{\parskip}{0pt}
   \setlength{\parsep}{0pt}
}
\let\olditemize\itemize
\renewcommand{\itemize}{
   \olditemize
   \setlength{\itemsep}{1pt}
   \setlength{\parskip}{0pt}
   \setlength{\parsep}{0pt}
}
\theoremstyle{plain}
\newtheorem{thm}{Theorem}[section]
\newtheorem{lem}[thm]{Lemma}
\newtheorem{prop}[thm]{Proposition}
\newtheorem{cor}[thm]{Corollary}
\newtheorem{ass}[thm]{Assumption}
\theoremstyle{definition}
\newtheorem{defn}[thm]{Definition}
\newtheorem{rem}[thm]{Remark}
\title[Higher codimension Iwasawa theory]
{Higher codimension behavior in equivariant Iwasawa theory for CM-fields}
\author[T. Kataoka]{Takenori Kataoka}
\address{Faculty of Science and Technology, Keio University.
3-14-1 Hiyoshi, Kohoku-ku, Yokohama, Kanagawa 223-8522, Japan}
\email{tkataoka@math.keio.ac.jp}
\keywords{Iwasawa modules, CM-fields, algebraic $p$-adic $L$-functions, higher codimension}
\subjclass[2010]{11R23 (Primary)}
\date{}
\begin{document}

\begin{abstract}
In classical Iwasawa theory, we mainly study codimension one behavior of arithmetic modules.
Relatively recently, F.~M.~Bleher, T.~Chinburg, R.~Greenberg, M.~Kakde, G.~Pappas, R.~Sharifi, and M.~J.~Taylor started studying higher codimension behavior of unramified Iwasawa modules which are conjectured to be pseudo-null.
In this paper, by developing a general algebraic theory on perfect complexes, we obtain a new perspective of their work.
That allows us to extend the results to equivariant settings and, even in non-equivariant settings, to obtain more refined results concerning the higher codimension behavior.
\end{abstract}

\maketitle

\tableofcontents

\section{Introduction}\label{sec:01}

In classical Iwasawa theory we usually study various modules up to pseudo-null modules.
For instance, Iwasawa main conjecture is often formulated as a relation between the characteristic ideals of Iwasawa modules and $p$-adic $L$-functions, and in general characteristic ideals ignore pseudo-null modules.
On the other hand, Greenberg's conjecture claims that the unramified Iwasawa modules are pseudo-null in certain situations (see \cite[Conjecture 3.4.1]{BCG+}).
In that case, therefore, there was not satisfactory research on more detailed structure of unramified Iwasawa modules.

Against this background, F.~M.~Bleher, T.~Chinburg, R.~Greenberg, M.~Kakde, G.~Pappas, R.~Sharifi, and M.~J.~Taylor \cite{BCG+} began studying unramified Iwasawa modules that are assumed to be pseudo-null.
More concretely, when the base field is an imaginary quadratic field, 
they obtained a relation between unramified Iwasawa modules and pairs of $p$-adic $L$-functions.
In subsequent work \cite{BCG+b},
when the base field is a CM-field, an analogous result is obtained, though we have to replace unramified Iwasawa modules by certain alternatives that are defined via exterior powers.
An analogue for Selmer groups of elliptic curves is also studied by A.~Lei and B.~Palvannan \cite{LP19}.

In this paper, we extend and refine the results of \cite{BCG+} and \cite{BCG+b} from (abelian) equivariant perspective.
The term ``equivariant'' means that we allow $p$ to divide the order of the finite abelian extension concerned.
In that case, the ring theoretic property of the Iwasawa algebra gets worse; for instance, it is no longer normal, so the characteristic ideals are not defined in a classical way.
We obtain equivariant versions of the results of \cite{BCG+} and \cite{BCG+b} by studying arithmetic complexes whose cohomology groups know the Iwasawa modules.
Our method also gives refined results even in the non-equivariant setting.
For example, a main result of \cite{BCG+b} (which we recall in Theorem \ref{thm:81}) is formulated using the second Chern classes of modules, so it studies the codimension two behavior only.
We generalize this theorem in a form without localization at height two prime ideals, so it also knows higher codimension behavior.

This paper has two kinds of main results, which respectively generalize main results of \cite{BCG+b} and \cite{BCG+}.
In \S \ref{subsec:24} and \S \ref{subsec:15}, we illustrate those main results.
In \S \ref{subsec:49}, we explain the central idea of this paper to prove them.

\subsection{The first main theorem}\label{subsec:24}

We state the first main theorem in a special case; the general statement is Theorem \ref{thm:103}.
In order not to impair the readability, for now we omit to introduce all the notations that are necessary for the precise statement and instead refer to later sections for them.

Let $p$ be a fixed odd prime number.
Let $E$ be a CM-field.
We write $S_p(E)$ (resp.~$S_{\infty}(E)$) for the set of $p$-adic primes (resp.~infinite places) of $E$.
A subset $\cS \subset S_p(E)$ is called a ($p$-adic) CM-type if $S_p(E)$ is the disjoint union of $\cS$ and $\ol{\cS}$, where $\ol{\cS}$ denotes the set of the complex conjugates of primes in $\cS$.
We assume that there exists a CM-type for $E$, namely, that $E$ satisfies the $p$-ordinary condition.

We write $\tilde{E}$ for the compositum of all $\Z_p$-extensions of $E$.
We consider an abelian extension $K/E$ such that $K$ contains $\tilde{E}(\mu_p)$ and $K/\tilde{E}$ is a finite extension.
Here, $\mu_p$ denotes the group of $p$-th roots of unity.
A key point is that we deal with the ``equivariant'' case, namely, we allow the $p \mid [K: \tilde{E}]$ case.
The articles \cite{BCG+} and \cite{BCG+b} deal with the $p \nmid [K: \tilde{E}]$ case only.

For a finite set $S$ of finite primes of $E$, let $\XX_{S}(K)$ denote the $S$-ramified Iwasawa module for $K$, which is by definition the Galois group of the maximal abelian $p$-extension of $K$ that is unramified at all primes not lying above $S$.
We also put $X(K) = X_{\emptyset}(K)$.
It is known that $X_S(K)$ is a finitely generated module over the Iwasawa algebra $\RR = \Z_p[[\Gal(K/E)]]$.
It is expected that $X_{\cS}(K)$ is a torsion $\RR$-module (i.e., annihilated by a non-zero-divisor) if $\cS$ is a CM-type (see Assumption \ref{ass:41} and Remark \ref{rem:92}).
In this introduction, we assume this property.

Let $\Sigma$ be a finite set of places of $E$ that contains $S_p(E) \cup S_{\infty}(E)$.
We also impose an additional condition labeled as \eqref{eq:60}.
Note that if $p \nmid [K: \tilde{E}]$, this condition is trivial so we may take $\Sigma = S_p(E) \cup S_{\infty}(E)$.
We write $\Sigma_f$ for the set of finite primes in $\Sigma$.

Let $\cS$ be a CM-type.
Then we have a natural exact sequence of $\RR$-modules
\[
D_{\cS}(K) \oplus D_{\ol{\cS}}(K) \to \XX_{\Sigma_f}(K) \to \XX_{\Sigma_f \setminus S_p(E)}(K) \to 0.
\]
Here, the modules $D_{\cS}(K)$ and $D_{\ol{\cS}}(K)$ are defined using local information (see \S \ref{subsec:60}).
This sequence implies that the first map offers information on the Iwasawa module $\XX_{\Sigma_f \setminus S_p(E)}(K)$.
We can check that (Lemma \ref{lem:rank}) the generic ranks over $\RR$ of $D_{\cS}(K)$, $D_{\ol{\cS}}(K)$, and $\XX_{\Sigma_f}(K)$ are all $d = [E: \Q]/2$.
Then, inspired by \cite{BCG+b}, 
we take the $d$-th exterior powers, which gives rise to a map
\[
\bigwedge_{\RR}^d D_{\cS}(K) \oplus \bigwedge_{\RR}^d D_{\ol{\cS}}(K) 
\to \bigwedge_{\RR}^d \XX_{\Sigma_f}(K).
\]
The first main theorem studies the cokernel of this map or, more accurately, the cokernel after taking the quotient of the target module modulo its torsion part, which is denoted by $(-)_{/\tor}$.
We will introduce the other notations used in the statement afterward.

\begin{thm}\label{thm:102}
We have an exact sequence
\[
0 \to \left(\frac{\left(\bigwedge_{\RR}^d \XX_{\Sigma_f}(K) \right)_{/\tor}}
{\bigwedge_{\RR}^d D_{\cS}(K) + \bigwedge_{\RR}^d D_{\ol{\cS}}(K) }\right)_{\qu}
\to \frac{\RR_{\qu}}{(\LL^{\alg}_{\Sigma, \Sigma_f \setminus \cS}, \LL^{\alg}_{\Sigma, \Sigma_f \setminus \ol{\cS}})}
\to \frac{\RR_{\qu}}{\Fitt_{\RR_{\qu}}(H^2_{\Iw}(K_{\Sigma}/K, \Z_p)^{\iota}_{\qu})}
\to 0
\]
for each prime ideal $\qu$ of $\Lambda$ which is not in the support of 
\[
\bigoplus_{\pe \in S_p(E)} Z_{\pe}(K) \oplus \bigoplus_{\pe \in S_p(E)} Z_{\pe}(K)^{\iota}(1).
\]
\end{thm}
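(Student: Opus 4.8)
The plan is to derive the exact sequence from a single diagram of perfect complexes over $\RR$, by first localizing at $\qu$ and then applying the exterior-power and determinant formalism for perfect complexes developed in this paper (whose central idea is explained in \S\,\ref{subsec:49}). The input is the arithmetic Iwasawa complex $\RG_{\Iw}(K_{\Sigma}/K, \Z_p)$, which is perfect over $\RR$: its cohomology in degrees one and two recovers, after the standard duality and Iwasawa-adjoint identifications, a module closely related to $\XX_{\Sigma_f}(K)$ and the module $H^2_{\Iw}(K_{\Sigma}/K, \Z_p)$ of the statement. Comparing this complex with the analogous one for $K_{\Sigma \setminus S_p(E)}$ via the localization exact triangle at the primes of $S_p(E) = \cS \sqcup \ol{\cS}$ produces, on passing to cohomology, the exact sequence $D_{\cS}(K) \oplus D_{\ol{\cS}}(K) \to \XX_{\Sigma_f}(K) \to \XX_{\Sigma_f \setminus S_p(E)}(K) \to 0$ recalled above, the terms $D_{\cS}(K)$ and $D_{\ol{\cS}}(K)$ being the local contributions at $p$ and the modules $Z_{\pe}(K)$ measuring their failure to be free. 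By Lemma \ref{lem:rank}, $D_{\cS}(K)$, $D_{\ol{\cS}}(K)$ and $\XX_{\Sigma_f}(K)$ all have generic rank $d$ over $\RR$, so after applying $\bigwedge_{\RR}^d$ the resulting objects have generic rank one and admit determinants.

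Next I would localize the whole diagram at the prime $\qu$ of $\Lambda$, working over $\RR_{\qu} := \RR \otimes_{\Lambda} \Lambda_{\qu}$. The hypothesis that $\qu$ avoids the support of $\bigoplus_{\pe \in S_p(E)} Z_{\pe}(K) \oplus \bigoplus_{\pe \in S_p(E)} Z_{\pe}(K)^{\iota}(1)$ is precisely what forces the local contributions at $p$ to degenerate after localization: each local complex becomes quasi-isomorphic to one of the expected shape, the modules $\bigwedge_{\RR}^d D_{\cS}(K)$, $\bigwedge_{\RR}^d D_{\ol{\cS}}(K)$ and $\bigl( \bigwedge_{\RR}^d \XX_{\Sigma_f}(K) \bigr)_{/\tor}$ become invertible over $\RR_{\qu}$, and $\RG_{\Iw}(K_{\Sigma}/K, \Z_p)_{\qu}$ is represented by a short complex of finite free $\RR_{\qu}$-modules with $H^2_{\Iw}(K_{\Sigma}/K, \Z_p)^{\iota}_{\qu}$ of projective dimension at most one. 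At this point the general theory supplies the identifications: by their definition in the earlier section, $\LL^{\alg}_{\Sigma, \Sigma_f \setminus \cS}$ and $\LL^{\alg}_{\Sigma, \Sigma_f \setminus \ol{\cS}}$ are the generators attached to the determinants of the corresponding localized complexes with the $\cS$-, resp.\ $\ol{\cS}$-, local data removed, while $\Fitt_{\RR_{\qu}}(H^2_{\Iw}(K_{\Sigma}/K, \Z_p)^{\iota}_{\qu})$ is the determinant ideal of the short model of $\RG_{\Iw}(K_{\Sigma}/K, \Z_p)_{\qu}$.

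Then it is a matter of bookkeeping inside $\RR_{\qu}$. Identify $\bigl( \bigwedge_{\RR}^d \XX_{\Sigma_f}(K) \bigr)_{/\tor, \qu}$ with the determinant ideal just described; the exterior power of the fundamental diagram then shows that $\bigwedge_{\RR}^d D_{\cS}(K)_{\qu}$ and $\bigwedge_{\RR}^d D_{\ol{\cS}}(K)_{\qu}$ map onto the subideals generated by $\LL^{\alg}_{\Sigma, \Sigma_f \setminus \cS}$ and $\LL^{\alg}_{\Sigma, \Sigma_f \setminus \ol{\cS}}$ respectively, whereas the quotient of $\RR_{\qu}$ by $\bigl( \bigwedge_{\RR}^d \XX_{\Sigma_f}(K) \bigr)_{/\tor, \qu}$ is $\RR_{\qu} / \Fitt_{\RR_{\qu}}(H^2_{\Iw}(K_{\Sigma}/K, \Z_p)^{\iota}_{\qu})$. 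The asserted three-term exact sequence is then the standard one associated with the chain of inclusions $\bigwedge_{\RR}^d D_{\cS}(K)_{\qu} + \bigwedge_{\RR}^d D_{\ol{\cS}}(K)_{\qu} \subseteq \bigl( \bigwedge_{\RR}^d \XX_{\Sigma_f}(K) \bigr)_{/\tor, \qu} \subseteq \RR_{\qu}$, its left-hand term being the first successive quotient, its middle term $\RR_{\qu}/(\LL^{\alg}_{\Sigma, \Sigma_f \setminus \cS}, \LL^{\alg}_{\Sigma, \Sigma_f \setminus \ol{\cS}})$, and its right-hand term $\RR_{\qu}/\Fitt_{\RR_{\qu}}(\cdots)$.

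The step I expect to be the main obstacle is the localization analysis: verifying that the $Z_{\pe}$-support hypothesis is exactly the condition making the local complexes at $p$ degenerate, and then tracking through the determinant formalism that the multipliers produced are precisely $\LL^{\alg}_{\Sigma, \Sigma_f \setminus \cS}$ and $\LL^{\alg}_{\Sigma, \Sigma_f \setminus \ol{\cS}}$ rather than merely generators of the same codimension-one data. A further delicate point is the compatibility of $(-)_{/\tor}$ with $\bigwedge_{\RR}^d$ and with localization, which is what legitimizes treating $\bigl( \bigwedge_{\RR}^d \XX_{\Sigma_f}(K) \bigr)_{/\tor, \qu}$ as an honest invertible ideal and identifies the left-hand term of the sequence with the genuine cokernel of the exterior-power map; this is where the general algebraic results of the paper do the real work.
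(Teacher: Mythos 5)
Your proposal is correct and takes essentially the same route as the paper: the commutative diagram of $\Psi$-maps from Proposition~\ref{prop:30} relating the exterior powers to the determinant modules of $C_{\cS}^{\loc}$, $C_{\ol{\cS}}^{\loc}$, $C_{\Sigma,\Sigma_f}$ (this is Proposition~\ref{prop:46}), the cokernel computations via the duality theorems (Propositions~\ref{prop:47} and \ref{prop:45}), and the degeneration of the local $\Psi$-maps into isomorphisms under the $Z_{\pe}$-vanishing hypothesis. The only differences are cosmetic --- the paper applies the snake lemma before localizing at $\qu$, whereas you localize first and read off the conclusion as the sequence attached to a chain of inclusions in $\RR_{\qu}$ --- and your claim that $\left(\bigwedge_{\RR}^d \XX_{\Sigma_f}(K)\right)_{/\tor,\qu}$ becomes an \emph{invertible} ideal slightly overstates what is established or needed: injectivity of $\Psi$ into $\Det_{\RR}(C_{\Sigma,\Sigma_f})_{\qu}$ together with the identification of its cokernel suffices for the bookkeeping.
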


We briefly explain the notations (see also \S \ref{subsec:130}):

\begin{itemize}
\item
$\Lambda$ is an auxiliary subalgebra of $\RR$ (see \S \ref{subsec:131}), which is introduced only to suitably formulate ``avoiding the supports of $Z_{\fp}(K)$ and $Z_{\fp}(K)^{\iota}(1)$.''

\item
$Z_{\fp}(K)$ is an $\RR$-module which is isomorphic to $\Z_p[[\Gal(K/E)/G_{\fp}(K/E)]]$, where $G_{\fp}(K/E)$ denotes the decomposition group (see \S \ref{subsec:60}).

\item
$H^2_{\Iw}(K_{\Sigma}/K, \Z_p)$ is the Iwasawa cohomology group, which is closely related to $X(K)$ (see \S \ref{subsec:60}).

\item
For a subset $S \subset \Sigma_f$ such that $S \cap S_p(E)$ is a CM-type, $\LL_{\Sigma, S}^{\alg}$ denotes the algebraic $p$-adic $L$-function (see \S \ref{sec:23}).
Note that in this paper we do not study a main conjecture (i.e., a relation with analytic aspects), but see the final paragraph of \S \ref{sec:23}.
\end{itemize}

In \S \ref{subsec:132}, we will explain how to recover a main theorem of \cite{BCG+b} from our theorem.
A main tool is the additivity of the (second) Chern classes with respect to exact sequences, though we will also need an additional algebraic proposition proved in \S \ref{sec:87}.
Let us also stress that our main theorem does not require localization at height two prime ideals, which is an improvement on \cite{BCG+b}.

It is natural to ask if the exterior powers in Theorem \ref{thm:102} have arithmetic interpretations.
In \cite[Theorem D]{BCG+b}, an answer is given when $d = 2$ (more generally when $n = 2$ and $l = 2$ in the notation of Theorem \ref{thm:103}).
This answer is actually also valid in our equivariant situation without changing the discussion, so we omit the details.

\subsection{The second main theorem}\label{subsec:15}

Contrary to the first main theorem, our second main theorem (Theorem \ref{thm:105}) describes, under more restrictions (including $d = 1$), unramified Iwasawa modules themselves without avoiding any prime ideal $\qu$.

\begin{thm}\label{thm:101}
Suppose that $E$ is an imaginary quadratic field (i.e., $d = 1$).
Let $S_p(E) = \{\pe, \ol{\pe}\}$.
Suppose that we may take $\Sigma = S_p(E) \cup S_{\infty}(E)$ (note that this is unfortunately restrictive; see condition \eqref{eq:60}).
Also, suppose that $\XX(K)$ is pseudo-null.
Then we have an exact sequence of $\RR$-modules
\[
\Z_p(1) \to \XX(K) \to \frac{\RR}{(\LL_{\Sigma, \{\pe\}}^{\alg}, \LL_{\Sigma, \{\ol{\pe}\}}^{\alg})} \to E^2(\XX(K))^{\iota}(1) \to 0.
\]
Moreover, the image of the first map from $\Z_p(1)$ is finite.
\end{thm}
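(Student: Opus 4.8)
The plan is to deduce Theorem \ref{thm:101} as the specialization $d = 1$ of the first main theorem (Theorem \ref{thm:102}, or rather its general form Theorem \ref{thm:103}), combined with a direct analysis of what happens at the bad primes $\qu$ that Theorem \ref{thm:102} is forced to avoid. When $d = 1$ the exterior power operations $\bigwedge^1_{\RR}$ are trivial, so the numerator $\bigl(\bigwedge^1_{\RR}\XX_{\Sigma_f}(K)\bigr)_{/\tor}$ is just $\XX_{\Sigma_f}(K)_{/\tor} = \XX_{S_p(E)}(K)_{/\tor}$ (using $\Sigma = S_p(E)\cup S_\infty(E)$), and $\bigwedge^1_{\RR}D_{\cS}(K) + \bigwedge^1_{\RR}D_{\ol{\cS}}(K)$ is the image of $D_{\pe}(K)\oplus D_{\ol{\pe}}(K)$ inside it. First I would invoke the natural exact sequence
\[
D_{\pe}(K)\oplus D_{\ol{\pe}}(K) \to \XX_{S_p(E)}(K) \to \XX_{\emptyset}(K) = \XX(K) \to 0
\]
recalled in \S \ref{subsec:24}, together with the hypothesis that $\XX(K)$ is pseudo-null, to identify the relevant quotient: since $\XX(K)$ is pseudo-null it is torsion, so passing to $(-)_{/\tor}$ kills it and the quotient of $\XX_{S_p(E)}(K)_{/\tor}$ by the image of $D_{\pe}(K)\oplus D_{\ol{\pe}}(K)$ becomes, up to pseudo-null error, $\XX(K)$ itself. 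More precisely I would track the pseudo-null discrepancy and show it is controlled by the images of $Z_{\pe}(K)$ and $Z_{\ol{\pe}}(K)$.

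Next I would apply Theorem \ref{thm:102} for every prime $\qu$ of $\Lambda = \RR$ (in the imaginary quadratic case $\Lambda$ can be taken to be $\RR$, or at least the localizations agree away from the bad locus) of height $\le 1$, giving that away from the support $T$ of $\bigoplus_{\pe}Z_{\pe}(K)\oplus\bigoplus_{\pe}Z_{\pe}(K)^{\iota}(1)$ we have an exact sequence
\[
0 \to \XX(K)_{\qu} \to \frac{\RR_{\qu}}{(\LL^{\alg}_{\Sigma,\{\pe\}},\LL^{\alg}_{\Sigma,\{\ol\pe\}})} \to \frac{\RR_{\qu}}{\Fitt_{\RR_{\qu}}(H^2_{\Iw}(K_\Sigma/K,\Z_p)^{\iota}_{\qu})} \to 0.
\]
Then I would globalize: the two outer terms and the middle term are the localizations of honestly-defined $\RR$-modules, so if the sequence of $\RR$-modules
\[
\XX(K) \to \frac{\RR}{(\LL^{\alg}_{\Sigma,\{\pe\}},\LL^{\alg}_{\Sigma,\{\ol\pe\}})} \to E^2(\XX(K))^{\iota}(1) \to 0
\]
is a complex whose homology is supported on $T$, then the localization statements of Theorem \ref{thm:102} force that homology to vanish outside $T$; the remaining task is to pin it down on $T$. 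For the last term I would use the standard identification of $\Fitt_{\RR}(H^2_{\Iw}(K_\Sigma/K,\Z_p)^{\iota}_{\qu})$ with (the $\qu$-localization of) the second syzygy / $E^2$ of $\XX(K)$ twisted by $(1)$, which holds because $H^2_{\Iw}$ sits in the complex $\RG_{\Iw}(K_\Sigma/K,\Z_p)$ whose cohomology in degrees $1,2$ encodes $\XX(K)$ and its reflexive closure — this is exactly where the Tate twist and the $\iota$ come from — and because over the localizations $\RR_{\qu}$ at height $\le 1$ the relevant complex is perfect of the right shape so that $\Fitt = (E^2)$ up to the established identifications.

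The main obstacle, and the part requiring real work, will be the behavior at the bad primes $\qu \in T$, i.e.\ at primes lying in the support of $Z_{\pe}(K)$ or $Z_{\ol\pe}(K)^{\iota}(1)$: there the localization form of Theorem \ref{thm:102} gives no information, yet the second main theorem asserts exactness on the nose, with only a finite (and explicitly $\Z_p(1)$-bounded) failure of left-exactness. To handle this I would go back to the arithmetic complex $\RG_{\Iw}(K_\Sigma/K,\Z_p)$ and the local complexes computing $D_{\pe}(K)$, whose cohomology is $Z_{\pe}(K)$ in one degree; the point is that $Z_{\pe}(K)\cong\Z_p[[\Gal(K/E)/G_{\pe}(K/E)]]$ is itself a quotient of $\RR$ with well-understood (in particular, at most one-dimensional) support, so the ``$\Tor$/homology'' contributions it produces in the exterior-power (here, identity) map are quotients and sub-quotients of $\Z_p[[\Gal(K/E)/G_{\pe}]]$ and its twist; intersecting $\pe$ and $\ol\pe$ contributions, and using that $G_{\pe}(K/E)$ and $G_{\ol\pe}(K/E)$ together generate a finite-index subgroup, collapses the surviving piece to something whose underlying $\Z_p$-module is finite, and the natural map out of $\Z_p(1)$ realizes precisely this piece (the $(1)$-twist being forced by the local Tate duality used to identify the $D_{\pe}$'s). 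I expect the first-map-image-is-finite assertion to follow from: its image is a sub-quotient of $Z_{\pe}(K)\cap Z_{\ol\pe}(K)$-type modules after twisting, which is pseudo-null (two distinct primes) hence—since $\XX(K)$ is assumed pseudo-null and everything lives over the two-dimensional-ish ring $\RR$ modulo its pseudo-null part—actually finite over $\Z_p$. Cleanly separating ``pseudo-null'' from ``finite'' here, and checking that no infinite pseudo-null (but non-finite) module sneaks into the kernel, is the delicate point, and I would do it by an explicit $\RR/(\text{height-one prime})$-by-$\RR/(\text{height-one prime})$ intersection argument using the codimension-two vanishing already available from the Chern class computations of \cite{BCG+b} as a consistency check.
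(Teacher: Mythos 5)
Your proposal takes a genuinely different route from the paper, but it has a real gap at exactly the point you flag as delicate, and the paper's own structure tells you it is not an accident that you got stuck there.

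Your plan is to deduce Theorem~\ref{thm:101} from the first main theorem (Theorem~\ref{thm:102}/\ref{thm:103}) by localizing at all primes $\qu$ away from the bad locus $T = \mathrm{Supp}\bigl(\bigoplus_\pe Z_\pe(K) \oplus \bigoplus_\pe Z_\pe(K)^\iota(1)\bigr)$ and then ``globalizing,'' treating the bad primes by a hand-built intersection argument. The paper does \emph{not} proceed this way, and the introduction says so explicitly: ``The proof of Theorem \ref{thm:101}, however, requires reflexive hulls.'' Theorem~\ref{thm:101} is proved as the $d=1$, $\psi$-component case of Theorem~\ref{thm:105}, which is built on an entirely different commutative diagram: the rows come from the duality spectral-sequence exact sequences of Proposition~\ref{prop:20} (for $D_\pe(K) \to D_\pe(K)^{**}$ and $H^2(C_{\Sigma, S_p(E)}) \to H^2(C_{\Sigma, S_p(E)})^{**}$), and the extra algebraic input that makes everything work globally, without any localization, is Proposition~\ref{prop:43}: under the pseudo-nullity hypothesis (through Proposition~\ref{prop:22}) the exterior power bidual $\bigcap^1_\RR(-)$ of the relevant modules is \emph{free of rank one}, isomorphic to the determinant module. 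This is the reason the proof needs the hypothesis that $\XX(K)$ is pseudo-null in the first place; in your sketch the pseudo-nullity is only used to pass to $(-)_{/\tor}$, which is not where its force lies. Once that identification is in hand, the snake lemma applied to the diagram of Proposition~\ref{prop:46} (reinterpreted via Propositions~\ref{prop:74} and~\ref{prop:20}) produces a global exact sequence; the $\Z_p(1)$ term appears directly as $\Ker(f_3) \simeq E^2(\Z_p)^{\omega\psi^{-1}}(1) \simeq \Z_p(1)^\psi$, and the finiteness of its image uses Proposition~\ref{prop:a18}, not a support/intersection argument.

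The concrete gap in your argument: the entire content of Theorem~\ref{thm:101}, compared to Theorem~\ref{thm:102}, is precisely the exactness \emph{at} the bad primes $\qu \in T$, and there your method does not deliver. Localizing Theorem~\ref{thm:102} at the good primes tells you nothing at the bad ones (where, moreover, the condition $\pd_{\RR_\qu}(Z_\pe(K)_\qu) \leq 2$ can actually fail — see Proposition~\ref{prop:80'} — so Theorem~\ref{thm:103} literally gives no output). Your intersection argument (``intersecting $\pe$ and $\ol{\pe}$ contributions, ... collapses the surviving piece to something finite'') is a heuristic, not a proof; you yourself call separating ``pseudo-null'' from ``finite'' the delicate point and only offer the Chern-class computation as a ``consistency check.'' That is precisely the step the paper handles by a different tool (the $\bigcap^1_\RR \simeq \Det^{-1}_\RR$ identification), and there is no indication that the patchwork approach can be made rigorous. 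There is a further, smaller, issue: your identification of the third term $\RR_\qu / \Fitt_{\RR_\qu}(H^2_{\Iw}(K_\Sigma/K,\Z_p)^\iota_\qu)$ with $E^2(\XX(K))^\iota(1)_\qu$ is asserted but not established; in the paper this comes out of the four-term duality sequence of Proposition~\ref{prop:20}(1) specialized to $\VV = \Sigma_f = S_p(E)$, $\UU^c = \emptyset$, together with the pseudo-nullity of $\XX(K)$, and requires tracking the $W_{\emptyset} = \Z_p$ term carefully — it is not simply ``$\Fitt$ equals $E^2$ in the perfect range.''
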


Here, we put $E^2(-) = \Ext^2_{\RR}(-, \RR)$ (see \S \ref{subsec:130} for the convention on $\RR$-module structure).

When $p \nmid [K: \tilde{E}]$, the character-component of Theorem \ref{thm:101} recovers \cite[Theorem 5.2.1]{BCG+}.
We also have the following corollary of Theorem \ref{thm:101} (see Corollary \ref{cor:73} for a generalization).
Let $\XX(K)_{\fin}$ denote the maximal finite submodule of $\XX(K)$.

\begin{cor}\label{cor:72}
In the situation of Theorem \ref{thm:101}, 
$\XX(K)_{\fin}$ is a quotient of $\Z_p(1)$ as an $\RR$-module.
\end{cor}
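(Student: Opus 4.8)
The plan is to deduce this directly from the exact sequence in Theorem \ref{thm:101}. The key observation is that the three modules surrounding $\XX(K)$ in that sequence have no nonzero finite submodules, except possibly coming from the image of $\Z_p(1)$. First I would split the four-term exact sequence of Theorem \ref{thm:101} into a short exact sequence
\[
0 \to C \to \XX(K) \to \frac{\RR}{(\LL_{\Sigma,\{\pe\}}^{\alg},\LL_{\Sigma,\{\ol{\pe}\}}^{\alg})} \to E^2(\XX(K))^{\iota}(1) \to 0,
\]
where $C$ denotes the image of the map from $\Z_p(1)$, so that $C$ is a finite cyclic $\RR$-module which is a quotient of $\Z_p(1)$ by the last sentence of Theorem \ref{thm:101}. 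It then suffices to show that $\XX(K)_{\fin} \subseteq C$, i.e. that the image of $\XX(K)_{\fin}$ in $\RR/(\LL_{\Sigma,\{\pe\}}^{\alg},\LL_{\Sigma,\{\ol{\pe}\}}^{\alg})$ vanishes.

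The main step is therefore to show that $\RR/(\LL_{\Sigma,\{\pe\}}^{\alg},\LL_{\Sigma,\{\ol{\pe}\}}^{\alg})$ has no nonzero finite $\RR$-submodule. Since $\XX(K)$ is pseudo-null of the appropriate codimension, the two algebraic $p$-adic $L$-functions form a regular sequence in $\RR$ (this should follow from the rank computation and the pseudo-nullity hypothesis, together with the exact sequence defining the $\LL^{\alg}$ via Fitting ideals in \S\ref{sec:23}); hence $\RR/(\LL_{\Sigma,\{\pe\}}^{\alg},\LL_{\Sigma,\{\ol{\pe}\}}^{\alg})$ is a Cohen--Macaulay module of codimension two over the regular (or Cohen--Macaulay) ring $\RR$, and in particular it is unmixed of codimension two. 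Over the four-dimensional ring $\RR$ a module of codimension two has depth at least... no: the point is simply that an associated prime of $\RR/(\LL^{\alg}_\bullet)$ has height exactly two, so in particular cannot be the maximal ideal, which forces the finite-submodule part to be zero. Dually, one may instead argue that $E^0(\RR/(\LL^{\alg}_\bullet)) = 0$ and $E^1 = 0$ because the defining sequence is regular, and that a finite submodule would contribute to $E^{\dim \RR}$; the cleanest route is to note that $\RR/(\LL^{\alg}_\bullet)$ embeds into its reflexive hull up to pseudo-null error in codimension $\le 1$, and finite submodules are detected in codimension $=\dim\RR$.

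Granting that $\RR/(\LL_{\Sigma,\{\pe\}}^{\alg},\LL_{\Sigma,\{\ol{\pe}\}}^{\alg})$ has no nonzero finite submodule, the composite $\XX(K)_{\fin} \hookrightarrow \XX(K) \to \RR/(\LL^{\alg}_\bullet)$ has image a finite submodule, hence is zero, so $\XX(K)_{\fin} \subseteq C$. Since $C$ is a quotient of $\Z_p(1)$ and $\XX(K)_{\fin}$ is an $\RR$-submodule of $C$, and $\Z_p(1)$ is a rank-one $\OO$-module over the quotient of $\RR$ on which $\Gal(\ol{\Q}/K)$ acts through the cyclotomic character (so all its nonzero submodules are again cyclic of the form $p^n\Z_p(1)$), we conclude that $\XX(K)_{\fin}$ is itself a quotient of $\Z_p(1)$ as an $\RR$-module, as claimed. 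The main obstacle I anticipate is the no-finite-submodule claim for $\RR/(\LL^{\alg}_\bullet)$: one has to make precise that the two algebraic $p$-adic $L$-functions are genuinely a regular sequence, which relies on the torsion hypothesis on $\XX_{\cS}(K)$ and the pseudo-nullity of $\XX(K)$ to control codimensions via the Fitting-ideal description of the $\LL^{\alg}$; everything else is formal manipulation of the exact sequence.
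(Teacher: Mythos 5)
Your proof is correct and takes essentially the same route as the paper. The paper proves this corollary inside the proof of Corollary \ref{cor:73}: it quotes Theorem \ref{thm:105} together with an algebraic lemma (\cite[Lemma A.3]{BCG+}) stating precisely that $\bigl(\RR^{\psi}/(\LL^{\alg,\psi}_{\Sigma,\VV\setminus\cS_1^c},\LL^{\alg,\psi}_{\Sigma,\VV\setminus\cS_2^c})\bigr)_{\fin}=0$. You re-derive that lemma instead of citing it: the pseudo-nullity of $\XX(K)$ propagates (via the exact sequence of Theorem \ref{thm:101}) to $\RR/(\LL^{\alg}_{\Sigma,\{\pe\}},\LL^{\alg}_{\Sigma,\{\ol\pe\}})$, forcing it to have codimension exactly two; since $\RR$ is Gorenstein (hence Cohen--Macaulay), the two $L$-functions then form a regular sequence, the quotient is Cohen--Macaulay of dimension one, and therefore the maximal ideal is not an associated prime, which gives the no-finite-submodule claim. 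That is a perfectly valid unpacking of the black box, so the proofs match in substance.

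Two minor remarks. First, $\RR$ here is three-dimensional, not four-dimensional as you wrote mid-paragraph (since $\Gal(K/E)$ contains $\Z_p^2$ as an open subgroup when $E$ is imaginary quadratic); your fallback argument via associated primes is the right one and does not depend on this slip. Second, once you know the image of $\XX(K)_{\fin}$ in $\RR/(\LL^{\alg}_{\Sigma,\{\pe\}},\LL^{\alg}_{\Sigma,\{\ol\pe\}})$ vanishes, you in fact get the equality $\XX(K)_{\fin}=C$ (both containments hold since $C$ is a finite submodule of $\XX(K)$), so the final step about submodules of quotients of $\Z_p(1)$, while correct, is not needed.
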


\subsection{The key idea of the proof}\label{subsec:49}

We outline the proofs of the main theorems, putting focus on difference from \cite{BCG+}, \cite{BCG+b}.

\subsubsection{Review of methods of \cite{BCG+} and \cite{BCG+b}}

First we explain a key idea of \cite{BCG+} and \cite{BCG+b}.
We assume $p \nmid [K: \tilde{E}]$ and $\Sigma_f = S_p(E)$.
In this case, $\RR$ is a finite product of regular local rings.

The following algebraic proposition is a key observation.
Recall that a finitely generated module $M$ over a regular local ring is said to be reflexive if the canonical homomorphism $\alpha_M: M \to M^{**}$ is isomorphic, where in general $(-)^*$ denotes the linear dual.
It is known that $M^{**}$ is automatically reflexive, and $M^{**}$ is called the reflexive hull of $M$.

\begin{prop}\label{prop:51}
Let $A$ be a regular local ring and $M$ a reflexive $A$-module.
\begin{itemize}
\item[(1)] If $\dim(A) \leq 2$, then $M$ is free.
\item[(2)] (\cite[Lemma A.1]{BCG+}). If the generic rank of $M$ over $A$ is one, then $M$ is free.
\end{itemize}
\end{prop}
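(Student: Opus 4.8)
The plan is to prove (1) by inspecting the three cases $\dim(A)\in\{0,1,2\}$, and to deduce (2), the statement of \cite[Lemma A.1]{BCG+}, from the fact that a regular local ring is a unique factorization domain. In either part one uses at the outset that a reflexive module is torsion-free: over the domain $A$ the torsion submodule of $M$ is contained in $\Ker(\alpha_M)$, which vanishes when $\alpha_M$ is an isomorphism.

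For part (1), if $\dim(A)=0$ then $A$ is a field and the claim is trivial, while if $\dim(A)=1$ then $A$ is a discrete valuation ring and the torsion-free finitely generated module $M$ is free. So the point is the case $\dim(A)=2$, which I would handle via depth. Since $M$ is reflexive it is the $A$-dual $M\cong (M^*)^*$ of the finitely generated module $N:=M^*$; picking a finite presentation $A^{b}\to A^{a}\to N\to 0$ and applying $\Hom_A(-,A)$ produces an exact sequence $0\to M\to A^{a}\to C\to 0$ with $C$ a submodule of $A^{b}$. If $C=0$ then $M\cong A^{a}$ is free; otherwise $C$ is torsion-free and nonzero, so $\depth_A C\geq 1$, and the depth inequality for the displayed short exact sequence gives $\depth_A M\geq \min(\depth_A A,\ \depth_A C+1)\geq \min(2,2)=2$, using that the regular local ring $A$ is Cohen--Macaulay of dimension $2$. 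Since also $\depth_A M\leq \dim(A)=2$, we get $\depth_A M=\depth_A A$, and the Auslander--Buchsbaum formula forces $\pd_A M=0$; as $A$ is local and $M$ finitely generated, $M$ is free. (Equivalently: a reflexive module over a regular local ring is a second syzygy, and over a regular local ring of dimension $\leq 2$ every second syzygy is free.)

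For part (2), since $M$ has generic rank one I would fix an isomorphism $M\otimes_A\Frac(A)\cong\Frac(A)$; using torsion-freeness this identifies $M$ with a nonzero fractional ideal $J\subseteq\Frac(A)$, and reflexivity of $M$ says precisely that $J$ is divisorial, $J=\bigcap_{\height(\fp)=1}J_{\fp}$. Thus $M$ represents a class in the divisor class group $\mathrm{Cl}(A)$; but a regular local ring is a unique factorization domain (Auslander--Buchsbaum), so $\mathrm{Cl}(A)=0$, $J$ is principal, and $M\cong A$ is free. (One could instead induct on $\dim(A)$, localizing at height-one primes and invoking the discrete-valuation-ring case, but the class-group argument is cleaner.)

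I expect the main obstacle to be the depth estimate in the $\dim(A)=2$ case --- that a reflexive (hence dual) finitely generated module over a regular local ring has depth at least $\min(2,\depth_A A)$ at the maximal ideal. Once this input is in place, the rest is standard: the structure of finitely generated modules over discrete valuation rings, the Auslander--Buchsbaum formula, and the classical dictionary between reflexive rank-one modules and the divisor class group.
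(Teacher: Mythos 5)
The paper does not prove Proposition~\ref{prop:51}: part (2) is attributed to \cite[Lemma A.1]{BCG+} and part (1) is left as a standard fact, so there is no in-paper argument to compare yours against. Your proof is correct. For (1) the only nontrivial case is $\dim(A)=2$, where you correctly realize the reflexive module $M=(M^*)^*$ as the kernel of a map between finite free modules, so it sits in $0\to M\to A^a\to C\to 0$ with $C$ torsion-free; the depth lemma then gives $\depth_A M\geq\min(2,\depth_A C+1)=2$, and Auslander--Buchsbaum yields $\pd_A M=0$. (Your parenthetical rephrasing via second syzygies is the standard way to package this.) For (2), identifying a finitely generated torsion-free rank-one $M$ with a fractional ideal, noting that reflexivity means the ideal is divisorial, and then invoking the Auslander--Buchsbaum theorem that a regular local ring is a UFD (so $\mathrm{Cl}(A)=0$ and every divisorial ideal is principal) is exactly the usual argument, and is what \cite[Lemma A.1]{BCG+} amounts to. One small stylistic remark: in part (1) you do not need to split off the case $C=0$ separately, since the depth estimate $\depth_A M\geq\min(\depth_A A^a,\,\depth_A C+1)$ already gives the conclusion when $C=0$ as well, reading $\depth_A(0)=+\infty$; but this does not affect correctness.
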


Then a key idea of \cite[\S 5]{BCG+} and \cite[\S 4]{BCG+b} is to replace the arithmetic modules $\XX_{S_p(E)}(K)$ and $D_{\cS}(K)$ by their reflexive hulls.
We consider a natural commutative diagram
\begin{equation}\label{eq:55}
\xymatrix{
	\bigwedge_{\RR}^d D_{\cS}(K) \oplus \bigwedge_{\RR}^d D_{\ol{\cS}}(K) \ar[r] \ar[d]
	& \bigwedge_{\RR}^d D_{\cS}(K)^{**} \oplus \bigwedge_{\RR}^d D_{\ol{\cS}}(K)^{**} \ar[d]\\
	\bigwedge_{\RR}^d \XX_{S_p(E)}(K) \ar[r]
	& \bigwedge_{\RR}^d \XX_{S_p(E)}(K)^{**}.
}
\end{equation}

Concerning Theorem \ref{thm:102}:
After taking $(-)_{/\tor}$ of the lower left module, the cokernel of the left vertical arrow is what we want to understand.
Thanks to Proposition \ref{prop:51}(1), the modules on the right are free after localization at primes of height two.
Hence the right vertical arrow is easy to understand, yielding the middle term of the sequence in Theorem \ref{thm:102} that involves the algebraic $p$-adic $L$-functions.

Another important point is that the cokernel of the maps to the reflexive hulls can be explicitly described via spectral sequences from duality theorems (Proposition \ref{prop:20}).
Then we can also investigate the cokernels of the horizontal arrows in the above diagram.
By the assumption on $\qu$, we can see that the cokernel of the upper arrow vanishes, and that the cokernel of the lower arrow yields the last term of the sequence in Theorem \ref{thm:102} that involves the Iwasawa cohomology group.
Using these facts, we obtain the result by applying snake lemma.

Concerning Theorem \ref{thm:101} (so $d = 1$):
The idea is the same, but we can remove several defects in the statement of Theorem \ref{thm:102}, as follows.
By Proposition \ref{prop:51}(2), we do not have to take localization at height two primes.
It is known that $\XX_{S_p(E)}(K)$ is torsion-free (under our hypothesis that $\XX(K)$ is pseudo-null), so we do not have to take $(-)_{/\tor}$.
Finally, the vertical arrow between the cokernels of the horizontal arrows can be studied directly, so we do not have to avoid $\qu$ as in Theorem \ref{thm:102}.
These facts imply the result.

\subsubsection{Idea of this paper}

A difficulty in generalization to the $p \mid [K: \tilde{E}]$ case is that, contrary to Proposition \ref{prop:51}, reflexive modules cannot be expected to be free over our algebra $\RR$.

Our key idea in this paper is to use, instead of exterior powers of reflexive hulls, determinant modules of perfect complexes, which are free of rank one by definition.
More concretely, in Proposition \ref{prop:30}, we introduce a natural homomorphism
\[
\Psi_C: \bigwedge_{\RR}^d H^1(C) \to \Det_{\RR}^{-1}(C)
\]
for a perfect complex $C$ over $\RR$ that satisfies several conditions.
The kernel of $\Psi_C$ is precisely the torsion part, and a description of the cokernel of $\Psi_C$ is also obtained.
Note that, contrary to the reflexive hull, the cokernel of $\Psi_C$ is not necessarily pseudo-null, but this does not matter for our purpose.

The arithmetic modules $\XX_{\Sigma_f}(K)$ and $D_{\cS}(K)$ have interpretations as the first cohomology of perfect complexes $C_{\Sigma, \Sigma_f}[1]$ and $C_{\cS}^{\loc}$ respectively (\S \ref{subsec:60}), and we can apply the general construction of the maps to the determinant modules.
As a consequence, we obtain a commutative diagram
\begin{equation}\label{eq:56}
\xymatrix{
	\bigwedge_{\RR}^d D_{\cS}(K) \oplus \bigwedge_{\RR}^d D_{\ol{\cS}}(K) \ar[r] \ar[d]
	& \Det_{\RR}^{-1}(C_{\cS}^{\loc}) \oplus \Det_{\RR}^{-1}(C_{\ol{\cS}}^{\loc}) \ar[d]\\
	\bigwedge_{\RR}^d \XX_{\Sigma_f}(K) \ar[r]
	& \Det_{\RR}(C_{\Sigma, \Sigma_f})
}
\end{equation}
(see Proposition \ref{prop:46}).
This diagram plays the same role as \eqref{eq:55}.
The cokernel of the right vertical arrow can be described by the algebraic $p$-adic $L$-functions (this is almost the definition), yielding the middle term of Theorem \ref{thm:102}.
The cokernel of the horizontal arrows can be described by using duality theorems.
By these facts, we obtain Theorem \ref{thm:102} again by the snake lemma.

The proof of Theorem \ref{thm:101}, however, requires reflexive hulls.
A main contribution of this paper is, under the assumption of Theorem \ref{thm:101}, to prove that the reflexive hulls of $\XX_{S_p(E)}(K)$ and $D_{\cS}(K)$ are free of rank one.
In fact, we will show that the reflexive hulls are isomorphic to the determinant modules that appeared in \eqref{eq:56} (recall $d = 1$).
Then the proof proceeds similarly as in the $p \nmid [K: \tilde{E}]$ case.

\subsection{Organization of this paper}\label{subsec:141}

In \S \ref{sec:02}, we will establish the key algebraic tool for the main theorems, as explained in \S \ref{subsec:49}.
In \S \ref{sec:14}, we review several facts on arithmetic complexes, including duality theorems.
In \S \ref{sec:23}, we introduce the algebraic $p$-adic $L$-functions, which are used in the statements of the main theorems.
In \S \S \ref{sec:33}, \ref{sec:11}, we prove the two main theorems.
\S \ref{sec:87} is an appendix on some properties of Fitting ideals that are used in \S \ref{subsec:132}.

\subsection{A list of notations}\label{subsec:130}

For a field $k$, let $\ol{k}$ denote an algebraic closure of $k$.

The superscript $(-)^{\dual}$ denotes the Pontryagin dual for modules which are either compact or discrete.

For a (commutative) ring $A$, let $\dim(A)$ denote the Krull dimension of $A$.
Let $Q(A)$ denote the total ring of fractions of $A$.
For a prime ideal $\fp$ of $A$, let $\height(\fp)$ denote its height.

Let $M$ be a finitely generated module over a noetherian ring $A$.
For an integer $l \geq 0$, let $\bigwedge_{A}^l M$ denote the $l$-th exterior power of $M$.
Let $\pd_A(M)$ denote the projective dimension of $M$.
Let $\Ann_A(M)$ denote the annihilator ideal of $M$.

Let $\Fitt_A(M)$ denote the (initial) Fitting ideal of $M$.
By definition, if $A^a \overset{H}{\to} A^b \to M \to 0$ is a finite presentation of $M$, then $\Fitt_A(M)$ is generated by all $b \times b$ minors of the presentation matrix of $H$.

We put $E^i(M) = \Ext^i_{A}(M, A)$ for integers $i \geq 0$.
In particular, we define the linear dual by $M^* = E^0(M) = \Hom_A(M, A)$.
Then $E^i(M)$ is an $A$-module; when $i = 0$, we define the module structure by $(a\phi)(x) = \phi(a x)$ for $a \in A$, $\phi \in \Hom_{A}(M, A)$, and $x \in M$.
Note that this is the opposite of the convention adopted in \cite{BCG+} and \cite{BCG+b}.

Let $\cG$ be an abelian compact group which contains an open subgroup $\Gamma$ which is (non-canonically) isomorphic to $\Z_p^d$ for some $d \geq 1$.
Let $M$ be a finitely generated module over $\Z_p[[\cG]]$.
We say that $M$ is torsion over $\Z_p[[\cG]]$ if it is annihilated by a non-zero-divisor of $\Z_p[[\cG]]$.
We also say that $M$ is pseudo-null if $M_{\Qu} = 0$ for any prime $\Qu$ of $\Z_p[[\cG]]$ with $\height(\Qu) \leq 1$.
Then being torsion (resp.~pseudo-null) over $\Z_p[[\cG]]$ is equivalent to being torsion (resp.~pseudo-null) over $\Z_p[[\Gamma]]$.
Therefore, we have no afraid of confusion about the base ring for these properties.
Let $M_{\tor}$ (resp.~$M_{\PN}$, resp.~$M_{\fin}$) denote the maximal torsion (resp.~pseudo-null, resp.~finite) submodule of $M$.
We also put $M_{/\bullet} = M/M_{\bullet}$ for $\bullet \in \{\tor, \PN, \fin\}$.

\section{The key algebraic propositions}\label{sec:02}

As discussed in \S \ref{subsec:49}, this section constitutes the technical heart of this paper.

\subsection{Preliminaries on perfect complexes}\label{subsec:a21}

For a noetherian ring $A$, let $\DeP(A)$ be the derived category of perfect complexes over $A$.
For integers $a \leq b$, let $\De^{[a, b]}(A)$ be the full subcategory of $\DeP(A)$ that consists of complexes 
which are quasi-isomorphic to complexes of the form
\[
[\cdots \to 0 \to C^a \to \dots \to C^b \to 0 \to \cdots],
\]
concentrated in degrees $a, a+1, \dots, b$, where $C^i$ is finitely generated and projective for each $a \leq i \leq b$.
For each $C \in \DeP(A)$ which is quasi-isomorphic to the displayed complex, we define the Euler characteristic of $C$ by
\[
\chi_{A}(C) = \sum_i (-1)^{i-1} \rank_A(C^i).
\]
Here, $\rank_A(-)$ denotes the (locally constant) rank for projective modules.
The determinant module of $C$ is defined by
\[
\Det_{A}(C) = \bigotimes_i \ddet_{A}^{(-1)^i}(C^i),
\]
where, for a finitely generated projective $A$-module $F$, we put
\[
\ddet_{A}(F) = \bigwedge_{A}^{\rank_A(F)} F,
\qquad \ddet_{A}^{-1}(F) = \ddet_{A}(F)^*
\]
(recall that $(-)^*$ denotes the linear dual).
We also put
\[
\Det_{A}^{-1}(C) =  \Det_{A}(C)^*.
\]
Since no problem occurs, throughout this paper, we ignore the degrees on determinant modules.

Given an $A$-algebra $A'$, for $C \in D^{[a, b]}(A)$, let $A' \otimesL_A C \in D^{[a, b]}(A')$ denote the derived tensor product.
Then we have a natural isomorphism $A' \otimes_A \Det_A(C) \simeq \Det_{A'}(A' \otimesL_A C)$.

\subsection{The key propositions}\label{subsec:a22}

Let $\RR$ be a (commutative) ring which contains a regular local ring $\Lambda$ such that $\RR$ is free of finite rank over $\Lambda$ and moreover there exists an isomorphism
\[
\Hom_{\Lambda}(\RR, \Lambda) \simeq \RR
\]
as $\RR$-modules.
Note that this implies that $\RR$ is Gorenstein.
Moreover, we obtain isomorphisms $\Ext_{\RR}^i(M, \RR) \simeq \Ext_{\Lambda}^i(M, \Lambda)$ for $\RR$-modules $M$, so no confusion about the base ring occurs when we write $E^i(M)$.

The following is a special case of Proposition \ref{prop:30}.

\begin{prop}\label{prop:31}
Let $C \in \De^{[0, 1]}(\RR)$ with $H^0(C) = 0$ and set $l = \chi_{\RR}(C)$.
Then we have a natural homomorphism
\[
\Psi_C: \bigwedge_{\RR}^l H^1(C) \to \Det_{\RR}^{-1}(C)
\]
which satisfies the following.
\begin{itemize}
\item[(1)]
We have $\Ker(\Psi_C) = \left( \bigwedge_{\RR}^l H^1(C)\right)_{\tor}$.
\item[(2)]
We have an isomorphism
\[
\Coker(\Psi_C) \simeq \RR / \Fitt_{\RR}(E^1(H^1(C))).
\]
\end{itemize}
\end{prop}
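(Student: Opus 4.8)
The plan is to reduce everything to a concrete free presentation. Since $C\in\De^{[0,1]}(\RR)$ with $H^0(C)=0$, we may choose a representative $C\simeq[F^0\xrightarrow{\ \partial\ }F^1]$ in degrees $0,1$ with $F^0,F^1$ finitely generated projective, and since $H^0(C)=\Ker(\partial)=0$ the map $\partial$ is injective. After localizing and choosing bases, or simply by passing to a complex of free modules (which is harmless on determinant modules, as remarked), we get an exact sequence $0\to F^0\xrightarrow{\partial}F^1\to H^1(C)\to 0$. Thus $\partial$ is an injective map of free modules with $\rank_\RR(F^1)-\rank_\RR(F^0)=\chi_\RR(C)=l$, and $\Det_\RR^{-1}(C)=\ddet_\RR(F^0)\otimes\ddet_\RR^{-1}(F^1)=\ddet_\RR^{-1}\!\big(\operatorname{coker}(\partial)\big)$ in the usual sense; more precisely $\Det_\RR^{-1}(C)$ is canonically $\ddet_\RR(F^0)^{*}\otimes\ddet_\RR(F^1)$ tensored appropriately, which I will identify with $\bigwedge_\RR^l$ applied at the "top" of $F^1$. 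The first step, then, is to write down $\Psi_C$ explicitly: given $x_1,\dots,x_l\in H^1(C)$, lift to $\tilde x_i\in F^1$, and send $x_1\wedge\cdots\wedge x_l$ to the class of $\tilde x_1\wedge\cdots\wedge\tilde x_l\wedge(\text{a basis of }\Image\partial)$ inside $\ddet_\RR(F^1)\otimes\ddet_\RR^{-1}(F^0)\cong\Det_\RR^{-1}(C)$. One checks this is independent of the lifts (a change of lift adds an element of $\Image\partial$, and wedging that against a full exterior basis of $\Image\partial$ kills it) and of the chosen basis of $\Image\partial$ (only the determinant of the change of basis enters, which is absorbed by the $\ddet^{-1}(F^0)$ factor). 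So $\Psi_C$ is well-defined, natural, and $\RR$-linear.

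For part (1): $\Det_\RR^{-1}(C)$ is free of rank one, hence torsion-free, so $\big(\bigwedge^l H^1(C)\big)_{\tor}\subseteq\Ker(\Psi_C)$ is automatic. For the reverse inclusion I would localize at the generic points: over $Q(\RR)$ (equivalently, localizing at each minimal prime), $\partial$ becomes a split injection of free modules and $H^1(C)\otimes Q(\RR)$ is free of rank $l$; there $\Psi_C$ becomes the standard isomorphism $\bigwedge^l(\text{free rank }l)\xrightarrow{\sim}\ddet$, in particular injective. Since $\Ker(\Psi_C)$ maps to zero in $\big(\bigwedge^l H^1(C)\big)\otimes Q(\RR)$, every element of $\Ker(\Psi_C)$ is $\RR$-torsion, giving $\Ker(\Psi_C)=\big(\bigwedge^l H^1(C)\big)_{\tor}$.

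Part (2) is the substantive computation. Write $M=H^1(C)$, so $0\to F^0\xrightarrow{\partial}F^1\to M\to 0$ with $\partial$ a matrix $H$ of size $b\times a$, $b-a=l$. I would identify $\Image(\Psi_C)$ inside $\Det_\RR^{-1}(C)\cong\RR$ (after trivializing the free rank-one module) with the ideal generated by all $l\times l$ minors of a "complementary" nature: choosing a lift of $l$ generators of $M$ corresponds to completing the columns of $H$ to a square $b\times b$ matrix, and the image of $\Psi_C$ is generated by determinants of such completions, which by multilinear (Laplace/Cauchy–Binet) expansion along the new columns equals the ideal of $a\times a$ minors of $H$ — i.e. $\Fitt_\RR(M)=\Fitt_\RR^0(M)$. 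Hence $\Coker(\Psi_C)\cong\RR/\Fitt_\RR(M)$. It then remains to match $\Fitt_\RR(M)$ with $\Fitt_\RR(E^1(M))$. Here one uses the exact sequence $0\to F^0\xrightarrow{H}F^1\to M\to 0$: dualizing gives $0\to M^{*}\to (F^1)^{*}\xrightarrow{H^{t}}(F^0)^{*}\to E^1(M)\to 0$, so $E^1(M)=\operatorname{coker}(H^{t})$, whose $0$-th Fitting ideal is generated by the maximal ($a\times a$) minors of $H^{t}$ — the same minors. Therefore $\Fitt_\RR(M)=\Fitt_\RR(E^1(M))$ and $\Coker(\Psi_C)\cong\RR/\Fitt_\RR(E^1(H^1(C)))$, as claimed; naturality of the isomorphism follows from the naturality of these Fitting-ideal identifications.

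The main obstacle I anticipate is the careful bookkeeping in part (2): being precise about the canonical identification $\Det_\RR^{-1}(C)\cong\RR$ up to a unit, tracking that the image of $\Psi_C$ is genuinely the \emph{ideal of $a\times a$ minors} rather than some variant, and checking the $E^1$-comparison is canonical (not just an equality of ideals after choices). The Gorenstein hypothesis on $\RR$ and the identification $E^i(M)\simeq\Ext^i_\Lambda(M,\Lambda)$ ensure the dual complex $\RHom_\RR(C,\RR)$ is again in $\De^{[-1,0]}(\RR)$ so that all the duality manipulations stay within perfect complexes; invoking this cleanly (rather than by hand with matrices) may in fact be the slickest route, identifying $\Coker(\Psi_C)$ with $\Coker(\Psi_{C^{*}[\,?\,]})$-type data and reading off the Fitting ideal of $H^1$ of the dual, which is $E^1(H^1(C))$.
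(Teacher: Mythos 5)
Your construction of $\Psi_C$ and your argument for part (1) are essentially the same as the paper's: the paper also takes a representative $[C^0\xrightarrow{\alpha}C^1]$, forms the exact sequence $0\to C^0\to C^1\to H^1(C)\to 0$, defines the map by lifting and wedging against a top form of $C^0$, checks well-definedness via $\bigwedge^{a+1}C^0=0$, and reads off the kernel from the generic fiber. Your part (2) also follows the paper's route (image ideal $=$ ideal of $a\times a$ minors of the presentation matrix, identified via the dual presentation), so at the level of approach there is nothing new.

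However, there is a genuine error in the middle of your part (2), even though it self-cancels and your final conclusion is correct. With the presentation $F^0\xrightarrow{H}F^1\to M\to 0$, where $\rank F^0=a$ and $\rank F^1=b=a+l$ with $l\geq 1$, the ideal generated by the $a\times a$ minors of $H$ is \emph{not} the initial Fitting ideal $\Fitt_{\RR}(M)=\Fitt^0_{\RR}(M)$. By the paper's convention (and the standard one), $\Fitt^0_\RR(M)$ is generated by the $b\times b$ minors of the presentation matrix; since $H$ has only $a<b$ columns, $\Fitt^0_\RR(M)=0$. What the $a\times a$ minors generate is the $l$-th Fitting ideal $\Fitt^l_\RR(M)$. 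Consequently your asserted identity $\Fitt_\RR(M)=\Fitt_\RR(E^1(M))$ is false for $l\geq 1$ (the left side vanishes, the right side in general does not), and the sentence ``Hence $\Coker(\Psi_C)\cong\RR/\Fitt_\RR(M)$'' would give $\RR/0=\RR$, which is wrong. Fortunately the error appears twice with opposite effect: you compute the image ideal correctly as the ideal of $a\times a$ minors, and you also correctly identify $\Fitt^0_\RR(E^1(M))$ as that same ideal via the dual exact sequence $0\to M^*\to (F^1)^*\xrightarrow{H^t}(F^0)^*\to E^1(M)\to 0$ (here $(F^1)^*\to(F^0)^*$ is a presentation of $E^1(M)$ with target of rank $a$, so the initial Fitting ideal is the ideal of $a\times a$ minors). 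The detour through $\Fitt_\RR(M)$ should simply be deleted: the image of $\Psi_C$ is the ideal of $a\times a$ minors of $H$, which by the dual presentation is exactly $\Fitt_\RR(E^1(H^1(C)))$, and this is the identification the paper makes.

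One smaller remark: you check independence of the lifts and of the chosen basis of $\Image\partial$, but not independence of the choice of representative complex (and projective modules need not have global bases). The paper handles both points at once by first defining $\ol{\Psi_C}$ intrinsically as the composite
\[
\bigwedge_{\RR}^l H^1(C)\to Q(\RR)\otimes_\RR\bigwedge_{\RR}^l H^1(C)\simeq Q(\RR)\otimes_\RR\Det_\RR^{-1}(C),
\]
and then using the explicit formula only to verify that the image lands in $\Det_\RR^{-1}(C)$; this makes naturality automatic and avoids any choice of basis in the definition.
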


\begin{proof}
We consider a natural homomorphism
\[
\ol{\Psi_C}: \bigwedge_{\RR}^l H^1(C) 
\to Q(\RR) \otimes_{\RR} \bigwedge_{\RR}^l H^1(C) 
\simeq \Det_{Q(\RR)}^{-1}(Q(\RR) \otimesL_{\RR} C)
\simeq Q(\RR) \otimes_{\RR} \Det_{\RR}^{-1}(C),
\]
where the middle isomorphism follows from $Q(\RR) \otimesL_{\RR} C \in \De^{[1, 1]}(Q(\RR))$.
We shall show that $\Image(\ol{\Psi_C}) \subset \Det_{\RR}^{-1}(C)$, and define $\Psi_C$ as the induced homomorphism.
In fact, we will give a more explicit description of $\Psi_C$.

Let us take a quasi-isomorphism $C \simeq [C^0 \overset{\alpha}{\to} C^1]$ with $C^0$ and  $C^1$ finitely generated and projective.
Put $a = \rank_{\RR}(C^0)$, so $\rank_{\RR}(C^1) = a+l$.
We consider the exact sequence
\[
0 \to C^0 \overset{\alpha}{\to} C^1 \overset{\beta}{\to} H^1(C) \to 0.
\]
Then we are able to define a homomorphism
\[
\Psi_C': \bigwedge_{\RR}^{a} C^0 \otimes_{\RR} \bigwedge_{\RR}^l H^1(C) 
\to \bigwedge_{\RR}^{a + l} C^1
\]
by
\[
(x_1 \wedge \dots \wedge x_a) \otimes (\beta(y_1) \wedge \dots \wedge \beta(y_l)) 
\mapsto \alpha(x_1) \wedge \dots \wedge \alpha(x_a) \wedge y_1 \wedge \dots \wedge y_l
\]
for $x_1, \dots, x_a \in C^0$ and $y_1, \dots, y_l \in C^1$.
The well-definedness of $\Psi_C'$ follows from $\bigwedge_{\RR}^{a + 1} C^0 = 0$ as in \cite[Lemma 2.1]{Sak18}.
Taking $\Det_{\RR}^{-1}(C^0) \otimes_{\RR} (-)$ to the both sides, we obtain a homomorphism
\[
\bigwedge_{\RR}^l H^1(C) 
\to \Hom_{\RR} \left( \bigwedge_{\RR}^{a} C^0, \bigwedge_{\RR}^{a + l} C^1 \right) \simeq \Det_{\RR}^{-1}(C).
\]
By construction, the homomorphism $\ol{\Psi_C}$ coincides with this map (followed by the canonical inclusion).
This implies $\Image(\ol{\Psi_C}) \subset \Det_{\RR}^{-1}(C)$ as claimed.

Then assertion (1) is clear from the construction of $\Psi_C$.
For assertion (2), we first note an isomorphism
\[
\Coker(\Psi_C)
\simeq \Coker(\Psi_C').
\]
Then we consider maps
\[
\Det_{\RR}(C^0) \otimes_{\RR} \bigwedge_{\RR}^l C^1
\overset{\id \otimes \bigwedge^l \beta}{\twoheadrightarrow} \Det_{\RR}(C^0) \otimes_{\RR} \bigwedge_{\RR}^l H^1(C)
\overset{\Psi_C'}{\to} \Det_{\RR}(C^1).
\]
By a direct computation after choosing bases of $C^0$ and $C^1$, we see that $\Image(\Psi_C')$ 
in $\Det_{\RR}(C^1) \simeq \RR$ is generated by the $a \times a$ minors of the matrix $\alpha$.
By the induced exact sequence
\[
0 \to H^1(C)^* \to (C^1)^* \overset{\alpha^*}{\to} (C^0)^* \to E^1(H^1(C)) \to 0,
\]
and by the definition of Fitting ideals, we obtain 
\[
\Coker(\Psi_C') \simeq \RR / \Fitt_{\RR}(E^1(H^1(C))).
\]
This completes the proof.
\end{proof}

For a prime $\qu$ of $\Lambda$, the subscript $(-)_{\qu}$ denotes the localization with respect to the multiplicative set $\Lambda \setminus \qu$.

The following is the key proposition, which we prove by using Proposition \ref{prop:31}.

\begin{prop}\label{prop:30}
Let $C$ be a perfect complex over $\RR$ such that $H^i(C)$ is pseudo-null for any $i \neq 1$.
We set $l = \chi_{\RR}(C)$.
Then we have a natural homomorphism
\[
\Psi_C: \bigwedge_{\RR}^l H^1(C) \to \Det_{\RR}^{-1}(C)
\]
which satisfies the following.
\begin{itemize}
\item[(1)]
We have $\Ker(\Psi_C) = \left( \bigwedge_{\RR}^l H^1(C)\right)_{\tor}$.
\item[(2)]
Let $\qu$ be a prime ideal of $\Lambda$ such that $\RR_{\qu} \otimesL_{\RR} C \in \De^{[0, 1]}(\RR_{\qu})$ and that $H^0(C)_{\qu} = 0$.
Then we have an isomorphism
\[
\Coker(\Psi_C)_{\qu} \simeq \RR_{\qu} / \Fitt_{\RR_{\qu}}(E^1(H^1(C))_{\qu}).
\]
\item[(3)]
Let $\qu$ be a prime ideal of $\Lambda$ such that $\RR_{\qu} \otimesL_{\RR} C \in \De^{[1, 2]}(\RR_{\qu})$ and that $\pd_{\RR_{\qu}}(H^2(C)_{\qu}) \leq 2$.
Then $\left(\Psi_C\right)_{\qu}$ is bijective.
\end{itemize}
\end{prop}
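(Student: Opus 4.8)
The plan is to deduce Proposition \ref{prop:30} from Proposition \ref{prop:31} by a localization argument combined with a comparison between the global complex and a suitable truncation. First I would reduce to the situation where $H^i(C)=0$ for all $i\neq 1$. Since $H^i(C)$ is pseudo-null for $i\neq 1$ by hypothesis, and pseudo-null modules vanish after inverting a height-one prime, the complex $C$ and its cohomology in degree $1$ agree generically over $Q(\RR)$; this is what makes the map $\ol{\Psi_C}$ well-defined exactly as in the proof of Proposition \ref{prop:31}. Concretely, I would still define $\Psi_C$ via the factorization through $Q(\RR)\otimes_\RR\bigwedge_\RR^l H^1(C)\simeq\Det_{Q(\RR)}^{-1}(Q(\RR)\otimesL_\RR C)\simeq Q(\RR)\otimes_\RR\Det_\RR^{-1}(C)$, where the first isomorphism uses that $Q(\RR)\otimesL_\RR C$ is concentrated in degree $1$ because all other cohomologies become $0$ after inverting the non-zero-divisors of $\RR$. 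Then one must check $\Image(\ol{\Psi_C})\subseteq\Det_\RR^{-1}(C)$; I expect this to follow by localizing at each height-one prime $\qu$ of $\Lambda$ (equivalently, each height-one prime of $\RR$) and invoking the explicit local description, since a submodule of a free module over a Gorenstein ring of the relevant type can be checked to lie inside another by checking at codimension-one primes — here one uses that $\Det_\RR^{-1}(C)$ is free of rank one and reflexive.

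Assertion (1) should then be immediate: $\Ker(\Psi_C)=\Ker(\ol{\Psi_C})$ is the kernel of the map into a torsion-free module (a localization of the free module $\Det_\RR^{-1}(C)$), hence equals $\bigl(\bigwedge_\RR^l H^1(C)\bigr)_{\tor}$, exactly as in Proposition \ref{prop:31}(1). For assertions (2) and (3) I would localize at the given prime $\qu$ of $\Lambda$. Localization commutes with $\otimesL$, with $\Det$ (via the natural isomorphism $\RR_\qu\otimes_\RR\Det_\RR(C)\simeq\Det_{\RR_\qu}(\RR_\qu\otimesL_\RR C)$ recorded in \S\ref{subsec:a21}), with exterior powers, and with $\Psi$ by naturality of the construction; so $(\Psi_C)_\qu$ is identified with $\Psi_{\RR_\qu\otimesL_\RR C}$. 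For (2), the hypothesis says $\RR_\qu\otimesL_\RR C\in\De^{[0,1]}(\RR_\qu)$ and $H^0(C)_\qu=0$, so $\RR_\qu\otimesL_\RR C$ satisfies the hypotheses of Proposition \ref{prop:31} over the ring $\RR_\qu$ (which again contains the regular local ring $\Lambda_\qu$ with the required Gorenstein-duality property, since $\Hom_{\Lambda}(\RR,\Lambda)\simeq\RR$ localizes); part (2) of that proposition then gives $\Coker(\Psi_C)_\qu\simeq\RR_\qu/\Fitt_{\RR_\qu}(E^1(H^1(\RR_\qu\otimesL_\RR C)))$, and $H^1(\RR_\qu\otimesL_\RR C)=H^1(C)_\qu$ because the lower cohomology is concentrated in degree $\leq 1$ there; one also uses $E^1(H^1(C))_\qu\simeq E^1(H^1(C)_\qu)$ (Ext commutes with localization for finitely generated modules over noetherian rings).

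For assertion (3), when $\RR_\qu\otimesL_\RR C\in\De^{[1,2]}(\RR_\qu)$ I would represent it by a length-two complex $[C^1\to C^2]$ with $H^1$ and $H^2$ its only cohomology, fitting into $0\to H^1\to C^1\to C^2\to H^2\to 0$ — wait, more precisely $0\to H^1(C)_\qu\to C^1\to Z\to 0$ and $0\to Z\to C^2\to H^2(C)_\qu\to 0$. The hypothesis $\pd_{\RR_\qu}(H^2(C)_\qu)\leq 2$ forces $\pd_{\RR_\qu}(Z)\leq 1$, so $Z$ has a length-one projective resolution; splicing, $H^1(C)_\qu$ also admits a short projective resolution and the complex can be shifted/replaced so that the explicit matrix computation underlying $\Psi$ yields a full-rank square matrix, making $\Psi$ an isomorphism. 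The cleanest route is probably: after localizing, $\Det^{-1}$ of a $\De^{[1,2]}$ complex with $\pd(H^2)\leq 2$ is naturally the reflexive hull (or simply $(\bigwedge^l H^1)^{**}$), and the generically-iso map $\Psi$ between a module and this hull is an isomorphism because both source and target agree at all primes of $\RR_\qu$ — one checks bijectivity at each localization by a dévissage on the depth of $H^2$, the key input being that $\pd(H^2)\leq 2$ controls the $\mathrm{Ext}$-obstructions. The main obstacle I anticipate is precisely this part (3): verifying that the projective-dimension bound on $H^2$ is exactly what is needed to kill the cokernel of $\Psi$ over $\RR_\qu$, which amounts to a careful bookkeeping of the duality spectral sequence (Proposition \ref{prop:20}) or of the minors of the presentation matrix, and making sure the argument does not secretly require $\RR_\qu$ to be regular. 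Establishing that $\Image(\ol{\Psi_C})$ lands in the integral determinant module without a regularity hypothesis — relying only on the Gorenstein/free-over-$\Lambda$ structure and reflexivity of rank-one free modules — is the secondary delicate point.
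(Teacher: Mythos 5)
Your construction of $\Psi_C$ and your arguments for parts (1) and (2) match the paper's approach almost verbatim: define $\ol{\Psi_C}$ as in Proposition \ref{prop:31}, verify $\Image(\ol{\Psi_C})\subset\Det_{\RR}^{-1}(C)$ by localizing at height-one primes of $\Lambda$ and invoking reflexivity of the free rank-one module $\Det_\RR^{-1}(C)$, and for (2) apply Proposition \ref{prop:31}(2) over $\RR_\qu$.

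For part (3) you correctly identify the first key point: $\pd_{\RR_\qu}(H^2(C)_\qu)\le 2$ combined with the exact sequence $0\to H^1(C)_\qu\to C^1\to C^2\to H^2(C)_\qu\to 0$ forces $H^1(C)_\qu$ to be projective of rank $l$, so $(\Psi_C)_\qu$ is a homomorphism between free rank-one $\RR_\qu$-modules. But your concluding step is garbled: a presentation by a ``full-rank square matrix'' yields injectivity, not bijectivity, and ``d\'evissage on the depth of $H^2$'' does not finish the job. What the paper does instead is \emph{reuse part (2)} at each height-one prime $\qu'\subset\qu$ of $\Lambda$: there $H^2(C)_{\qu'}=0$ by pseudo-nullity, so $\RR_{\qu'}\otimesL_\RR C\in\De^{[1,1]}(\RR_{\qu'})$ and $H^1(C)_{\qu'}$ is projective, whence part (2) gives $\Coker(\Psi_C)_{\qu'}=\RR_{\qu'}/\Fitt_{\RR_{\qu'}}(E^1(H^1(C))_{\qu'})=0$. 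Thus $(\Psi_C)_\qu$ is an injective map of free rank-one modules whose cokernel $\RR_\qu/(a)$ (with $a$ a non-zero-divisor) vanishes at every height-one prime of $\Lambda$ in $\qu$, hence is pseudo-null over $\Lambda_\qu$; since $\pd_{\Lambda_\qu}(\RR_\qu/(a))\le 1$, Auslander--Buchsbaum over the regular local ring $\Lambda_\qu$ rules out a nonzero pseudo-null cokernel, so $a$ is a unit and $(\Psi_C)_\qu$ is bijective. Your honest flag that (3) was the main obstacle was warranted; the missing ingredient is exactly this reuse of part (2) in codimension one followed by the pseudo-nullity-versus-projective-dimension argument.
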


\begin{proof}
Since $H^i(C)$ is torsion for any $i \neq 1$, as in the proof of Proposition \ref{prop:31}, we have a natural homomorphism
\[
\ol{\Psi_C}: \bigwedge_{\RR}^l H^1(C) 
\to 
Q(\RR) \otimes_{\RR} \Det_{\RR}^{-1}(C).
\]
We claim that $\Image(\ol{\Psi_C}) \subset \Det_{\RR}^{-1}(C)$.
Let $\qu$ be any prime ideal of $\Lambda$ with $\height(\qu) \leq 1$.
By the assumption, we have $H^i(C)_{\qu} = 0$ for $i \neq 1$.
Hence the complex $\RR_{\qu} \otimesL_{\RR} C$ satisfies the condition in Proposition \ref{prop:31} over $\RR_{\qu}$ (since $\dim(\RR_{\qu}) \leq 1$).
Therefore, Proposition \ref{prop:31} implies that $\Image(\ol{\Psi_C})_{\qu} \subset \Det_{\RR}^{-1}(C)_{\qu}$.
Since $\Det_{\RR}^{-1}(C)$ is reflexive, this proves $\Image(\ol{\Psi_C}) \subset \Det_{\RR}^{-1}(C)$.

Therefore, we are able to define $\Psi_C$ as the homomorphism induced by $\ol{\Psi_C}$.
Now assertion (1) is clear and assertion (2) follows directly from Proposition \ref{prop:31}(2).
We show assertion (3).
By the assumptions, the module $H^1(C)_{\qu}$ is projective over $\RR_{\qu}$ of rank $l$.
Therefore, $\left(\Psi_C\right)_{\qu}$ is a homomorphism between free modules of rank one.
For any height one prime $\qu'$ of $\Lambda$ contained in $\qu$, since $\RR_{\qu'} \otimesL_{\RR} C \in \De^{[1, 1]}(\RR_{\qu'})$ and $H^1(C)_{\qu'}$ is projective, assertion (2) implies that $\left(\Psi_C\right)_{\qu'}$ is surjective.
Hence we see that $\left(\Psi_C \right)_{\qu}$ is bijective.
\end{proof}

\subsection{Relations with reflexive hulls}\label{subsec:a23}

Let $\RR \supset \Lambda$ be as in \S \ref{subsec:a22}.
Recall that, for a finitely generated $\RR$-module $M$, the reflexive hull is defined as $M^{**}$ together with the natural map $\alpha_M: M \to M^{**}$.
It is known that the kernel of $\alpha_M$ is $M_{\tor}$ and the cokernel of $\alpha_M$ is pseudo-null, and that those properties characterizes $\alpha_M$ up to isomorphism.
For that reason, by slight abuse of notation, we say that a homomorphism $f$ from $M$ to a reflexive $\RR$-module is the reflexive hull of $M$ if the kernel of $f$ is $M_{\tor}$ and the cokernel of $f$ is pseudo-null.

The following lemma gives a relation between Proposition \ref{prop:30} and the reflexive hull.

\begin{lem}\label{lem:40}
In the situation of Proposition \ref{prop:30},
the map $\Psi_C$ is the reflexive hull of $\bigwedge_{\RR}^l H^1(C)$ if and only if $H^1(C)_{\tor}$ is pseudo-null.
\end{lem}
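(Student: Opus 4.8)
The plan is to read the statement off from Proposition~\ref{prop:30} together with the characterization of the reflexive hull recalled in \S\ref{subsec:a23}. The target $\Det_{\RR}^{-1}(C)$ is an invertible $\RR$-module, hence reflexive, and $\Ker(\Psi_C)=\bigl(\bigwedge_{\RR}^l H^1(C)\bigr)_{\tor}$ by Proposition~\ref{prop:30}(1); so by that characterization $\Psi_C$ is the reflexive hull of $\bigwedge_{\RR}^l H^1(C)$ if and only if $\Coker(\Psi_C)$ is pseudo-null. Therefore the lemma reduces to the assertion that $\Coker(\Psi_C)$ is pseudo-null if and only if $H^1(C)_{\tor}$ is, and by definition of pseudo-nullity this amounts to showing, for every prime $\qu$ of $\Lambda$ with $\height(\qu)\le 1$, that $\Coker(\Psi_C)_{\qu}=0$ if and only if $(H^1(C)_{\tor})_{\qu}=0$.

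Fix such a $\qu$. Since $H^i(C)$ is pseudo-null for $i\neq 1$, it dies after localization, so $\RR_{\qu}\otimesL_{\RR} C\simeq H^1(C)_{\qu}[-1]$; this is a perfect complex over $\RR_{\qu}$, which is Gorenstein of dimension $\le 1$, so $M:=H^1(C)_{\qu}$ has finite, hence (as $\dim\RR_{\qu}\le 1$) at most one, projective dimension over $\RR_{\qu}$. Thus $\RR_{\qu}\otimesL_{\RR} C\in\De^{[0, 1]}(\RR_{\qu})$ and $H^0(C)_{\qu}=0$, so Proposition~\ref{prop:30}(2) gives
\[
\Coker(\Psi_C)_{\qu}\simeq \RR_{\qu}/\Fitt_{\RR_{\qu}}\bigl(E^1(H^1(C))_{\qu}\bigr),
\]
which vanishes exactly when $E^1(H^1(C))_{\qu}=0$, since a Fitting ideal equals the unit ideal precisely when the module is zero.

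It then remains to identify the condition $E^1(H^1(C))_{\qu}=0$ with torsion-freeness of $M=H^1(C)_{\qu}$ over $\RR_{\qu}$, which (because $\Lambda\setminus\qu$ consists of non-zero-divisors of $\RR$) is the same as $(H^1(C)_{\tor})_{\qu}=0$. For $\height(\qu)=0$ this is trivial, so assume $\height(\qu)=1$ and let $\pi$ be a uniformizer of the discrete valuation ring $\Lambda_{\qu}$. Since $\RR_{\qu}$ is finite free over $\Lambda_{\qu}$, the module $M$ is torsion-free over $\RR_{\qu}$ if and only if $\pi$ is a non-zero-divisor on $M$, if and only if $\depth_{\RR_{\qu}}(M)\ge 1$; and since $M$ has finite projective dimension over the one-dimensional Gorenstein ring $\RR_{\qu}$, Auslander--Buchsbaum (applied at each maximal ideal of $\RR_{\qu}$) shows this is equivalent to $M$ being free over $\RR_{\qu}$, which in turn holds if and only if $E^1(M)=0$ --- for a finitely generated module of finite projective dimension over a local ring, the top nonvanishing $\Ext$ into the ring lies in the degree equal to the projective dimension. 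Concatenating these equivalences proves the lemma.

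I expect the last paragraph to be the only step with genuine content: converting the triviality of $\Fitt_{\RR_{\qu}}(E^1(H^1(C))_{\qu})$ into torsion-freeness of $H^1(C)_{\qu}$ uses essentially that $\RR_{\qu}$ is Gorenstein and that the perfectness of $C$ forces $H^1(C)_{\qu}$ to have finite projective dimension. Everything else is formal bookkeeping with Proposition~\ref{prop:30} and the localization functor $(-)_{\qu}$.
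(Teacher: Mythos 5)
Your proof is correct and follows the same skeleton as the paper's: reduce to the statement that $\Coker(\Psi_C)$ is pseudo-null if and only if $H^1(C)_{\tor}$ is, localize at a prime $\qu$ of $\Lambda$ of height at most one, apply Proposition~\ref{prop:30}(2), and convert the vanishing of $E^1(H^1(C))_{\qu}$ into torsion-freeness of $H^1(C)_{\qu}$. The only (minor) deviation is in the last step: the paper passes through $\Lambda_{\qu}$, which is a field or a discrete valuation ring, so the chain of equivalences $E^1=0 \Leftrightarrow$ projective $\Leftrightarrow$ torsion-free is immediate, whereas you stay over $\RR_{\qu}$ and invoke perfectness plus Auslander--Buchsbaum over the one-dimensional Gorenstein ring $\RR_{\qu}$; both are correct, and the paper's version is slightly shorter.
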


\begin{proof}
By definition, $\Psi_C$ is the reflexive hull of $\bigwedge_{\RR}^l H^1(C)$ if and only if $\Coker(\Psi_C)$ is pseudo-null.
For each prime $\qu$ of $\Lambda$ with $\height(\qu) \leq 1$, we have
\begin{align}
\Coker(\Psi_C)_{\qu} = 0 
& \Leftrightarrow \Fitt_{\RR_{\qu}}(E^1(H^1(C))_{\qu}) = \RR_{\qu}\\
& \Leftrightarrow E^1(H^1(C))_{\qu} = 0\\
&\Leftrightarrow \text{$H^1(C)_{\qu}$ is projective over $\Lambda_{\qu}$}\\
&\Leftrightarrow \text{$H^1(C)_{\qu}$ is torsion-free over $\Lambda_{\qu}$},
\end{align}
where the first equivalence follows from Proposition \ref{prop:30}(2) and the final from $\Lambda_{\qu}$ being either a field or a discrete valuation ring.
Hence the lemma follows.
\end{proof}

We also mention a relation with the notion of exterior power biduals, which is developed by Burns, Sano, and Sakamoto, in the theory of Euler systems (e.g., \cite{BS19}, \cite{Sak18}).
In general, for a finitely generated $\RR$-module $M$, we define the $l$-th exterior power bidual by
\[
\bigcap_{\RR}^l M = \left( \bigwedge_{\RR}^l M^* \right)^*.
\]
Then we have a natural map
\[
\alpha_M^l: \bigwedge_{\RR}^l M \to \bigcap_{\RR}^l M
\]
defined by
\[
x_1 \wedge \dots \wedge x_l \mapsto [\phi_1 \wedge \dots \wedge \phi_l \mapsto \det(\phi_i(x_j))_{ij}]
\]
for $x_1, \dots, x_l \in M$ and $\phi_1, \dots, \phi_l \in M^*$.
For example, when $l = 1$, then $\alpha_M^1$ is identified with the natural map $\alpha_M: M \to M^{**}$.

Now we prove the following.

\begin{prop}\label{prop:43}
In the situation of Proposition \ref{prop:30}, if moreover $H^1(C)_{\tor}$ is pseudo-null, then we have a natural isomorphism
\[
\bigcap_{\RR}^l H^1(C) \simeq \Det^{-1}_{\RR}(C).
\]
In particular, $\bigcap_{\RR}^l H^1(C)$ is free of rank one over $\RR$.
\end{prop}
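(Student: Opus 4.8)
The plan is to combine Lemma \ref{lem:40} with the general formalism of exterior power biduals, relying on the fact that $\Det_{\RR}^{-1}(C)$ is free of rank one, hence in particular reflexive. Under the hypothesis that $H^1(C)_{\tor}$ is pseudo-null, Lemma \ref{lem:40} tells us that $\Psi_C\colon \bigwedge_{\RR}^l H^1(C) \to \Det_{\RR}^{-1}(C)$ is a reflexive hull of $\bigwedge_{\RR}^l H^1(C)$. The key observation is that $\alpha_M^l\colon \bigwedge_{\RR}^l M \to \bigcap_{\RR}^l M$ is \emph{also} a reflexive hull of $\bigwedge_{\RR}^l M$ (for $M = H^1(C)$): indeed $\bigcap_{\RR}^l M$ is reflexive by construction, being the linear dual of $\bigwedge_{\RR}^l M^*$, and a standard computation shows that the kernel of $\alpha_M^l$ is exactly the torsion submodule while its cokernel is pseudo-null. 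Since the reflexive hull of a module is unique up to canonical isomorphism — a fact already invoked in \S \ref{subsec:a23} — we may identify the targets of $\Psi_C$ and $\alpha_{H^1(C)}^l$ compatibly, giving the asserted natural isomorphism $\bigcap_{\RR}^l H^1(C) \simeq \Det_{\RR}^{-1}(C)$. The last sentence is then immediate from the definition of the determinant module.

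First I would spell out why $\alpha_M^l$ has kernel $M_{\tor}$ and pseudo-null cokernel. For the kernel: $M_{\tor}$ clearly maps to zero since $\det(\phi_i(x_j))$ vanishes whenever some $x_j$ is torsion (as $\phi_i(x_j)$ then lies in $\RR_{\tor} = 0$, using that $\RR$ has no zero-divisors coming from $\Lambda$ being a domain), and conversely localizing at a height-one prime $\qu$ of $\Lambda$ reduces to $M_{\qu}$ over the discrete valuation ring (or field) $\Lambda_{\qu}$, where $M_{\qu} \cong \Lambda_{\qu}^r \oplus (\text{torsion})$ and $\alpha^l$ on the free part is an isomorphism onto $\bigcap^l$. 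For the cokernel: after localizing at any height-one prime $\qu$ the module $M_{\qu}/(M_{\qu})_{\tor}$ is free, so $\alpha_M^l$ becomes an isomorphism there, whence $\Coker(\alpha_M^l)$ is pseudo-null. This is essentially standard in the Burns–Sano–Sakamoto framework, so I would keep the argument brief and cite \cite{BS19} or \cite{Sak18} as appropriate.

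The second step is the uniqueness of reflexive hulls, which the paper has already stated in \S \ref{subsec:a23}: if $f\colon M \to N$ and $f'\colon M \to N'$ are two homomorphisms to reflexive modules, each with kernel $M_{\tor}$ and pseudo-null cokernel, then there is a unique isomorphism $N \xrightarrow{\sim} N'$ commuting with $f$ and $f'$. Applying this with $f = \Psi_C$ (reflexive hull by Lemma \ref{lem:40}, valid precisely because $H^1(C)_{\tor}$ is pseudo-null) and $f' = \alpha_{H^1(C)}^l$ (reflexive hull by the previous paragraph) yields the natural isomorphism $\Det_{\RR}^{-1}(C) \simeq \bigcap_{\RR}^l H^1(C)$. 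Naturality follows from the naturality of both $\Psi_C$ and $\alpha_M^l$ in $C$ respectively $M$, together with the uniqueness clause.

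I do not expect a serious obstacle here: the proposition is essentially a formal consequence of Lemma \ref{lem:40} plus the bookkeeping that $\alpha_M^l$ is a reflexive hull. The only point requiring a little care is the claim that $\Det_{\RR}^{-1}(C)$ is reflexive — but this is trivial since it is free of rank one over $\RR$ (as $\RR$ is a domain, or at worst since $\RR$ is Gorenstein and finite free rank-one modules are always reflexive). The mildly delicate verification is that the kernel of $\alpha_M^l$ is \emph{exactly} $M_{\tor}$ rather than merely containing it, which is the one place the argument genuinely uses reduction to the height-one localizations over $\Lambda$; I would present that reduction explicitly but without belaboring the DVR computation.
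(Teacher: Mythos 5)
Your overall architecture — show that both $\Psi_C$ and $\alpha_{H^1(C)}^l$ are reflexive hulls of $\bigwedge_{\RR}^l H^1(C)$ and invoke uniqueness of the reflexive hull — is a legitimate route, and in fact a somewhat cleaner packaging than the paper's direct comparison of the two reflexive modules $\bigcap^l_{\RR} H^1(C)$ and $\Det_{\RR}^{-1}(C)$ inside $Q(\RR) \otimes_{\RR} \Det_{\RR}^{-1}(C)$. But the verification that $\alpha_{H^1(C)}^l$ is a reflexive hull has a genuine gap, and it is exactly at the place where the paper's proof does real work.

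You justify the pseudo-nullity of $\Coker(\alpha_M^l)$ (and the identification of the kernel with the torsion submodule) by localizing at a height-one prime $\qu$ of $\Lambda$ and asserting $M_{\qu} \cong \Lambda_{\qu}^r \oplus (\text{torsion})$ — a decomposition coming from the structure theorem over the DVR $\Lambda_{\qu}$. This is the wrong module structure: $\alpha_M^l$ is built from $\RR$-linear duals and $\RR$-exterior powers, so what matters is $M_{\qu}$ as an $\RR_{\qu}$-module, and $\RR_{\qu}$ is merely a finite flat $\Lambda_{\qu}$-algebra, in general neither regular nor a product of DVRs (this is the whole point of working equivariantly). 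Torsion-freeness of $M_{\qu}$ over $\RR_{\qu}$ does \emph{not} imply freeness there, and without freeness one cannot conclude that $\alpha_{M_{\qu}}^l$ is an isomorphism; for $l \geq 2$ the surjectivity of the bidual map $\alpha_M^l$ over a one-dimensional Gorenstein ring genuinely fails for non-free torsion-free modules. Your appeal to the Burns–Sano–Sakamoto framework does not cover this: their statements of this kind are proved over regular or normal base rings, where the DVR reduction is available.

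The missing ingredient is precisely the one the paper's proof uses: the perfect complex hypothesis. Since $H^i(C)_{\qu}=0$ for $i\neq 1$ when $\height(\qu)\leq 1$, the complex $\RR_{\qu}\otimesL_{\RR}C$ is concentrated in degree $1$ and perfect, so $\pd_{\RR_{\qu}}(H^1(C)_{\qu})<\infty$; combined with torsion-freeness (from pseudo-nullity of $H^1(C)_{\tor}$) and the Auslander–Buchsbaum formula over the Gorenstein semilocal ring $\RR_{\qu}$ of depth $\leq 1$, this forces $H^1(C)_{\qu}$ to be free over $\RR_{\qu}$. Once you have that, your argument does close: $\alpha^l$ on a free module is an isomorphism, the reflexive-hull claim holds, and the uniqueness argument finishes. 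So the fix is to replace the $\Lambda_{\qu}$-structure-theorem step with the Auslander–Buchsbaum argument; without that, the crucial lemma about $\alpha_M^l$ is simply asserted, not proved, in the non-regular equivariant case that the proposition is designed to handle.
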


\begin{proof}
Since $H^i(C)$ is torsion for any $i \neq 1$ and the exterior power bidual is torsion-free, we have a natural injective map
\[
\bigcap_{\RR}^l H^1(C) 
\hookrightarrow Q(\RR) \otimes_{\RR} \bigcap_{\RR}^l H^1(C) 
\simeq \Det_{Q(\RR)}^{-1}(Q(\RR) \otimesL_{\RR} C)
\simeq Q(\RR) \otimes_{\RR} \Det_{\RR}^{-1}(C).
\]
We shall show that the image of this composite map coincides with $\Det_{\RR}^{-1}(C)$.
For each prime ideal $\qu$ with $\height(\qu) \leq 1$, the assumption and the Auslander-Buchsbaum formula imply that $H^1(C)_{\qu}$ is free.
Hence the claim is true after localization at $\qu$.
Since both $\bigcap_{\RR}^l H^1(C)$ and $\Det_{\RR}^{-1}(C)$ are reflexive, we obtain the claim.
\end{proof}

The $l = 1$ case of this proposition will be used to prove Theorem \ref{thm:101}.
The author does not think that we can essentially simplify the proof of the proposition even if we assume $l = 1$.

Note that if $H^1(C)_{\tor}$ is pseudo-null, Proposition \ref{prop:43} implies that we can use $\bigcap_{\RR}^l H^1(C)$ instead of $\Det^{-1}_{\RR}(C)$ in \eqref{eq:56}.
However, we will have to deal with the case where $H^1(C)_{\tor}$ is not pseudo-null, and then this alternative is not available.

\section{Facts on complexes}\label{sec:14}

In this section, we review facts on arithmetic complexes.
Essentially the statements have already been written in \cite{BCG+} and \cite{BCG+b}, but we need a slight modification, for example, from $S_p(E)$ to larger $\Sigma_f$.
For more details, see also Nekov\'{a}\v{r}'s book \cite{Nek06}.

We consider the following situation.
Let $E$ be a number field (in this section we do not assume that $E$ is a CM-field).
Let $K$ be an abelian extension of $E$ that is a $\Z_p^r$-extension ($r \geq 1$) of a number field.
Suppose that $K$ contains $\mu_{p^{\infty}} = \bigcup_{n} \mu_{p^n}$.
Put $\RR = \Z_p[[\Gal(K/E)]]$.

\subsection{Generalities}\label{subsec:59}

As in the introduction, we write $S_p(E)$ and $S_{\infty}(E)$ as the sets of $p$-adic primes and infinite places of $E$, respectively.
We define $S_{\ram, p}(K/E)$ as the set of finite primes $v \not \in S_p(E)$ of $E$ such that the ramification index of $v$ in $K/E$ is divisible by $p$.
Let $\Sigma$ be a finite set of places of $E$ such that 
\begin{equation}\label{eq:60}
\Sigma \supset S_{\infty}(E) \cup S_p(E) \cup S_{\ram, p}(K/E).
\end{equation}
For example, if $p \nmid [K: \tilde{E}]$, then $S_{\ram, p}(K/E) = \emptyset$, so we only have to assume $\Sigma \supset S_{\infty}(E) \cup S_p(E)$.
This condition on $\Sigma$ will be needed in Proposition \ref{prop:a23} below.
For such a set $\Sigma$, we put $\Sigma_f = \Sigma \setminus S_{\infty}(E)$.

We define $K_{\Sigma}$ as the maximal algebraic extension of $K$ which is unramified at any prime not lying above a prime in $\Sigma$.

Let $T$ be a finitely generated free $\Z_p$-module equipped with an action of $\Gal(K_{\Sigma}/E)$.
(We will need only the case $T = \Z_p(1)$ in this paper.)
Let $\RG_{\Iw}(K_{\Sigma}/K, T)$ be the complex over $\RR$ that computes the global Iwasawa cohomology groups
\[
H^i_{\Iw}(K_{\Sigma}/K, T) = \varprojlim_{F} H^i(K_{\Sigma}/F, T),
\]
where $F$ runs over all intermediate number fields in $K/E$.
As local counterparts, for each finite prime $v$ of $E$, let $\RG_{\Iw}(K_{v}, T)$ be the complex over $\RR$ that computes the local Iwasawa cohomology groups
\[
H^i_{\Iw}(K_{v}, T) = \varprojlim_{F} H^i(F \otimes_E E_{v}, T).
\]
The actual construction of these complexes will be given in the proof of the next proposition.

\begin{prop}\label{prop:a23}
We have
\[
\RG_{\Iw}(K_{\Sigma}/K, T) \in \De^{[0, 2]}(\RR)
\]
and
\[
\RG_{\Iw}(K_{v}, T) \in \De^{[0, 2]}(\RR)
\]
for each finite prime $v$ of $E$.
\end{prop}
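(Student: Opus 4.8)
The plan is to realize $\RG_{\Iw}(K_\Sigma/K,T)$ and $\RG_{\Iw}(K_v,T)$ as complexes of continuous cochains with an \emph{induced} coefficient module over the absolute Galois group of a number field, resp.\ of a local field, and then to extract both perfectness over $\RR$ and the amplitude bound $[0,2]$ from the standard homological finiteness of those Galois groups together with the estimate $\cd_p \le 2$, which is available because $p$ is odd and $\Sigma \supset S_p(E)$.

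First I would fix the construction. Let $\mathbf{F}_{\RR}$ denote $\RR = \Z_p[[\Gal(K/E)]]$ viewed as a continuous module over the relevant (restricted-ramification) Galois group of $E$ through the tautological character into $\Gal(K/E)$; this is the completed module induced from the trivial module $\Z_p$ along the closed subgroup cutting out $K$. One then defines $\RG_{\Iw}(K_\Sigma/K,T)$ as the continuous cochain complex of that Galois group with coefficients in $\mathbf{F}_{\RR}\otimes_{\Z_p}T$, and Shapiro's lemma identifies its cohomology with $\varprojlim_F H^i(K_\Sigma/F,T)$; the local complex $\RG_{\Iw}(K_v,T)$ is constructed in the same way, with coefficients induced from the local Iwasawa algebra $\Z_p[[\Gal(K_v/E_v)]]$ up to $\RR$. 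In each case the coefficient module is finitely generated and free over the noetherian ring $\RR$. I expect condition \eqref{eq:60} to enter here, to bookkeep the finite set of primes with respect to which the Galois group of $E$ is taken: it ensures that the only ramification of $K/E$ not already recorded by $\Sigma$ is tame with ramification index prime to $p$, so that enlarging $\Sigma$ by those places alters the complex only by auxiliary local terms that are themselves perfect of amplitude $[0,2]$.

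Then, for perfectness and amplitude, I would use that the Galois groups occurring are those of a number field and of a finite place of a number field: they are topologically finitely presented (of homological type $\mathrm{FP}_\infty$ over $\Z_p$), and, since $p$ is odd and $\Sigma$ contains $S_\infty(E)\cup S_p(E)$, they have $\cd_p \le 2$ (unconditionally in the local case). One then invokes the standard lemma (Fukaya--Kato; Nekov\'{a}\v{r} \cite{Nek06}): for a profinite group $G$ of finite $p$-cohomological dimension $d$ and homological type $\mathrm{FP}_\infty$, and a compact $G$-module $N$ that is finitely generated and free over a noetherian ring $R$, the complex $\RG_{\mathrm{cont}}(G,N)$ is a perfect complex of $R$-modules, its formation commutes with base change $R\to R'$, and its Tor-amplitude is contained in $[0,d]$. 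Applying this with $R=\RR$, $N=\mathbf{F}_{\RR}\otimes_{\Z_p}T$ and $d\le 2$ shows that $\RG_{\Iw}(K_\Sigma/K,T)$ and $\RG_{\Iw}(K_v,T)$ are perfect over $\RR$ with Tor-amplitude in $[0,2]$; there can be no cohomology in negative degrees because one works with cochains in non-negative degrees. Hence both complexes lie in $\De^{[0,2]}(\RR)$.

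The hard part is precisely the phenomenon that makes the equivariant setting new: one must \emph{not} apply the finiteness lemma to the Galois group of the infinite extension $K/E$, since that group surjects onto $\Gal(K/\tilde{E})$, a finite group of order $[K:\tilde{E}]$ divisible by $p$, and hence has infinite $p$-cohomological dimension. The entire $K$-dependence has to remain packaged inside the induced coefficient module $\mathbf{F}_{\RR}$ over the ``good'' group, namely the restricted-ramification Galois group of the number field $E$, and only after this reduction (Shapiro's lemma) is $\cd_p \le 2$ applicable. Matching this up with the relatively weak hypothesis \eqref{eq:60} — so that one need not adjoin every ramified prime of $K/E$ to $\Sigma$ — is the accompanying technical chore; for the perfectness and base-change formalism I would cite \cite{Nek06} rather than reprove it.
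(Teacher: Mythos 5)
Your overall strategy — realize the complexes via Shapiro's lemma as $\RG$ of the Galois group of $E$ (resp.\ $E_v$) with coefficients in the induced module $T\otimes_{\Z_p}\RR$, then invoke the Nekov\'a\v{r}/Fukaya--Kato perfectness-and-amplitude lemma together with $\cd_p\le 2$ — is exactly the paper's approach, and you correctly identify the need to avoid working directly over $\Gal(K_\Sigma/K)$. The gap is in your treatment of $\cd_p\Gal(K_\Sigma/E)\le 2$: you assert this is ``available because $p$ is odd and $\Sigma\supset S_\infty(E)\cup S_p(E)$,'' but that standard bound applies to $\Gal(E_\Sigma/E)$ where $E_\Sigma$ is the maximal extension of $E$ unramified outside $\Sigma$. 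Here $K_\Sigma$ is the maximal extension of \emph{$K$} unramified outside $\Sigma$, and when $K/E$ is itself ramified at a prime $v\notin\Sigma$ one has $K\not\subset E_\Sigma$, so $\Gal(K_\Sigma/E)$ is simply not a restricted-ramification Galois group of $E$ and the citation does not apply. Your sketch of how \eqref{eq:60} might enter — enlarging $\Sigma$ and peeling off auxiliary local terms — is a different route that you haven't carried out, and it isn't the mechanism the hypothesis is designed for.

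The actual role of \eqref{eq:60} is this. Let $E_1\subset K$ be the maximal intermediate field with $p\nmid[E_1:E]$, so $K/E_1$ is pro-$p$. Condition \eqref{eq:60} guarantees that every finite prime at which $K/E$ is ramified with $p$-power ramification index already lies in $\Sigma$; since $K/E_1$ is pro-$p$, its ramification at any prime not in $\Sigma$ would be both pro-$p$ and (by \eqref{eq:60}) of order prime to $p$, hence trivial. Thus $K/E_1$ is unramified outside $\Sigma$, which forces $K_\Sigma=(E_1)_\Sigma$. Now one \emph{can} cite the restricted-ramification bound to get $\cd_p\Gal(K_\Sigma/E_1)\le 2$ (NSW (8.1.18)), and since $p\nmid[E_1:E]$ the same bound for $\Gal(K_\Sigma/E)$ follows from NSW (3.3.5). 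This reduction to $E_1$ is the step your proposal is missing; without it, the claimed $\cd_p\le 2$ is not justified in the equivariant (i.e.\ $p\mid[K:\tilde E]$) case.
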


\begin{proof}
We can define the complexes concerned by
\[
\RG_{\Iw}(K_{\Sigma}/K, T) = \RG(K_{\Sigma}/E, T \otimes_{\Z_p} \RR)
\]
and
\[
\RG_{\Iw}(K_v, T) = \RG(E_v, T \otimes_{\Z_p} \RR),
\]
where $T \otimes_{\Z_p} \RR$ is regarded as a Galois representation of $\Gal(\ol{E}/E)$ over $\RR$ 
by defining the action of the Galois group on $\RR$ via
\[
\Gal(\ol{E}/E) \twoheadrightarrow \Gal(K/E) \hookrightarrow \RR^{\times} \overset{\iota}{\to} \RR^{\times},
\]
where the first two maps are the natural ones and $\iota$ denotes the involution which inverts every group element.
Then Shapiro's lemma implies the above-mentioned descriptions of the cohomology groups as inverse limits.

By \cite[Proposition 4.2.9]{Nek06}, it is therefore enough to show that $\cd_p \Gal(K_{\Sigma}/E) \leq 2$ and $\cd_p \Gal(\ol{E_v}/E_v) \leq 2$, where $\cd_p$ denotes the $p$-cohomological dimension.
The latter claim $\cd_p \Gal(\ol{E_v}/E_v) \leq 2$ is a well-known fact (see \cite[Theorem (7.1.8)]{NSW08}).
For $\cd_p \Gal(K_{\Sigma}/E) \leq 2$, let $E_1$ be the maximal intermediate number field of $K/E$ such that $p \nmid [E_1: E]$.
Thanks to assumption \eqref{eq:60}, the extension $K/E_1$ is unramified outside $\Sigma$.
Therefore, $K_{\Sigma}$ coincides with the maximal algebraic extension of $E_1$ which is unramified outside $\Sigma$.
Then we have $\cd_p \Gal(K_{\Sigma}/E_1) \leq 2$ (see \cite[Proposition (8.1.18)]{NSW08}).
Since $p \nmid [E_1:E]$, we obtain $\cd_p \Gal(K_{\Sigma}/E) \leq 2$ (see \cite[Proposition (3.3.5)]{NSW08}), as claimed.
\end{proof}

\begin{defn}\label{defn:32}
For each subset $S \subset \Sigma_f$, we define a complex $C_{\Sigma, S}(T)$ by a triangle
\begin{equation}\label{eq:16}
C_{\Sigma, S}(T) \to
\RG_{\Iw}(K_{\Sigma}/K, T) 
\to \bigoplus_{v \in S} \RG_{\Iw}(K_{v}, T),
\end{equation}
where the second map is the natural restriction morphisms.
\end{defn}

For example, we have
\[
C_{\Sigma, \emptyset}(T) \simeq \RG_{\Iw}(K_{\Sigma}/K, T).
\]
By the Poitou-Tate duality, we also have a triangle
\begin{equation}\label{eq:17}
C_{\Sigma, S}(T) 
\to \bigoplus_{v \in \Sigma_f \setminus S} \RG_{\Iw}(K_{v}, T)
\to \RG(K_{\Sigma}/K, T^{\dual}(1))^{\dual}[-2],
\end{equation}
so in particular
\[
C_{\Sigma, \Sigma_f}(T) \simeq \RG(K_{\Sigma}/K, T^{\dual}(1))^{\dual}[-3].
\]

The following duality theorem plays an important role.

\begin{prop}\label{prop:44}
Let $T^{\sharp}$ be the $\Z_p$-linear dual of $T$.
\begin{itemize}
\item[(1)]
For each subset $S \subset \Sigma_f$, we have a quasi-isomorphism
\[
\RHom_{\RR}(C_{\Sigma, S}(T), \RR)^{\iota} \simeq C_{\Sigma, \Sigma_f \setminus S}(T^{\sharp}(1))[3].
\]
\item[(2)]
For each finite prime $v$ of $E$, we have a quasi-isomorphism
\[
\RHom_{\RR}(\RG_{\Iw}(K_v, T), \RR)^{\iota} \simeq \RG_{\Iw}(K_v, T^{\sharp}(1))[2]
\]
\end{itemize}
\end{prop}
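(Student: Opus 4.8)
The plan is to reduce both assertions to the $\RR$-linear forms of local and global Galois-cohomological duality established in Nekov\'a\v{r}'s book \cite{Nek06}, keeping careful track of the shifts, the Tate twist, and the involution $\iota$, which is built into the very definition $\RG_{\Iw}(K_v, T) = \RG(E_v, T\otimes_{\Z_p}\RR)$ through the $\iota$-twisted Galois action on the coefficient ring $\RR$.

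I would prove (2) first, as (1) will be deduced from it. Recall from the proof of Proposition \ref{prop:a23} that $\RG_{\Iw}(K_v, T) = \RG(E_v, T\otimes_{\Z_p}\RR)$ is a perfect complex of $\RR$-modules, precisely because $\cd_p\Gal(\ol{E_v}/E_v)\leq 2$. Local Tate duality with coefficients in the Noetherian ring $\RR$ (Nekov\'a\v{r} \cite[\S5]{Nek06}; the underlying input is the invariant isomorphism $H^2_{\Iw}(K_v, \Z_p(1))\simeq\RR$, reflecting that each group algebra $\Z_p[\Gal(F_v/E_v)]$ is a symmetric $\Z_p$-algebra under its trace form) provides, for any bounded complex $\mathcal M$ of $\RR[\Gal(\ol{E_v}/E_v)]$-modules that is finitely generated projective over $\RR$ in each degree, a natural quasi-isomorphism
\[
\RHom_{\RR}(\RG(E_v, \mathcal M), \RR) \simeq \RG\bigl(E_v, \RHom_{\RR}(\mathcal M, \RR)(1)\bigr)[2].
\]
Applying this with $\mathcal M = T\otimes_{\Z_p}\RR$ and observing that $\RHom_{\RR}(T\otimes_{\Z_p}\RR, \RR)(1)$ is $T^{\sharp}(1)\otimes_{\Z_p}\RR$ with its Galois action on the free factor differing from the one in the definition of $\RG_{\Iw}(K_v, T^{\sharp}(1))$ exactly by $\iota$, one reads off assertion (2). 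The only genuine work is the identification of these two $\RR[\Gal]$-module structures, and this discrepancy is what produces the superscript $\iota$ in the statement.

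For (1), I would apply the contravariant functor $\RHom_{\RR}(-, \RR)^{\iota}[3]$ to the defining triangle \eqref{eq:16} of $C_{\Sigma, S}(T)$. Part (2) identifies the duals of the local terms $\RG_{\Iw}(K_v, T)$, $v\in S$, with $\RG_{\Iw}(K_v, T^{\sharp}(1))[5]$, and the Iwasawa-theoretic global Artin--Verdier (Poitou--Tate) duality (again \cite{Nek06}) identifies the dual of $\RG_{\Iw}(K_{\Sigma}/K, T)$ with $C_{\Sigma, \Sigma_f}(T^{\sharp}(1))[6]$: this is the $\RR$-linear avatar of the isomorphism $C_{\Sigma, \Sigma_f}(T)\simeq\RG(K_{\Sigma}/K, T^{\dual}(1))^{\dual}[-3]$ recorded after \eqref{eq:17}, and it uses $\mu_{p^{\infty}}\subset K$ so that the relevant top compactly supported cohomology of the Tate twist is free of rank one over $\RR$. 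Rotating the resulting exact triangle gives
\[
C_{\Sigma, \Sigma_f}(T^{\sharp}(1)) \to \RHom_{\RR}(C_{\Sigma, S}(T), \RR)^{\iota}[-3] \to \bigoplus_{v\in S}\RG_{\Iw}(K_v, T^{\sharp}(1)) \to C_{\Sigma, \Sigma_f}(T^{\sharp}(1))[1].
\]
On the other hand, the octahedral axiom applied to the two instances of \eqref{eq:16} for the subsets $\Sigma_f$ and $\Sigma_f\setminus S$ of $\Sigma_f$ yields the localization triangle
\[
C_{\Sigma, \Sigma_f}(T^{\sharp}(1)) \to C_{\Sigma, \Sigma_f\setminus S}(T^{\sharp}(1)) \to \bigoplus_{v\in S}\RG_{\Iw}(K_v, T^{\sharp}(1)) \to C_{\Sigma, \Sigma_f}(T^{\sharp}(1))[1].
\]
Verifying that the first and third maps of these two triangles coincide under the duality identifications — which comes down to the compatibility of local and global duality with the restriction morphisms in \eqref{eq:16} — the five lemma forces the middle terms to agree, equivalently $\RHom_{\RR}(C_{\Sigma, S}(T), \RR)^{\iota}\simeq C_{\Sigma, \Sigma_f\setminus S}(T^{\sharp}(1))[3]$, which is the claim.

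The hard part is not conceptual — local Tate duality and global Artin--Verdier duality over $\RR$ are in \cite{Nek06}, and everything here is essentially the ``slight modification'' alluded to at the start of this section — but organizational: keeping the shifts $[2]$, $[3]$, the Tate twist $(1)$, and the involution $\iota$ consistent throughout, and, more substantively, checking that the quasi-isomorphisms furnished by duality are compatible with the connecting morphisms of \eqref{eq:16} and \eqref{eq:17}, so that the octahedral and five-lemma comparisons are legitimate rather than merely producing abstract isomorphisms of objects. One should also record that the hypotheses of the cited duality theorems are met here: $\cd_p\leq 2$ (Proposition \ref{prop:a23}), and the fact that $\RR = \Z_p[[\Gal(K/E)]]$ is finite free over the regular ring $\Z_p[[\Z_p^r]]$, with the trace form making it Gorenstein, which is what lets the dual be described in a single cohomological degree.
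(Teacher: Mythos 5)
The paper's own ``proof'' of this proposition is a bare citation to \cite[Proposition 2.1]{BCG+b}; no argument is reproduced. Your reconstruction from Nekov\'a\v{r}'s local and global duality theorems is therefore doing real work, and it is essentially the argument behind the cited result: prove the local statement (2) directly from the $\RR$-linear form of local Tate duality (keeping track of how $\RHom_{\RR}(T\otimes_{\Z_p}\RR,\RR)$ recovers $T^{\sharp}\otimes_{\Z_p}\RR$ up to the $\iota$-twist of the Galois action built into the definition of the Iwasawa complexes); establish the $S=\emptyset$ case of (1) as the global Poitou--Tate/Artin--Verdier duality over $\RR$; and then propagate to arbitrary $S$ by applying $\RHom_{\RR}(-,\RR)^{\iota}$ to the defining triangle \eqref{eq:16} and comparing with the octahedral localization triangle
\[
C_{\Sigma, \Sigma_f}(T^{\sharp}(1)) \to C_{\Sigma, \Sigma_f\setminus S}(T^{\sharp}(1)) \to \bigoplus_{v\in S}\RG_{\Iw}(K_v, T^{\sharp}(1)) \to C_{\Sigma, \Sigma_f}(T^{\sharp}(1))[1].
\]
Your shift bookkeeping ($[2]$ local, $[3]$ global, passing through $[5]$ and $[6]$ after the auxiliary $[3]$, then back down by $[-6]$) is consistent, and the key remaining verification --- that the duality quasi-isomorphisms commute with the restriction/connecting maps in \eqref{eq:16}, which is what makes the five-lemma comparison of triangles legitimate rather than an identification of objects with the same outer vertices --- is exactly the substantive compatibility one must check; you flag it but do not carry it out, which is acceptable in a sketch, though in a triangulated category this is precisely the step where a would-be ``five lemma'' argument can silently fail if one only checks commutativity of one of the two relevant squares. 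Overall your route is correct and fills in what the paper leaves implicit.
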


\begin{proof}
See \cite[Proposition 2.1]{BCG+b}.
\end{proof}

\begin{prop}\label{prop:18}
If $S = \emptyset$, we have $C_{\Sigma, \emptyset}(T) \in \De^{[0, 2]}(\RR)$.
If $S = \Sigma_f$, we have $C_{\Sigma, \Sigma_f}(T) \in \De^{[1,3]}(\RR)$.
If $\emptyset \subsetneqq S \subsetneqq \Sigma_f$, we have $C_{\Sigma, S}(T) \in \De^{[1, 2]}(\RR)$.
\end{prop}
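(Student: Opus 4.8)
The plan is to determine, for each subset $S \subset \Sigma_f$, the range of degrees in which $C_{\Sigma, S}(T)$ can be represented by a bounded complex of finitely generated projectives, using the defining triangles \eqref{eq:16} and \eqref{eq:17} together with Proposition \ref{prop:a23}. The membership in $\DeP(\RR)$ is automatic (all three terms of each triangle are perfect), so the real content is to pin down the cohomological amplitude. The three cases $S = \emptyset$, $S = \Sigma_f$, and $\emptyset \subsetneqq S \subsetneqq \Sigma_f$ require slightly different inputs, so I would treat them in turn.

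First, the case $S = \emptyset$ is immediate from the quasi-isomorphism $C_{\Sigma, \emptyset}(T) \simeq \RG_{\Iw}(K_\Sigma/K, T)$ noted just after Definition \ref{defn:32}, combined with $\RG_{\Iw}(K_\Sigma/K, T) \in \De^{[0,2]}(\RR)$ from Proposition \ref{prop:a23}. Next, for $S = \Sigma_f$, I would use the identification $C_{\Sigma, \Sigma_f}(T) \simeq \RG(K_\Sigma/K, T^\dual(1))^\dual[-3]$, again recorded after Definition \ref{defn:32}; since $\RG(K_\Sigma/K, -)$ lives in $\De^{[0,2]}(\RR)$ (this is the non-Iwasawa analogue, which follows from $\cd_p \Gal(K_\Sigma/E) \leq 2$ exactly as in the proof of Proposition \ref{prop:a23}, or can be deduced by descent), applying Pontryagin dual and the shift by $-3$ turns amplitude $[0,2]$ into amplitude $[1,3]$. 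One must be slightly careful that the Pontryagin dual of a perfect complex of $\RR$-modules concentrated in degrees $[0,2]$, after the involution-twisted $\RR$-structure, again lies in $\De^{[0,2]}$ after reindexing; this is where I would invoke Proposition \ref{prop:44}(1) with $S = \Sigma_f$, which gives $\RHom_\RR(C_{\Sigma, \Sigma_f}(T), \RR)^\iota \simeq C_{\Sigma, \emptyset}(T^\sharp(1))[3] \in \De^{[-3,-1]}(\RR)$, and then use that a perfect complex whose $\RR$-linear dual sits in degrees $[-3,-1]$ must itself sit in degrees $[1,3]$.

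For the intermediate case $\emptyset \subsetneqq S \subsetneqq \Sigma_f$, I would argue from the long exact cohomology sequence of the triangle \eqref{eq:16}. The outer terms $\RG_{\Iw}(K_\Sigma/K, T)$ and $\bigoplus_{v \in S} \RG_{\Iw}(K_v, T)$ both lie in $\De^{[0,2]}$, so a priori $C_{\Sigma, S}(T) \in \De^{[-1,2]}$; the point is to rule out $H^{-1}$ and $H^0$. Vanishing of $H^{-1}(C_{\Sigma,S}(T))$ and $H^0(C_{\Sigma,S}(T))$ will follow from the fact that the restriction map $H^0_{\Iw}(K_\Sigma/K, T) \to \bigoplus_{v \in S} H^0_{\Iw}(K_v, T)$ is injective — indeed $H^0_{\Iw}(K_\Sigma/K, T) = H^0_{\Iw}(K_v, T) = 0$ for $T = \Z_p(1)$, or more generally the $H^0$ is a submodule of the local $H^0$ at any single place, so injectivity is clear once $S \neq \emptyset$. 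Symmetrically, to get the upper bound $\leq 2$ one uses the companion triangle \eqref{eq:17}, whose first two terms $\bigoplus_{v \in \Sigma_f \setminus S} \RG_{\Iw}(K_v, T)$ and $\RG(K_\Sigma/K, T^\dual(1))^\dual[-2]$ lie in $\De^{[0,2]}$ and $\De^{[2,4]}$ respectively, giving $C_{\Sigma,S}(T) \in \De^{[0,3]}$; intersecting with the previous bound yields $\De^{[1,2]}$, provided $\Sigma_f \setminus S \neq \emptyset$, which holds since $S \subsetneqq \Sigma_f$. Here I would need that the map $H^2$ of the second triangle is appropriately surjective/injective so that no degree-$3$ cohomology survives and no degree-$0$ cohomology appears — concretely, that $H^0(\RG(K_\Sigma/K, T^\dual(1))^\dual[-2]) = H^2(\RG(K_\Sigma/K, T^\dual(1)))^\dual$ is hit, which again reduces to a standard surjectivity/injectivity of a restriction or corestriction map.

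The main obstacle I anticipate is the bookkeeping around the two duality triangles and the involution $\iota$: one must make sure the degree shifts and the passage to Pontryagin/$\RR$-linear duals are applied consistently, and that the cohomological non-vanishing statements (injectivity of the relevant $H^0$ restriction map when $S \neq \emptyset$, and surjectivity of the relevant $H^2$ map when $S \neq \Sigma_f$) are genuinely supplied by the hypotheses $\emptyset \subsetneqq S \subsetneqq \Sigma_f$ rather than silently assumed. These non-vanishing facts are standard Galois-cohomology inputs (they are implicit in \cite{BCG+}, \cite{BCG+b}, and Nekov\'a\v{r}'s framework \cite{Nek06}), so the proof should be short once the amplitude estimates from \eqref{eq:16}, \eqref{eq:17}, and Proposition \ref{prop:a23} are combined correctly.
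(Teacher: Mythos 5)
Your treatment of $S=\emptyset$ and $S=\Sigma_f$ is essentially the paper's (the $S=\Sigma_f$ case should go directly through Proposition~\ref{prop:44}(1) rather than the detour through Pontryagin duals of $\RG(K_\Sigma/K, T^\dual(1))$, which is a complex of discrete modules and not a priori in $\DeP(\RR)$; but you arrive at the right tool). The intermediate case, however, has a genuine gap. First, the degree bookkeeping is off: since $C_{\Sigma,S}(T)$ is the \emph{first} term of the triangle \eqref{eq:16}, it is the fibre $\Cone(\cdot)[-1]$ of a map between complexes in $\De^{[0,2]}$, which lands in $\De^{[0,3]}$, not $\De^{[-1,2]}$; likewise $\RG(K_\Sigma/K,T^\dual(1))^\dual[-2]$ has cohomology concentrated in $[0,2]$, not $[2,4]$. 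Second, and more seriously, your strategy of proving $H^0(C_{\Sigma,S}(T))=0$ cannot produce the lower bound $\De^{[1,\,\cdot\,]}$. For a perfect complex $C\in\De^{[a,b]}(\RR)$, vanishing of the \emph{top} cohomology $H^b(C)$ does allow one to drop to $\De^{[a,b-1]}(\RR)$ (split off the surjection $C^{b-1}\twoheadrightarrow C^b$), but vanishing of the \emph{bottom} cohomology $H^a(C)$ does not allow one to raise to $\De^{[a+1,b]}(\RR)$: the complex $[\Z\xrightarrow{p}\Z]$ in degrees $[0,1]$ has $H^0=0$ yet is not in $\De^{[1,1]}(\Z)$. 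The paper's argument exploits this asymmetry: it shows $C_{\Sigma,S}(T)\in\De^{[0,3]}(\RR)$ in general, cuts the top to get $\De^{[0,2]}(\RR)$ when $S\subsetneqq\Sigma_f$ using \eqref{eq:17}, and gets the lower bound by applying the same truncation to the $\RR$-linear dual: by Proposition~\ref{prop:44}(1), $\RHom_{\RR}(C_{\Sigma,S}(T),\RR)^\iota\simeq C_{\Sigma,\Sigma_f\setminus S}(T^\sharp(1))[3]\in\De^{[-3,-1]}(\RR)$ whenever $S\neq\emptyset$, whence $C_{\Sigma,S}(T)\in\De^{[1,3]}(\RR)$. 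Intersecting the two ranges gives $\De^{[1,2]}(\RR)$ for $\emptyset\subsetneqq S\subsetneqq\Sigma_f$. You should replace the ``rule out $H^0$'' step by this duality argument, which you already invoke in the $S=\Sigma_f$ case; it applies uniformly whenever $S\neq\emptyset$.
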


\begin{proof}
By Proposition \ref{prop:a23}, we have $C_{\Sigma, S}(T) \in \De^{[0, 3]}(\RR)$ in general.
If $S \subsetneqq \Sigma_f$, then \eqref{eq:17} implies that $H^3(C_{\Sigma, S}(T)) = 0$, so we have $C_{\Sigma, S}(T) \in \De^{[0, 2]}(\RR)$.
If $S \neq \emptyset$, then Proposition \ref{prop:44}(1) and $C_{\Sigma, \Sigma_f \setminus S}(T^{\sharp}(1)) \in \De^{[0, 2]}(\RR)$ imply that $C_{\Sigma, S}(T) \in \De^{[1, 3]}(\RR)$.
Then the proposition follows.
\end{proof}

\subsection{Applications to $T = \Z_p(1)$}\label{subsec:60}

We specialize the facts in \S \ref{subsec:59} to $T = \Z_p(1)$.

For each finite prime $v$ of $E$, we put $K_v = K \otimes_E E_v$.
Then we define $\RR$-modules $D_v(K)$ and $Z_v(K)$ by
\[
D_{v}(K) = H^1(K_{v}, \Q_p/\Z_p)^{\dual}
\]
and
\[
Z_{v}(K) = H^0(K_{v}, \Q_p/\Z_p)^{\dual} \simeq \Z_p[[\Gal(K/E)/G_{v}(K/E)]],
\]
where $G_{v}(K/E)$ denotes the decomposition group of $K/E$ at $v$.
For each finite set $S$ of finite primes of $E$, we also put
\[
D_{S}(K) = \bigoplus_{v \in S} D_{v}(K),
\qquad Z_{S}(K) = \bigoplus_{v \in S} Z_{v}(K).
\]

For a finite set $S$ of finite primes of $E$, let $\XX_{S}(K)$ denote the Iwasawa module defined as the Galois group over $K$ of the maximal abelian pro-$p$ extension of $K$ which is totally split outside $S$.
It is known that $\XX_S(K)$ is a finitely generated $\RR$-module.
Also, we put $\XX(K) = \XX_{\emptyset}(K)$.

Note that, concerning the Iwasawa modules, we are considering the ``totally split'' condition rather than the ``unramified'' condition.
Since $K$ contains the cyclotomic $\Z_p$-extension of $E$, the residue degree of $K/E$ at any non-$p$-adic prime is divisible by $p^{\infty}$.
Moreover, if $E$ is a $p$-ordinary CM-field and $K$ contains $\tilde{E}$, then the residue degree at any $p$-adic prime is also divisible by $p^{\infty}$ by \cite[Lemma 3.1(ii)]{BCG+b}.
Therefore, the notation $X_S(K)$ is compatible with that in the introduction.

Now we review descriptions of the local and global cohomology groups (see, e.g., \cite[\S 2]{BCG+b}).
We put
\[
C_v^{\loc} = \RG_{\Iw}(K_v, \Z_p(1))
\]
for each finite prime $v$ of $E$ and also put $C_S^{\loc} = \bigoplus_{v \in S} C_v^{\loc}$.
Then by the local duality, the local cohomology groups are described as
\begin{equation}\label{eq:42}
H^i(C_v^{\loc}) \simeq
\begin{cases}
	D_{v}(K) & (i=1)\\
	Z_{v}(K) & (i = 2)\\
	0 & (i \neq 1, 2).
\end{cases}
\end{equation}

For each subset $S \subset \Sigma_f$, we put
\[
C_{\Sigma, S} = C_{\Sigma, S}(\Z_p(1)).
\]
Then by the Poitou-Tate duality, we obtain the following description.
For $S = \Sigma_f$, we have
\begin{equation}\label{eq:75}
H^i(C_{\Sigma, \Sigma_f}) \simeq 
\begin{cases}
	\XX_{\Sigma_f}(K) & (i=2)\\
	\Z_p & (i = 3)\\
	0 & (i \neq 2, 3).
\end{cases}
\end{equation}
If $S \subsetneqq \Sigma_f$, then we have $H^i(C_{\Sigma, S}) = 0$ for $i \neq 1, 2$ and also exact sequences 
\begin{equation}\label{eq:76}
0 \to H^1(C_{\Sigma, S}) \to H^1_{\Iw}(K_{\Sigma}/K, \Z_p(1)) 
\to \bigoplus_{v \in S} H^1_{\Iw}(K_{v}, \Z_p(1)) 
\end{equation}
and
\begin{equation}\label{eq:77}
0 \to \XX_{S}(K) \to H^2(C_{\Sigma, S}) \to Z_{\Sigma_f \setminus S}^0(K) \to 0.
\end{equation}
Here, when $S$ is a nonempty finite set of finite primes of $E$, we put
\[
Z_S^0(K) = \Ker(Z_S(K) \to \Z_p),
\]
the augmentation kernel.
In particular, \eqref{eq:77} for $S = \emptyset$ implies
\[
0 \to \XX(K) \to H^2_{\Iw}(K_{\Sigma}/K, \Z_p)(1) \to Z_{\Sigma_f}^0(K) \to 0
\]
as mentioned just after Theorem \ref{thm:102}.

Finally we record the results of Proposition \ref{prop:44}:
because of $\Z_p(1)^{\sharp}(1) \simeq \Z_p \simeq (\Z_p(1))(-1)$,
we have isomorphisms
\begin{equation}\label{eq:93}
\RHom_{\RR}(C_{\Sigma, S}, \RR)^{\iota}(1)[-3] \simeq C_{\Sigma, \Sigma_f \setminus S}
\end{equation}
and
\begin{equation}\label{eq:94}
\RHom_{\RR}(C_v^{\loc}, \RR)^{\iota}(1)[-2] \simeq C_v^{\loc}.
\end{equation}

\section{Algebraic $p$-adic $L$-functions associated to CM-fields}\label{sec:23}

In the rest of this paper, we consider the same situation as \S \ref{subsec:24}:
$E$ is a CM-field that satisfies the $p$-ordinary condition and $K$ is an abelian extension of $E$ which is a finite extension of $\tilde{E}(\mu_p)$.
We put $2d = [E: \Q]$.
We also assume the following.

\begin{ass}\label{ass:41}
For each CM-type $\cS$ of $E$, the module $\XX_{\cS}(K)$ is torsion over $\RR$.
\end{ass}

\begin{rem}\label{rem:92}
Assumption \ref{ass:41} is a fundamental property to study Iwasawa theory for CM-fields.
In fact, it is claimed to be true in general by \cite[Theorem 1.2.2]{HT94}.
However, the proof does not seem complete because the auxiliary algebraic lemma \cite[Lemma 1.2.4]{HT94}, which is stated without proof, has counter-examples when the closed subgroup $H$ has torsion.

On the other hand, Assumption \ref{ass:41} holds when either $K$ is a $\Z_p^r$-extension of a CM-field or $E$ is an imaginary quadratic field, for the following reasons.
When $K$ is a $\Z_p^r$-extension of CM-field, the proof of \cite[Theorem 1.2.2]{HT94} is valid because we only have to apply \cite[Lemma 1.2.4]{HT94} for $H$ without torsion.
When $E$ is an imaginary quadratic field, the proof is easier since the $\pe$-adic Leopoldt conjecture is known to be true for finite abelian extensions of $E$.
\end{rem}

Let $\Sigma$ be a set satisfying \eqref{eq:60}.
The Euler characteristics of the arithmetic complexes defined in \S \ref{subsec:60} are computed as follows.
For each prime $\pe \in S_p(E)$, we put $\deg(\pe) = [E_{\pe}:\Q_p]$.

\begin{lem}\label{lem:42}
For a subset $S \subset \Sigma_f$, 
we have
\[
\chi_{\RR}(C_{\Sigma, S}) = d - \sum_{\pe \in S \cap S_p(E)} \deg(\pe).
\]
\end{lem}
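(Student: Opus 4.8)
The plan is to compute the Euler characteristic by working degree-term by degree-term through the defining triangle \eqref{eq:16}, using the additivity of $\chi_{\RR}$ on distinguished triangles together with the local computation for the terms $\RG_{\Iw}(K_v, \Z_p(1))$. More precisely, since $\chi_{\RR}$ is additive, from the triangle
\[
C_{\Sigma, S} \to \RG_{\Iw}(K_{\Sigma}/K, \Z_p(1)) \to \bigoplus_{v \in S} C_v^{\loc}
\]
we get $\chi_{\RR}(C_{\Sigma, S}) = \chi_{\RR}(C_{\Sigma, \emptyset}) - \sum_{v \in S} \chi_{\RR}(C_v^{\loc})$, so it suffices to compute the two pieces separately.

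First I would handle the local terms. For $v \nmid p$ the complex $\RG_{\Iw}(K_v, \Z_p(1))$ is acyclic after inverting $p$ (the local Galois cohomology of $\Q_p/\Z_p$ over a field whose residue degree is divisible by $p^\infty$ is torsion, and by \eqref{eq:42} the cohomology $D_v(K)$, $Z_v(K)$ are torsion $\RR$-modules), so $\chi_{\RR}(C_v^{\loc}) = 0$ for $v \notin S_p(E)$. For $v = \pe \in S_p(E)$, the local Euler characteristic formula gives that $\RG_{\Iw}(K_\pe, \Z_p(1))$ has Euler characteristic $-[E_\pe : \Q_p] = -\deg(\pe)$; concretely one can see this by taking a representing complex $[C^1 \to C^2]$ (using \eqref{eq:42}, which shows $H^0 = 0$), for which the generic rank of $H^1$ minus that of $H^2$ equals $\deg(\pe)$ by the local Euler characteristic formula over $Q(\RR)$, hence $\chi_{\RR}(C_\pe^{\loc}) = -\deg(\pe)$ with my sign convention ($\chi_{\RR}(C) = \sum_i (-1)^{i-1}\rank(C^i)$). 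Summing, $\sum_{v \in S} \chi_{\RR}(C_v^{\loc}) = -\sum_{\pe \in S \cap S_p(E)} \deg(\pe)$.

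Next I would compute $\chi_{\RR}(C_{\Sigma, \emptyset}) = \chi_{\RR}(\RG_{\Iw}(K_{\Sigma}/K, \Z_p(1)))$. By \eqref{eq:75} and \eqref{eq:77} (applied with $S = \Sigma_f$ and $S = \emptyset$ respectively), or directly from the global Euler characteristic formula for $\RG(K_\Sigma/E, \Z_p(1) \otimes_{\Z_p} \RR)$, the generic rank over $Q(\RR)$ is governed by the classical Tate global Euler characteristic: the contribution is $\sum_{v \in S_\infty(E)}(\text{something}) - [E:\Q]\cdot\rank(\Z_p(1)) = \ldots$, and after the dust settles one obtains $\chi_{\RR}(C_{\Sigma, \emptyset}) = d$. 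The cleanest route is probably to localize at the generic point, note that $Q(\RR) \otimesL_\RR C_{\Sigma,\emptyset} \in \De^{[1,2]}(Q(\RR))$ by Proposition \ref{prop:18} combined with torsionness of $H^3$ (which is $\Z_p$ up to twist, hence torsion) and of $H^2 = \XX_{\Sigma_f}(K)$-related modules, and then the global Euler characteristic formula for $\Z_p(1)$ over the CM-field gives exactly $d = [E:\Q]/2$ as the difference of generic ranks. Combining the two computations yields $\chi_{\RR}(C_{\Sigma, S}) = d - \sum_{\pe \in S \cap S_p(E)} \deg(\pe)$.

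The main obstacle I anticipate is pinning down the constant $d$ for $\chi_{\RR}(C_{\Sigma,\emptyset})$ cleanly, and in particular being careful with the twist by $\iota$ and the degree/sign conventions: the representation is $\Z_p(1) \otimes_{\Z_p} \RR$ with the $\iota$-twisted action, and one must verify that the Euler characteristic is insensitive to this twist (it is, since $\iota$ is a ring automorphism of $\RR$) and that the various $H^i$ that one is discarding as ``torsion'' are genuinely torsion over $\RR$ — this uses Assumption \ref{ass:41} for the $H^2$ piece and is elementary for $H^3 \simeq \Z_p$. Everything else is bookkeeping with additivity of $\chi_{\RR}$ and the local/global Euler characteristic formulas, which hold rationally and therefore suffice since $\chi_{\RR}$ only sees generic ranks.
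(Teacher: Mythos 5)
Your approach is the same as the paper's: use additivity of $\chi_{\RR}$ along the defining triangle \eqref{eq:16} and then compute the global and local Euler characteristics via the standard local and global Euler characteristic formulas (the paper cites NSW (7.3.1) and (8.7.4) for precisely this). The paper's proof is terser, simply listing the three values $\chi_{\RR}(\RG_{\Iw}(K_{\Sigma}/K, \Z_p(1))) = d$, $\chi_{\RR}(\RG_{\Iw}(K_\pe, \Z_p(1))) = \deg(\pe)$ for $\pe \in S_p(E)$, and $\chi_{\RR}(\RG_{\Iw}(K_v, \Z_p(1))) = 0$ for $v \nmid p$, and invoking additivity.

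However, your intermediate computation has a sign slip that contradicts your own (correct) observation and your own (correct) conclusion. You note that $\rank_{Q(\RR)} H^1 - \rank_{Q(\RR)} H^2 = \deg(\pe)$, and with the paper's convention $\chi_{\RR}(C) = \sum_i (-1)^{i-1}\rank(C^i)$ (a representing complex sits in degrees $1, 2$) this \emph{is} $\chi_{\RR}(C_\pe^{\loc})$, so $\chi_{\RR}(C_\pe^{\loc}) = +\deg(\pe)$, not $-\deg(\pe)$ as you write. (The value $-\deg(\pe)$ is the answer with the usual convention $\sum_i (-1)^i$; the paper's convention flips the sign.) As written, your identities $\sum_{v\in S}\chi_{\RR}(C_v^{\loc}) = -\sum\deg(\pe)$ and $\chi_{\RR}(C_{\Sigma,S}) = \chi_{\RR}(C_{\Sigma,\emptyset}) - \sum_{v\in S}\chi_{\RR}(C_v^{\loc})$ give $d + \sum\deg(\pe)$, not the stated $d - \sum\deg(\pe)$. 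With the sign corrected the computation is right.

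Two smaller points. First, you do not need Assumption \ref{ass:41} or torsionness of any cohomology group for this lemma: $\chi_{\RR}$ is determined by the ranks of the terms of a representing projective complex, and the local/global Euler characteristic formulas give those directly; the torsionness discussion in your last paragraph is a distraction and in fact $H^2(C_{\Sigma, \emptyset})$ is not one of the modules $X_\cS(K)$ that Assumption \ref{ass:41} is about. Second, \eqref{eq:75} describes $H^i(C_{\Sigma, \Sigma_f})$ and \eqref{eq:77} describes $H^2(C_{\Sigma, S})$ for $S \subsetneqq \Sigma_f$; neither is a description of the cohomology of $C_{\Sigma, \emptyset}$ in the way your parenthetical suggests (for instance $H^3(C_{\Sigma, \emptyset}) = 0$, not $\Z_p$). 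Citing the global Euler characteristic formula directly, as you also suggest and as the paper does, is cleaner.
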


\begin{proof}
By the global and local Euler characteristic formulas (see, e.g., \cite[(7.3.1), (8.7.4)]{NSW08}), we obtain the following.
\begin{itemize}
\item We have $\chi_{\RR}(\RG_{\Iw}(K_{\Sigma}/K, \Z_p(1))) = d$.
\item For each $\pe \in S_p(E)$, we have $\chi_{\RR}(\RG_{\Iw}(K_{\pe}, \Z_p(1))) = \deg(\pe)$.
\item For each finite prime $v$ of $E$ not lying above $p$, we have $\chi_{\RR}(\RG_{\Iw}(K_{v}, \Z_p(1))) = 0$.
\end{itemize}
Then the lemma follows from the definition of $C_{\Sigma, S}$.
\end{proof}

\begin{lem}\label{lem:37}
Let $S \subset \Sigma_f$ be a subset such that $S \cap S_p(E)$ is a CM-type for $E$.
Then we have $C_{\Sigma, S} \in \De^{[1, 2]}(\RR)$, 
$H^1(C_{\Sigma, S}) = 0$, and $H^2(C_{\Sigma, S})$ is torsion.
\end{lem}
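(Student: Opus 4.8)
The plan is to deduce all three assertions from the structural results of \S\ref{sec:14} together with Assumption \ref{ass:41}, the main point being the duality \eqref{eq:93}. Throughout write $\cS = S \cap S_p(E)$, which is a CM-type by hypothesis. First I would record the elementary observation that a CM-type of $E$ is a \emph{nonempty proper} subset of $S_p(E)$: if $\cS = \emptyset$ then $S_p(E) = \ol{\cS} = \emptyset$, impossible since $E/\Q$ has a prime above $p$; and if $\cS = S_p(E)$ then $\ol{\cS} = \ol{S_p(E)} = S_p(E)$ is not disjoint from $\cS$. Since $\cS \subset S \subset \Sigma_f$ and $S_p(E) \subset \Sigma_f$ by \eqref{eq:60}, this forces $\emptyset \subsetneq S \subsetneq \Sigma_f$, so Proposition \ref{prop:18} immediately gives $C_{\Sigma, S} \in \De^{[1, 2]}(\RR)$. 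I would also note that $(\Sigma_f \setminus S) \cap S_p(E) = S_p(E) \setminus \cS = \ol{\cS}$ is again a CM-type, so anything proved for $S$ applies verbatim to $\Sigma_f \setminus S$; and, as a consistency check, Lemma \ref{lem:42} together with $\sum_{\pe \in \cS} \deg(\pe) = \sum_{\pe \in \ol{\cS}} \deg(\pe) = d$ yields $\chi_{\RR}(C_{\Sigma, S}) = 0$, consistent with $H^1 = 0$ and $H^2$ torsion.

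Next I would show that $H^2(C_{\Sigma, S})$ is $\RR$-torsion. The starting point is that each $Z_v(K) \simeq \Z_p[[\Gal(K/E)/G_v(K/E)]]$ is $\RR$-torsion, because the decomposition group $G_v(K/E)$ has positive $\Z_p$-rank: the residue degree of $K/E$ at $v$ is divisible by $p^\infty$, for $v \nmid p$ since $K$ contains the cyclotomic $\Z_p$-extension of $E$, and for $v \mid p$ by \cite[Lemma 3.1(ii)]{BCG+b}. Applying \eqref{eq:77} to $\cS$ (legitimate since $\cS \subsetneq \Sigma_f$), the module $H^2(C_{\Sigma, \cS})$ is an extension of the torsion module $Z^0_{\Sigma_f \setminus \cS}(K)$ by $\XX_{\cS}(K)$, which is torsion by Assumption \ref{ass:41}; hence $H^2(C_{\Sigma, \cS})$ is torsion. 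Then, using that $\cS \subset S$, I would invoke the exact triangle obtained from Definition \ref{defn:32} by the octahedral axiom,
\[
C_{\Sigma, S} \to C_{\Sigma, \cS} \to \bigoplus_{v \in S \setminus \cS} C_v^{\loc} \to C_{\Sigma, S}[1],
\]
where $S \setminus \cS = S \setminus S_p(E)$ consists of primes $v \nmid p$. For such $v$ we have $C_v^{\loc} \in \De^{[0, 2]}(\RR)$ with $H^0(C_v^{\loc}) = 0$ and $H^2(C_v^{\loc}) = Z_v(K)$ torsion, and since the local Euler characteristic computed in the proof of Lemma \ref{lem:42} is $\chi_{\RR}(C_v^{\loc}) = 0$, also $H^1(C_v^{\loc}) = D_v(K)$ is torsion. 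The long exact cohomology sequence then exhibits $H^2(C_{\Sigma, S})$ as an extension of a submodule of $H^2(C_{\Sigma, \cS})$ by a quotient of $\bigoplus_v H^1(C_v^{\loc})$, both torsion; hence $H^2(C_{\Sigma, S})$ is torsion. Running the same argument for $\Sigma_f \setminus S$ (and $\ol{\cS}$) shows $H^2(C_{\Sigma, \Sigma_f \setminus S})$ is torsion as well.

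Finally, to get $H^1(C_{\Sigma, S}) = 0$ I would apply the duality \eqref{eq:93} with $\Sigma_f \setminus S$ in place of $S$, namely $C_{\Sigma, S} \simeq \RHom_{\RR}(C_{\Sigma, \Sigma_f \setminus S}, \RR)^{\iota}(1)[-3]$. Taking $H^1$ and using that $(-)^\iota$ and the twist are exact gives $H^1(C_{\Sigma, S}) \simeq H^{-2}\bigl(\RHom_{\RR}(C_{\Sigma, \Sigma_f \setminus S}, \RR)\bigr)^{\iota}(1)$. Representing $C_{\Sigma, \Sigma_f \setminus S} \in \De^{[1, 2]}(\RR)$ by a complex $[C^1 \to C^2]$ of finitely generated projectives in degrees $1, 2$, its dual is $[(C^2)^* \to (C^1)^*]$ in degrees $-2, -1$, and left exactness of $\Hom_{\RR}(-, \RR)$ applied to $C^1 \to C^2 \to H^2(C_{\Sigma, \Sigma_f \setminus S}) \to 0$ identifies $H^{-2}$ of the dual with $\Hom_{\RR}(H^2(C_{\Sigma, \Sigma_f \setminus S}), \RR)$. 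As $H^2(C_{\Sigma, \Sigma_f \setminus S})$ is $\RR$-torsion and $\RR$ is torsion-free over itself, this $\Hom$ vanishes, so $H^1(C_{\Sigma, S}) = 0$. I expect the only real delicacy to be bookkeeping rather than depth: getting the shift, the $\iota$-twist, and the direction of \eqref{eq:93} exactly right, and making sure the two torsion inputs — the residue-degree fact forcing each $Z_v(K)$ torsion (including the $p$-adic $v \in \ol{\cS}$) and Assumption \ref{ass:41} for $\XX_{\cS}(K)$ — really cover every prime that enters the arguments above.
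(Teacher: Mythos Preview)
Your proof is correct. The differences from the paper's argument are worth noting, though. The paper proves $H^2(C_{\Sigma,S})$ is torsion in one step by applying \eqref{eq:77} directly with $S$ (tacitly using that $\XX_S(K)$ is torsion once $\XX_{\cS}(K)$ is, via the torsion $D_v(K)$ for $v\nmid p$ that you spell out); you instead prove it first for $\cS$ and then transport to $S$ via the triangle $C_{\Sigma,S}\to C_{\Sigma,\cS}\to\bigoplus C_v^{\loc}$, which is correct but a detour. More interestingly, for $H^1(C_{\Sigma,S})=0$ the paper simply uses $C_{\Sigma,S}\in\De^{[1,2]}(\RR)$ together with $\chi_{\RR}(C_{\Sigma,S})=0$: representing by $[C^1\to C^2]$, the Euler characteristic forces $\rank H^1=\rank H^2=0$, and $H^1$ is torsion-free as a submodule of the projective $C^1$, hence zero. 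You instead invoke the duality \eqref{eq:93} and compute $H^1(C_{\Sigma,S})\simeq\Hom_{\RR}(H^2(C_{\Sigma,\Sigma_f\setminus S}),\RR)^{\iota}(1)=0$, which also works but costs you the extra step of proving $H^2(C_{\Sigma,\Sigma_f\setminus S})$ is torsion as well. The Euler-characteristic route is shorter and uses only $S$; your route trades that for exercising the duality machinery, which is harmless here but unnecessary.
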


\begin{proof}
By Proposition \ref{prop:18}, we have $C_{\Sigma, S} \in \De^{[1, 2]}(\RR)$.
Since $\XX_S(K)$ is torsion by Assumption \ref{ass:41}, the exact sequence \eqref{eq:77} implies that $H^2(C_{\Sigma, S})$ is torsion.
Then we also deduce $H^1(C_{\Sigma, S}) = 0$ from $C_{\Sigma, S} \in \De^{[1, 2]}(\RR)$ and $\chi_{\RR}(C_{\Sigma, S}) = 0$ by Lemma \ref{lem:42}.
This completes the proof.
\end{proof}

In general, for a complex $C \in \DeP(\RR)$ whose cohomology groups are all torsion, we have a trivialization map
\[
\iota_C: \Det_{\RR}^{-1}(C) \hookrightarrow \Det_{Q(\RR)}^{-1}(Q(\RR) \otimesL_{\RR} C) \simeq Q(\RR),
\]
where the isomorphism follows from the assumption that $Q(\RR) \otimesL_{\RR} C$ is acyclic.
We put 
\[
\iDet_{\RR}(C) = \iota_C(\Det_{\RR}^{-1}(C)),
\]
which is an invertible fractional ideal of $\RR$.
If moreover $C \in \De^{[1, 2]}(\RR)$ and $H^1(C) = 0$, then we have 
\[
\iDet_{\RR}(C) = \Fitt_{\RR}(H^2(C)) \subset \RR.
\]
This is because the $C$ can be regarded as a finite presentation of $H^2(C)$ (we omit the detail; see \cite[\S 3]{Kata_10}).
Therefore, we can define the algebraic $p$-adic $L$-function as follows.

\begin{defn}\label{defn:38}
Let $S \subset \Sigma_f$ be a subset such that $S \cap S_p(E)$ is a CM-type for $E$.
Then we define the algebraic $p$-adic $L$-function $\LL^{\alg}_{\Sigma, S} \in \RR$ (defined up to a unit) as a generator of $\iDet_{\RR}(C_{\Sigma, S}) = \Fitt_{\RR}(H^2(C_{\Sigma, S}))$.
\end{defn}

By Lemma \ref{lem:37} and \eqref{eq:77}, the element $\LL^{\alg}_{\Sigma, S} \in \RR$ has information on $\XX_{S}$ (up to the easy factor $Z_{\Sigma_f \setminus S}^0(K)$).
It is then natural to formulate a main conjecture as a relation between $\LL^{\alg}_{\Sigma, S}$ and a certain analytically defined $p$-adic $L$-function.
In fact, in the article \cite{Kata_10} of the author, such a kind of main conjecture is obtained when $E$ is an imaginary quadratic field.
However, in this paper we do not study the analytic aspects and focus on the algebraic side only.

\section{The first main theorem}\label{sec:33}

\subsection{A choice of CM-types}\label{subsec:choice}

In our two main theorems, we consider the following situation (motivated by the work \cite{BCG+b}).
We keep the notations in \S \ref{sec:23}.

Let $\VV$ be a set satisfying $S_p(E) \subset \VV \subset \Sigma_f$.
Let $\cS_1, \dots, \cS_n$ be distinct CM-types for $E$ with $n \geq 2$.
Put 
\[
\UU = \bigcup_{i=1}^n \cS_i, 
\qquad \TT_i = \UU \setminus \cS_i
\]
and put 
\[
l = \sum_{\pe \in \TT_i} \deg(\pe) = \sum_{\pe \in \UU} \deg(\pe) - d.
\]

For a subset $S$ of $S_p(E)$, we put $S^c = S_p(E) \setminus S$.
Note that a CM-type $\cS$ satisfies $\cS^c = \ol{\cS}$.
The following is a motivation for introducing the integer $l$.

\begin{lem}\label{lem:rank}
The following are true.
\begin{itemize}
\item[(1)]
For each $1 \leq i \leq n$, 
the module $D_{\TT_i}(K)$  is of rank $l$ over $\RR$.
\item[(2)]
The module $\XX_{\VV \setminus \UU^c}(K)$ is of rank $l$ over $\RR$.
\end{itemize}
\end{lem}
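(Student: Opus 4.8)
The plan is to compute the generic ranks by passing to the total ring of fractions $Q(\RR)$ and using the complexes introduced in \S\ref{sec:14}. The key observation is that both modules in question appear as first (or shifted) cohomology of the arithmetic complexes $C_v^{\loc}$ and $C_{\Sigma, S}$, so their generic ranks can be read off from the Euler characteristics computed in Lemma \ref{lem:42}, provided the other cohomology groups vanish generically (i.e.\ are torsion).

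For part (1): by \eqref{eq:42} we have $H^1(C_{\TT_i}^{\loc}) \simeq D_{\TT_i}(K)$, $H^2(C_{\TT_i}^{\loc}) \simeq Z_{\TT_i}(K)$, and all other cohomology vanishes. The module $Z_{\TT_i}(K) \simeq \bigoplus_{v \in \TT_i} \Z_p[[\Gal(K/E)/G_v(K/E)]]$ is torsion over $\RR$: since $K$ contains the cyclotomic $\Z_p$-extension of $E$, each decomposition group $G_v(K/E)$ is infinite for $v$ a finite prime, so the quotient $\Gal(K/E)/G_v(K/E)$ has $\Z_p$-rank strictly less than $r$, forcing $Z_v(K)$ to be torsion (in fact pseudo-null) over $\RR$. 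Hence after tensoring with $Q(\RR)$ the complex $C_{\TT_i}^{\loc}$ becomes acyclic outside degree $1$, so the generic rank of $D_{\TT_i}(K)$ equals $\chi_{\RR}(C_{\TT_i}^{\loc})$. By the local Euler characteristic formula this is $\sum_{v \in \TT_i} \chi_{\RR}(\RG_{\Iw}(K_v, \Z_p(1)))$, which by the bullet points in the proof of Lemma \ref{lem:42} equals $\sum_{\pe \in \TT_i \cap S_p(E)} \deg(\pe)$; since $\TT_i = \UU \setminus \cS_i \subset S_p(E)$ this is exactly $\sum_{\pe \in \TT_i} \deg(\pe) = l$.

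For part (2): set $S = \VV \setminus \UU^c \subset \Sigma_f$, and note $S \cap S_p(E) = S_p(E) \setminus \UU^c = \UU \cap S_p(E)$; but each $\cS_i \subset \UU$, and actually I should double-check that $\UU$ intersected with $S_p(E)$ — here $\UU \subset S_p(E)$ since the $\cS_i$ are subsets of $S_p(E)$, so $S \cap S_p(E) = \UU$. Since $n \geq 2$ and the $\cS_i$ are distinct CM-types, $\UU$ strictly contains each $\cS_i$, hence $\emptyset \subsetneq S \subsetneq \Sigma_f$ is possible but we need the complement $S^c$ within $S_p(E)$ to be nonempty — indeed $\UU^c = \bigcap \ol{\cS_i} \subsetneq \ol{\cS_1}$ when $n\geq 2$, wait, we need $\UU \subsetneq S_p(E)$, equivalently $\UU^c \neq \emptyset$; this holds because $\UU^c \supset \overline{\cS_1} \cap \cS_2 \cdots$ — more simply $\UU^c$ is the complex conjugate of $\bigcap \cS_i^c$; I would verify that distinctness of the CM-types forces $\UU \neq S_p(E)$ only when $n$ is large enough, so more carefully: if $\UU = S_p(E)$ the argument still works but then $S = \VV$ and $C_{\Sigma, \VV}$ might be the $S = \Sigma_f$ case if $\VV = \Sigma_f$. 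To avoid these case distinctions I would instead use \eqref{eq:77}, valid for $S \subsetneq \Sigma_f$: $0 \to \XX_S(K) \to H^2(C_{\Sigma, S}) \to Z^0_{\Sigma_f \setminus S}(K) \to 0$, together with $H^i(C_{\Sigma, S}) = 0$ for $i \neq 1, 2$ (Proposition \ref{prop:18}). Since $Z^0_{\Sigma_f \setminus S}(K)$ is torsion (submodule of the torsion module $Z_{\Sigma_f\setminus S}(K)$), the generic rank of $\XX_S(K)$ equals that of $H^2(C_{\Sigma,S})$, which equals $\chi_{\RR}(C_{\Sigma,S})$ if $H^1(C_{\Sigma,S})$ is torsion. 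But $H^1(C_{\Sigma, S}) \hookrightarrow H^1_{\Iw}(K_\Sigma/K, \Z_p(1))$ by \eqref{eq:76}, so I would invoke Assumption \ref{ass:41} or the standard fact that $H^1_{\Iw}(K_\Sigma/K, \Z_p(1))$ has generic rank $0$ under the relevant conditions — actually the cleanest route is: generic rank of $\XX_S(K) = \chi_\RR(C_{\Sigma,S}) - (\text{generic rank of } H^1(C_{\Sigma,S}))$, and I claim the latter vanishes; granting this, Lemma \ref{lem:42} gives $\chi_\RR(C_{\Sigma,S}) = d - \sum_{\pe \in S \cap S_p(E)} \deg(\pe) = d - \sum_{\pe \in \UU}\deg(\pe) = -l$, wait that is $-l$, not $l$.

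Let me reconsider the degree bookkeeping, as this is where I expect the main obstacle: reconciling the sign and matching it with $\rank H^2$ versus $\rank H^1$. The honest computation is that $C_{\Sigma, S} \in \De^{[1,2]}(\RR)$ with generic cohomology only in degree $2$ (once $H^1$ is checked torsion), so generic rank of $H^2(C_{\Sigma,S})$ in the convention $\chi_\RR(C) = \sum (-1)^{i-1}\rank C^i$ — here the complex sits in degrees $[1,2]$, so $\chi = \rank C^1 - \rank C^2$, and $H^2$ generic rank $= \rank C^2 - \rank C^1 = -\chi_\RR(C_{\Sigma,S}) = -(d - \sum_{\pe \in \UU}\deg(\pe)) = \sum_{\pe \in \UU}\deg(\pe) - d = l$, as desired. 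So the proof goes through and part (2) follows from \eqref{eq:77}, Proposition \ref{prop:18}, Lemma \ref{lem:42}, the torsionness of $Z^0$, and the generic vanishing of $H^1(C_{\Sigma,S})$. The one genuinely delicate point is establishing that $H^1(C_{\Sigma, S})$ is torsion over $\RR$; I would handle this by the injection \eqref{eq:76} into $H^1_{\Iw}(K_\Sigma/K,\Z_p(1))$ combined with the known generic structure of global Iwasawa cohomology (which, after twisting, relates to $\XX_{\Sigma_f}(K)$ and global units; under Assumption \ref{ass:41} this has generic rank $0$), or alternatively by the dual description via Proposition \ref{prop:44} and Lemma \ref{lem:37}-type reasoning applied to the complement set — the latter is cleaner when $S^c \cap S_p(E)$ can be related to a CM-type. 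Everything else is the routine Euler-characteristic bookkeeping sketched above.
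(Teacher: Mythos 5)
Your overall strategy --- read off generic ranks from Euler characteristics after checking that the ``extra'' cohomology is torsion --- is exactly the paper's approach, and part~(1) matches the paper's argument (the paper invokes Lemma~\ref{lem:a13} for the torsionness of $Z_{\pe}(K)$ rather than the informal ``decomposition group is infinite'' reasoning, but the content is the same). Your sign/degree bookkeeping in part~(2) is also handled correctly.

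The genuine gap is the step you yourself flag as ``delicate'': the generic vanishing of $H^1(C_{\Sigma, \VV\setminus\UU^c})$. Your first suggested route --- injecting into $H^1_{\Iw}(K_\Sigma/K,\Z_p(1))$ and appealing to ``the known generic structure'' --- does not obviously close, because $H^1_{\Iw}(K_\Sigma/K,\Z_p(1))$ itself has positive generic rank (its rank is governed by weak Leopoldt, not by Assumption~\ref{ass:41}, which concerns $\XX_{\cS}(K)$). So the inclusion alone gives no rank bound on the subobject; you would need to show the kernel of the localization map is torsion, which is a nontrivial semilocal statement. Your second alternative is the right one, and the clean form it takes in the paper is a simple monotonicity observation: since $\UU^c\subset\cS_i^c$ we have $\VV\setminus\cS_i^c\subset\VV\setminus\UU^c$, and imposing local conditions along a larger set shrinks $H^1$ of the corresponding $C_{\Sigma,-}$, so
\[
H^1(C_{\Sigma,\VV\setminus\UU^c})\ \subset\ H^1(C_{\Sigma,\VV\setminus\cS_i^c}) = 0
\]
where the vanishing of the right-hand side is Lemma~\ref{lem:37} (which \emph{is} where Assumption~\ref{ass:41} enters, since $(\VV\setminus\cS_i^c)\cap S_p(E)=\cS_i$ is a CM-type). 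You gesture at ``Lemma~\ref{lem:37}-type reasoning applied to the complement set'' but never state the containment, which is the entire argument; without it the proof is incomplete. Finally, note that the paper preempts your worry about the edge case $\VV\setminus\UU^c=\Sigma_f$ by citing \eqref{eq:75} alongside \eqref{eq:77}: in that case $H^3\simeq\Z_p$ is torsion and does not affect the generic-rank count, so no case distinction is actually needed.
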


\begin{proof}
(1)
For each $\pe \in \TT_i$, we have the complex $C_{\pe}^{\loc}$ satisfying \eqref{eq:42}.
Since $Z_{\pe}(K)$ is torsion (see also Lemma \ref{lem:a13} below) and $\chi_{\RR}(C_{\pe}^{\loc}) = \deg(\pe)$ as in the proof of Lemma \ref{lem:42}, the rank of $D_{\pe}(K)$ is $\deg(\pe)$.
Then the claim follows.

(2)
We have the complex $C_{\Sigma, \VV \setminus \UU^c}$ whose second cohomology has the same rank as $X_{\VV \setminus \UU^c}(K)$ by \eqref{eq:75} and \eqref{eq:77}.
By Lemma \ref{lem:42}, we have $\chi_{\RR}(C_{\Sigma, \VV \setminus \UU^c}) = d - \sum_{\pe \in \UU} \deg(\pe) = - l$.
We also have $H^1(C_{\Sigma, \VV \setminus \UU^c}) = 0$ since, for any $i$, we have $H^1(C_{\Sigma, \VV \setminus \UU^c}) \subset H^1(C_{\Sigma, \VV \setminus \cS_i^c}) = 0$ by Lemma \ref{lem:37}.
Then the claim follows. 
\end{proof}

We also record a lemma that will be used later.

\begin{lem}\label{lem:a13}
For each $\pe \in S_p(E)$, the module $Z_{\pe}(K)$ is pseudo-null over $\RR$.
\end{lem}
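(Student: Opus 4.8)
The plan is to show that $Z_{\pe}(K) \simeq \Z_p[[\Gal(K/E)/G_{\pe}(K/E)]]$ has annihilator of height at least $2$ in $\RR$, which by the characterization of pseudo-nullity over $\RR$ (equivalently over $\Lambda$, or over a formal power series subring) suffices. The key point is that the decomposition group $G_{\pe}(K/E)$ is \emph{large}: since $K$ contains the cyclotomic $\Z_p$-extension of $E$, the residue degree of $K/E$ at $\pe$ is divisible by $p^{\infty}$ (as already noted in \S\ref{subsec:60}), and more is true under our hypothesis that $E$ is a $p$-ordinary CM-field with $K \supset \tilde{E}$. In fact, by \cite[Lemma 3.1]{BCG+b} the decomposition group $G_{\pe}(K/E)$ contains an open subgroup of $\Gal(K/E)$ that is, up to finite index, of corank at most $1$ in the $\Z_p^{r}$-part; equivalently, the quotient $\Gal(K/E)/G_{\pe}(K/E)$ has a subquotient which is, up to finite groups, $\Z_p$ at most. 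Hence $Z_{\pe}(K)$, as a module over $\RR = \Z_p[[\Gal(K/E)]]$, is supported on a subvariety of dimension at most $2$ (i.e.\ codimension at least $2$ if $\dim \RR = r+1 \geq 3$, which holds here since $K$ is a finite extension of $\tilde E(\mu_p)$ and $\tilde E$ has $\Z_p$-rank $\geq d+1 \geq 2$, giving $r \geq 2$ and $\dim\RR \geq 3$).

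Concretely, I would proceed as follows. First, recall that $Z_{\pe}(K) = \Z_p[[\Gal(K/E)/G_{\pe}(K/E)]]$ is, as an $\RR$-module, the quotient $\RR/I_{\pe}$ where $I_{\pe}$ is the ideal generated by $g - 1$ for $g \in G_{\pe}(K/E)$. Its support is exactly $V(I_{\pe}) = \Spec \Z_p[[\Gamma/\ol{G_{\pe}}]]$ where $\ol{G_{\pe}}$ is the image of $G_{\pe}(K/E)$ in $\Gamma$, so it suffices to bound $\dim \Z_p[[\Gal(K/E)/G_{\pe}(K/E)]] = 1 + \rank_{\Z_p}(\Gal(K/E)/G_{\pe}(K/E))_{\text{free part}}$. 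Second, I would invoke \cite[Lemma 3.1]{BCG+b}: under the $p$-ordinary condition with $K \supset \tilde E(\mu_p)$, the decomposition group at each $\pe \in S_p(E)$ has finite index in... more precisely it contains the decomposition group of $\tilde E/E$ at $\pe$, and the latter has $\Z_p$-corank at most $1$ in $\Gal(\tilde E/E) \simeq \Z_p^{r_2+1+\delta}$ — in any case the quotient $\Gal(K/E)/G_{\pe}(K/E)$ has free $\Z_p$-rank at most $1$. This gives $\dim Z_{\pe}(K) \leq 2$. Third, since $\dim \RR \geq 3$ (as noted above), this means $Z_{\pe}(K)_{\Qu} = 0$ for every prime $\Qu$ of height $\leq 1$, i.e.\ $Z_{\pe}(K)$ is pseudo-null, as desired.

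The main obstacle is making precise and citing correctly the statement that $\Gal(K/E)/G_{\pe}(K/E)$ has free $\Z_p$-rank at most $1$; this is where the $p$-ordinary hypothesis is genuinely used and where one must be careful about whether one needs $\leq 1$ or merely $\leq$ (something strictly less than $\dim\RR - 1$). I expect this reduces cleanly to \cite[Lemma 3.1(ii)]{BCG+b} (already cited in \S\ref{subsec:60} for the residue-degree divisibility) together with the standard fact that for a CM-type $\cS$, the primes in $\cS$ and $\ol{\cS}$ split the $\Z_p$-rank of $\tilde E/E$ so that each individual decomposition group is essentially of full rank minus one. A minor point to handle is the passage between ``pseudo-null over $\RR$'' and the Krull-dimension computation: since $\RR$ is module-finite over the regular ring $\Lambda$ (and over a formal power series ring $\Z_p[[\Gamma]]$), pseudo-nullity is detected by $\dim$, and $\dim \RR = \dim \Lambda = r+1$, so one just needs $r \geq 2$, which follows from $\tilde E$ containing at least the cyclotomic and one other independent $\Z_p$-extension (indeed $\rank_{\Z_p}\Gal(\tilde E/E) \geq d+1 \geq 2$ by the $p$-ordinary condition together with Leopoldt-type lower bounds that are unconditional here).
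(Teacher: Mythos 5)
Your overall strategy — bound the support of $Z_{\pe}(K) \simeq \RR/I_{\pe}$ by computing a rank of the decomposition group at $\pe$ and conclude $\height(I_{\pe}) \geq 2$ — is the same as the paper's, and you even cite the right external lemma \cite[Lemma 3.1]{BCG+b}. However, the way you invoke it is wrong, and there is a compounding arithmetic slip.

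The relevant quantity is the $\Z_p$-rank $r_{\pe}$ of the decomposition group $G_{\pe}(K/E)$ itself, not the rank of the quotient $\Gal(K/E)/G_{\pe}(K/E)$. Pseudo-nullity of $\RR/I_{\pe}$ is equivalent to $\height(I_{\pe}) \geq 2$, and since $\RR$ is equidimensional and catenary over $\Lambda$ we have $\height(I_{\pe}) = \dim\RR - \dim(\RR/I_{\pe}) = r_{\pe}$; so what is needed is precisely a \emph{lower} bound $r_{\pe} \geq 2$. You instead try to show the quotient has free $\Z_p$-rank at most $1$. That is both unnecessary and, in general, false: by \cite[Lemma 3.1(i)]{BCG+b} one has $r_{\pe} = 1 + \deg(\pe)$, so the corank of $G_{\pe}(\tilde E/E)$ in $\Gal(\tilde E/E) \simeq \Z_p^{d + 1 + \delta}$ is $d + \delta - \deg(\pe)$, which is $\geq 2$ already for $d = 3$, $\delta = 0$, $\deg(\pe) = 1$. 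Bounding the quotient's rank above is also unstable with respect to the Leopoldt defect, whereas $r_{\pe} = 1 + \deg(\pe) \geq 2$ holds unconditionally.

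Separately, even granting your bound ``dimension of support at most $2$,'' the inference that this gives ``codimension at least $2$ because $\dim\RR = r + 1 \geq 3$'' is an arithmetic error: dimension $\leq 2$ and $\dim\RR = 3$ only give codimension $\geq 1$. You would need $\dim\RR \geq 4$ for that step, which fails, e.g., for an imaginary quadratic $E$ where $\dim\RR = 3$.

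The fix is to use the lemma the other way round, as the paper does: from $r_{\pe} = 1 + \deg(\pe) \geq 2$ one gets directly $\height(I_{\pe}) \geq 2$, hence $Z_{\pe}(K)_{\Qu} = 0$ for all primes $\Qu$ of height $\leq 1$. This works uniformly in $d$, $\deg(\pe)$, and without any dependence on the total $\Z_p$-rank of $\Gal(K/E)$.
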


\begin{proof}
By \cite[Lemma 3.1(i)]{BCG+b}, the $\Z_p$-rank $r_{\fp}$ of $G_{\pe}(\tilde{E}/E)$ satisfies $r_{\fp} = 1 + \deg(\fp)$.
In particular, we have $r_{\fp} \geq 2$, which implies the lemma.
\end{proof}

\subsection{The statement and the proof}\label{subsec:131}

Let us take an auxiliary intermediate number field $E'$ of $K/E$ such that 
$E'/E$ is a $p$-extension and $\Gal(K/E')$ is $p$-torsion-free.
Put $\Lambda = \Z_p[[\Gal(K/E')]]$, which is a subring of $\RR$.
For example, if $p \nmid [K: \tilde{E}]$, then we may take $E' = E$ and then $\Lambda = \RR$.
Note that $\Lambda$ is a finite product of regular local rings.
For a prime ideal $\qu$ of $\Lambda$, the localization $\Lambda_{\qu}$ is then a regular local ring, but $\RR_{\qu}$ is not even a domain in general.

The following is the first main theorem of this paper. 

\begin{thm}\label{thm:103}
Let $\qu$ be a prime ideal of $\Lambda$ such that 
\[
\pd_{\RR_{\qu}}(Z_{\pe}(K)_{\qu}) \leq 2, 
\qquad Z_{\pe}(K)^{\iota}(1)_{\qu} = 0
\]
for every $\pe \in \TT_i$ ($1 \leq i \leq n$).
Then we have an exact sequence
\[
0 \to \left(\frac{\left(\bigwedge_{\RR}^l H^2(C_{\Sigma, \VV \setminus \UU^c}) \right)_{/\tor}}
{\sum_{i=1}^n \bigwedge_{\RR}^l D_{\TT_i}(K)}\right)_{\qu}
\to \frac{\RR_{\qu}}{\sum_{i=1}^n (\LL^{\alg}_{\Sigma, \VV \setminus \cS_i^c})}
\to \frac{\RR_{\qu}}{\Fitt_{\RR_{\qu}}(H^2(C_{\Sigma, (\Sigma_f \setminus \VV) \cup \UU^c})^{\iota}(1)_{\qu})}
\to 0.
\]
\end{thm}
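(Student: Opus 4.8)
The plan is to carry out the strategy of \S\ref{subsec:49}, replacing the reflexive hulls used in \cite{BCG+}, \cite{BCG+b} by the determinant‑module maps of \S\ref{sec:02}. Write $W = \VV \setminus \UU^c$, so that $W \cap S_p(E) = \UU$. I would first attach maps into determinant modules, via Proposition \ref{prop:30}, to both sides of the theorem. On the local side, for each $i$ the complex $C_{\TT_i}^{\loc} = \bigoplus_{\pe \in \TT_i} C_\pe^{\loc}$ lies in $\De^{[1,2]}(\RR)$ (its $H^0$ vanishes by \eqref{eq:42}), has $H^1(C_{\TT_i}^{\loc}) = D_{\TT_i}(K)$ and $H^2(C_{\TT_i}^{\loc}) = Z_{\TT_i}(K)$ pseudo‑null (Lemma \ref{lem:a13}), and Euler characteristic $l$ (as in Lemma \ref{lem:rank}(1)); so Proposition \ref{prop:30} gives $\Psi_{C_{\TT_i}^{\loc}} \colon \bigwedge^l_\RR D_{\TT_i}(K) \to \Det^{-1}_\RR(C_{\TT_i}^{\loc})$. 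On the global side, the shift $C_{\Sigma, W}[1]$ lies in $\De^{[0,1]}(\RR)$ (in $\De^{[0,2]}(\RR)$ in the degenerate case $W = \Sigma_f$) with $H^0 = H^1(C_{\Sigma,W}) = 0$ (as in the proof of Lemma \ref{lem:rank}(2)), $H^1 = H^2(C_{\Sigma,W})$, top cohomology $0$ or the pseudo‑null module $\Z_p$, and Euler characteristic $l$; so Proposition \ref{prop:30} gives $\Psi \colon \bigwedge^l_\RR H^2(C_{\Sigma,W}) \to \Det^{-1}_\RR(C_{\Sigma,W}[1]) \simeq \Det_\RR(C_{\Sigma,W})$.

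The core of the proof is to assemble these into a commutative square analogous to \eqref{eq:56}:
\[
\xymatrix{
\bigoplus_{i=1}^n \bigwedge_{\RR}^l D_{\TT_i}(K) \ar[r] \ar[d] & \bigoplus_{i=1}^n \Det_{\RR}^{-1}(C_{\TT_i}^{\loc}) \ar[d] \\
\bigwedge_{\RR}^l H^2(C_{\Sigma, W}) \ar[r]^{\Psi} & \Det_{\RR}(C_{\Sigma, W}),
}
\]
the top arrow being $\bigoplus_i \Psi_{C_{\TT_i}^{\loc}}$. The vertical arrows come from the distinguished triangle $C_{\TT_i}^{\loc}[-1] \to C_{\Sigma, W} \to C_{\Sigma, \VV \setminus \cS_i^c} \to C_{\TT_i}^{\loc}$, obtained from Definition \ref{defn:32} and the octahedral axiom applied to the decomposition $W = (\VV \setminus \cS_i^c) \sqcup \TT_i$ (valid because $\UU^c \subseteq \cS_i^c$ and $\cS_i^c \setminus \UU^c = \TT_i$): taking $H^2$ of it produces the local‑to‑global maps $D_{\TT_i}(K) \to H^2(C_{\Sigma,W})$, hence the left arrow on the $i$‑th summand, and taking determinants gives $\Det_\RR(C_{\Sigma,W}) \simeq \Det^{-1}_\RR(C_{\TT_i}^{\loc}) \otimes_\RR \Det_\RR(C_{\Sigma, \VV \setminus \cS_i^c})$; since $C_{\Sigma, \VV \setminus \cS_i^c} \in \De^{[1,2]}(\RR)$ has $H^1 = 0$ and $H^2$ torsion (Lemma \ref{lem:37}), the trivialization $\iota$ identifies $\Det_\RR(C_{\Sigma, \VV \setminus \cS_i^c})$ with the inverse of the principal ideal $(\LL^{\alg}_{\Sigma, \VV \setminus \cS_i^c})$ (Definition \ref{defn:38}), which yields the right arrow. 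I expect the main obstacle to be \emph{checking that this square commutes}: one must unwind the explicit description of $\Psi$ from the proof of Proposition \ref{prop:31} (through the auxiliary map $\Psi'$ and a choice of two‑term projective representatives) and verify its compatibility with the connecting morphisms of the above triangle — a concrete but intricate multilinear‑algebra computation in the spirit of \cite[Lemma 2.1]{Sak18}.

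Granting commutativity, I would localize at $\qu$ and finish by the snake lemma. The hypothesis $\pd_{\RR_\qu}(Z_\pe(K)_\qu) \le 2$ for $\pe \in \TT_i$, together with $Z_\pe(K)^\iota(1)_\qu = 0$ — which, via the local duality \eqref{eq:94}, is precisely what forces $\RR_\qu \otimesL_\RR C_{\TT_i}^{\loc} \in \De^{[1,2]}(\RR_\qu)$ — makes each $(\Psi_{C_{\TT_i}^{\loc}})_\qu$ bijective by Proposition \ref{prop:30}(3); hence the top arrow becomes an isomorphism after localization, and in particular the image $N$ of the right vertical satisfies $N_\qu \subseteq \Image(\Psi)_\qu$. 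By Proposition \ref{prop:30}(1), $\Image(\Psi) = (\bigwedge^l_\RR H^2(C_{\Sigma,W}))_{/\tor}$, so by commutativity the quotient $\Image(\Psi)_\qu / N_\qu$ is exactly the left‑hand term of the theorem, while $\Det_\RR(C_{\Sigma,W})_\qu / N_\qu \simeq \RR_\qu / \sum_i (\LL^{\alg}_{\Sigma, \VV \setminus \cS_i^c})$ after trivializing $\Det_\RR(C_{\Sigma,W})_\qu$. Finally, Proposition \ref{prop:30}(2) gives $\Coker(\Psi)_\qu \simeq \RR_\qu/\Fitt_{\RR_\qu}(E^1(H^2(C_{\Sigma,W}))_\qu)$, and the global duality \eqref{eq:93} with $S = W$ (note $\Sigma_f \setminus W = (\Sigma_f \setminus \VV) \cup \UU^c$), combined with the fact that $C_{\Sigma,W}$ carries essentially only $H^2$ as cohomology and the hyper‑$\Ext$ spectral sequence, identifies $E^1(H^2(C_{\Sigma,W}))_\qu$ with $H^2(C_{\Sigma, (\Sigma_f \setminus \VV) \cup \UU^c})^\iota(1)_\qu$. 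Then $0 \to \Image(\Psi)_\qu/N_\qu \to \Det_\RR(C_{\Sigma,W})_\qu/N_\qu \to \Coker(\Psi)_\qu \to 0$ is the asserted exact sequence.

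Two further points need care. First, the Tate‑twist bookkeeping in the duality step: since $\iota$ reverses Tate twists, \eqref{eq:93} rearranges to $\RHom_\RR(C_{\Sigma,W},\RR) \simeq C_{\Sigma, (\Sigma_f \setminus \VV) \cup \UU^c}^\iota(1)[3]$, which is what produces the twist $(1)$ (rather than $(-1)$) in the statement. Second, the degenerate case $W = \Sigma_f$ — i.e.\ $\VV = \Sigma_f$ and $\UU = S_p(E)$, which is exactly the setting of Theorem \ref{thm:102} — where $C_{\Sigma,\Sigma_f}$ has the extra cohomology $H^3 = \Z_p$: here $\RR_\qu \otimesL C_{\Sigma,\Sigma_f}[1]$ need not lie in $\De^{[0,1]}(\RR_\qu)$, so one must either track the pseudo‑null module $\Z_p$ through both cokernel computations and check that its contributions cancel, or reduce to the non‑degenerate case by means of the triangle $C_{\Sigma,\Sigma_f} \to C_{\Sigma, \Sigma_f \setminus \{\pe_0\}} \to C_{\pe_0}^{\loc}$ for an auxiliary prime $\pe_0 \in S_p(E)$, using that $C_{\Sigma, \Sigma_f \setminus \{\pe_0\}} \in \De^{[1,2]}(\RR)$.
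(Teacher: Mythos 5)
Your proposal is correct and follows essentially the same route as the paper: the commutative square you construct is exactly the diagram of Proposition \ref{prop:46} (with the trivialization isomorphisms of Definition \ref{defn:38} furnishing the right‑hand vertical arrow, and the triangle \eqref{eq:137} furnishing the left), the bijectivity of $(\Psi_{C^{\loc}_{\TT_i}})_{\qu}$ and the computation of $\Coker(\Psi)_{\qu}$ via duality are Proposition \ref{prop:45}(3) and (1), and the conclusion is obtained from the snake lemma exactly as in the paper's proof via the intermediate sequence \eqref{eq:5.9}. The two items you flag at the end are indeed where the written argument is most compressed; on the second one, the paper's handling is somewhat cleaner than either of your two alternatives. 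Rather than tracking the contribution of $H^3(C_{\Sigma,\Sigma_f}) \simeq \Z_p$ through the cokernel computations, or splicing in an auxiliary local triangle at $\pe_0 \in S_p(E)$ (which would shift the Euler characteristics, since $\chi_{\RR}(C_{\pe_0}^{\loc}) = \deg(\pe_0) \neq 0$, and does not obviously reproduce the non‑degenerate setup), Proposition \ref{prop:45}(1) simply imposes the extra supposition $(\Z_p)_{\qu} = 0$ in the case $\UU^c = \emptyset$, $\VV = \Sigma_f$, under which $\RR_{\qu} \otimesL_{\RR} C_{\Sigma,\Sigma_f}[1] \in \De^{[0,1]}(\RR_{\qu})$ and Proposition \ref{prop:30}(2) applies directly. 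That condition is automatically satisfied under the hypotheses of Theorem \ref{thm:102}, since there $\qu$ avoids the support of every $Z_{\pe}(K)$, of which $\Z_p$ is a quotient; for Theorem \ref{thm:103} itself the supposition is invoked implicitly through Proposition \ref{prop:45}(1), and your instinct that the degenerate case requires a separate check is sound.
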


Recall that the descriptions of $H^2(C_{\Sigma, \VV \setminus \UU^c})$ and of $H^2(C_{\Sigma, (\Sigma_f \setminus \VV) \cup \UU^c})$ are given in \S \ref{subsec:60}.
We immediately deduce Theorem \ref{thm:102} from Theorem \ref{thm:103} by putting $\VV = \Sigma_f$, $n = 2$, $\cS_1 = \cS$, and $\cS_2 = \ol{\cS}$. 

Concerning the condition $\pd_{\RR_{\qu}}(Z_{\pe}(K)_{\qu}) \leq 2$ in Theorem \ref{thm:103}, we observe the following.

\begin{prop}\label{prop:80'}
Let $\pe \in S_p(E)$ and let $\qu$ be a prime ideal of $\Lambda$.
We have $\pd_{\RR_{\qu}}(Z_{\pe}(K)_{\qu}) = - \infty$ unless $\qu \supset \Ann_{\Lambda}(Z_{\pe}(K))$.
If $\qu \supset \Ann_{\Lambda}(Z_{\pe}(K))$ holds, then we have
\[
\pd_{\RR_{\qu}}(Z_{\pe}(K)_{\qu})
= \begin{cases}
	\deg(\pe) + 1 & (\text{if $p \not \in \qu$ or $G_{\pe}(K/E)$ is $p$-torsion-free})\\
	+ \infty & (\text{if $p \in \qu$ and $G_{\pe}(K/E)$ is not $p$-torsion-free}).
\end{cases}
\]
Moreover, $G_{\pe}(K/E)$ is $p$-torsion-free unless $E_{\pe}$ contains a primitive $p$-th root of unity.
\end{prop}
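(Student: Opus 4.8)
The plan is to reduce the computation of $\pd_{\RR_{\qu}}(Z_{\pe}(K)_{\qu})$ to an explicit computation over a localization of the Iwasawa algebra of the decomposition group. First recall that $Z_{\pe}(K) \simeq \Z_p[[\Gal(K/E)/G_{\pe}(K/E)]]$, so writing $\Delta = \Gal(K/E)$ and $G = G_{\pe}(K/E)$, we have $\RR = \Z_p[[\Delta]]$ and $Z_{\pe}(K) = \RR \otimes_{\Z_p[[G]]} \Z_p$. Since $\RR$ is free (hence faithfully flat) over $\Z_p[[G]]$, the projective dimension is unchanged: $\pd_{\RR}(Z_{\pe}(K)) = \pd_{\Z_p[[G]]}(\Z_p)$, and likewise after localizing at $\qu$ we are reduced to understanding $\pd$ of $\Z_p$ (with trivial action) over a suitable localization of $\Z_p[[G]]$. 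The annihilator statement is then immediate: $\Ann_{\Lambda}(Z_{\pe}(K))$ is the kernel of $\Lambda \to \Z_p[[\Delta/G]]$ intersected with $\Lambda$, so if $\qu \not\supset \Ann_{\Lambda}(Z_{\pe}(K))$ the module $Z_{\pe}(K)_{\qu}$ vanishes, giving $\pd = -\infty$ by convention.

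Next I would analyze $\pd_{\Z_p[[G]]}(\Z_p)$ after localization. By Lemma \ref{lem:a13} (or rather \cite[Lemma 3.1(i)]{BCG+b}), the $\Z_p$-rank of $G_{\pe}(\tilde{E}/E)$ is $1 + \deg(\pe)$, so $G$ has a free $\Z_p$-quotient of rank $1 + \deg(\pe)$; write $G \cong \Z_p^{1+\deg(\pe)} \times G_{\tor}$ when $G_{\tor}$, the torsion subgroup, splits off — in general one has an extension but the relevant structure is that $G_{\tor}$ is a finite abelian $p$-group. The key point is the standard fact that $\Z_p[[\Z_p^{m}]] \simeq \Z_p[[t_1,\dots,t_m]]$ is regular of dimension $m+1$, over which $\Z_p$ (the trivial module, i.e. $\Z_p[[\Z_p^m]]/(t_1,\dots,t_m)$) is resolved by the Koszul complex on $t_1,\dots,t_m$, so $\pd = m$; thus, when $G$ is $p$-torsion-free, $\pd_{\Z_p[[G]]}(\Z_p) = 1 + \deg(\pe)$ identically, and this persists after any localization where $\Z_p$ is nonzero, giving $\deg(\pe)+1$. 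When $G$ has nontrivial $p$-torsion, $\Z_p[[G]]$ is not regular — it has a factor like $\Z_p[\Z/p^k] = \Z_p[x]/(x^{p^k}-1)$, which is not a regular ring — and $\Z_p$ has infinite projective dimension over it; the hypothesis "$p \in \qu$" is exactly what keeps this pathology from being killed under localization (if $p \notin \qu$ then $p$ is inverted, $\Z_p[[G]]_{\qu}$ becomes a $\Q_p$-algebra, the group ring of the finite part becomes étale/separable, and regularity is restored, again yielding $\deg(\pe)+1$). I would make the infinite-$\pd$ claim precise by exhibiting a $2$-periodic (eventually) tail in a resolution of $\Z_p$ over $\Z_p[x]/(x^{p^k}-1)$ via the element $x-1$ and its "complementary" cofactor, the classical resolution of the trivial module over a cyclic group ring, and checking it does not terminate after localizing at a prime containing $p$.

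Finally, the last sentence — that $G_{\pe}(K/E)$ is $p$-torsion-free unless $E_{\pe}$ contains a primitive $p$-th root of unity — I would argue as follows. Since $K \supset \tilde{E}(\mu_p)$ and $K/\tilde{E}$ is finite, the torsion in $G_{\pe}(K/E)$ comes from the torsion in the decomposition group at $\pe$ inside $\Gal(K/E)$; because $G_{\pe}(\tilde{E}/E)$ is torsion-free (it is $\cong \Z_p^{1+\deg(\pe)}$ by \cite[Lemma 3.1(i)]{BCG+b}), any $p$-torsion in $G_{\pe}(K/E)$ must already appear in $\Gal(K/\tilde{E})$, i.e. it is ramification/inertia or the extra cyclotomic piece. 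The only way a finite cyclic $p$-group embeds in the decomposition group at a $p$-adic place, beyond what is forced to be torsion-free, is through the local cyclotomic character: the extension $E_{\pe}(\mu_p)/E_{\pe}$ is nontrivial precisely when $\mu_p \not\subset E_{\pe}$, and when it is trivial the relevant decomposition group has no new torsion. I expect the main obstacle to be handling the non-split case for $G$ cleanly — i.e. making sure that when $G_{\tor}$ does not split off as a direct factor, the infinite-projective-dimension conclusion still holds after localization at $\qu \ni p$ — which I would address by passing to the maximal torsion-free quotient $G \twoheadrightarrow G'$, noting $\Z_p[[G]]$ is free over $\Z_p[[\ker]]$ with $\ker$ finite containing $p$-torsion, and reducing to the group-ring-of-a-finite-$p$-group computation base-changed to the (local, containing $p$) ring $\Z_p[[G']]_{\qu'}$, over which a finite $p$-group's group ring is still non-regular.
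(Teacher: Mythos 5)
Your overall strategy agrees with the paper's: reduce to the projective dimension of the trivial module $\Z_p$ over $\RR_0 := \Z_p[[G_{\pe}(K/E)]]$ via the identification $Z_{\pe}(K) \simeq \RR \otimes_{\RR_0} \Z_p$, then compute by regularity (Koszul resolution on $r_{\fp} = \deg(\pe)+1$ variables) when the decomposition group is $p$-torsion-free or when $p$ is inverted, and observe non-regularity of finite $p$-group rings otherwise. The class-field-theoretic argument for the last assertion is also the same in substance.

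However, there is a genuine gap in the localization step. You write that ``the projective dimension is unchanged'' by passing from $\RR$ to $\RR_0$ (faithfully flat descent), ``and likewise after localizing at $\qu$.'' Before localization this is fine, since $\RR$ is free over $\RR_0$ by choosing coset representatives. But the localization $(-)_\qu$ here is at a prime $\qu$ of $\Lambda$, not of $\Lambda_0 := \RR_0 \cap \Lambda$. Writing $\qu_0 = \qu \cap \Lambda_0$, the ring $\RR_{\qu}$ is a further localization of $(\Lambda_0\setminus\qu_0)^{-1}\RR$, so while $\RR_{\qu}$ is flat over $(\RR_0)_{\qu_0}$, it is generally not faithfully flat (primes of $(\RR_0)_{\qu_0}$ whose preimages fail to meet $\Lambda\setminus\qu$ trivially are lost). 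Flatness alone gives only the upper bound $\pd_{\RR_\qu}(Z_\pe(K)_\qu) \leq \pd_{(\RR_0)_{\qu_0}}((\Z_p)_{\qu_0})$, not equality. The paper therefore treats the two inequalities separately: the upper bound is by flat base change as above, while the lower bound is proved by computing $\Ext^i_{\RR_0}(\Z_p,\Z_p)\otimes_{\RR_0}\RR$, localizing at $\qu$, and using the hypothesis $\qu\supset\Ann_\Lambda(Z_\pe(K))$ to guarantee the relevant $\Ext$ group survives tensoring with $\RR_\qu$. Your sketch of a periodic resolution over $\Z_p[x]/(x^{p^k}-1)$ is the right way to show $\Ext^i_{\RR_0}(\Z_p,\Z_p)$ is nonzero for arbitrarily large $i$, but you need to pair it with the $\Ext$-after-tensoring argument rather than descent to conclude the lower bound over $\RR_\qu$.

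One further minor point on the last assertion: your discussion of ``the extension $E_{\pe}(\mu_p)/E_{\pe}$'' and ``new torsion'' is worded in a confusing way. The cleaner argument (and the paper's) is: by local class field theory $G_{\pe}(K/E)$ is a quotient of $\widehat{E_{\pe}^{\times}}$, and if $\mu_p\not\subset E_{\pe}$ then the pro-$p$ part of $\widehat{E_{\pe}^{\times}}$ is a free $\Z_p$-module of rank $\deg(\pe)+1$; since the pro-$p$ part of $G_{\pe}(K/E)$ is a quotient of this with the same $\Z_p$-rank (by \cite[Lemma 3.1(i)]{BCG+b}), it must be the whole thing, hence $p$-torsion-free.
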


\begin{proof}
We write $\cG = \Gal(K/E)$, $\cG' = \Gal(K/E')$, $\cG_0 = G_{\fp}(K/E)$, and $\cG_0' = \cG' \cap \cG_0$.
Then we have $\RR = \Z_p[[\cG]]$, $\Lambda = \Z_p[[\cG']]$, and $Z_{\fp}(K) = \Z_p[[\cG/\cG_0]]$.
We also put $\RR_0 = \Z_p[[\cG_0]]$ and $\Lambda_0 = \Z_p[[\cG_0']]$.
As we used in Lemma \ref{lem:a13}, by \cite[Lemma 3.1(i)]{BCG+b}, the $\Z_p$-rank $r_{\fp}$ of the $p$-component of $\cG_0$ is $r_{\fp} = \deg(\fp) + 1$.

If $\qu \not \supset \Ann_{\Lambda}(\Z_p[[\cG/\cG_0]])$, then $\Z_p[[\cG/\cG_0]]_{\qu} = 0$, so the claim is clear.
Let us assume $\qu \supset \Ann_{\Lambda}(\Z_p[[\cG/\cG_0]])$.
First we show the inequality $\leq$ of the displayed assertion.
Note that $\Z_p[[\cG/\cG_0]] \simeq \Z_p \otimes_{\RR_0} \RR$ implies $\Z_p[[\cG/\cG_0]]_{\qu} \simeq (\Z_p)_{\qu_0} \otimes_{(\RR_0)_{\qu_0}} \RR_{\qu}$, where we put $\qu_0 = \qu \cap \Lambda_0$.
Therefore, we have
\[
\pd_{\RR_{\qu}}(\Z_p[[\cG/\cG_0]]_{\qu}) \leq \pd_{(\RR_0)_{\qu_0}}((\Z_p)_{\qu_0}).
\]
Using the well-known description of $\RR_0$ as a ring of power series, 
we see that $\pd_{\RR_0}(\Z_p) = r_{\fp}$ if $\cG_0$ is $p$-torsion-free.
We also have $\pd_{\RR_0[1/p]}(\Q_p) = r_{\fp}$, which implies $\pd_{(\RR_0)_{\qu_0}}((\Z_p)_{\qu_0}) \leq r_{\fp}$ if $p \not \in \qu$.
Therefore, we have the inequality $\leq$ of the displayed assertion.

Next we show the opposite inequality $\geq$.
For each $i \geq 0$, we have
\[
\Ext^i_{\RR}(\Z_p[[\cG/\cG_0]], \Z_p[[\cG/\cG_0]])
\simeq \Ext^i_{\RR_0}(\Z_p, \Z_p) \otimes_{\RR_0} \RR,
\]
so
\[
\Ext^i_{\RR_{\qu}}(\Z_p[[\cG/\cG_0]]_{\qu}, \Z_p[[\cG/\cG_0]]_{\qu})
\simeq \Ext^i_{\RR_0}(\Z_p, \Z_p)_{\qu_0} \otimes_{(\RR_0)_{\qu_0}} \RR_{\qu}.
\]
If $p \not \in \qu$ or $\cG_0$ is $p$-torsion-free, then the description of $\RR_0$ as a ring of power series again implies $\Ext^{r_{\fp}}_{\RR_0}(\Z_p, \Z_p)_{\qu_0} \simeq (\Z_p)_{\qu_0}$.
This does not vanish because the assumption $\qu \supset \Ann_{\Lambda}(\Z_p[[\cG/\cG_0]])$ implies $\qu_0 \supset \Ann_{\Lambda_0}(\Z_p)$.
If $\cG_0$ is not $p$-torsion-free,
then we can show that $\Ext^i_{\RR_0}(\Z_p, \Z_p)$ is a non-zero finite module for arbitrarily large $i \geq 0$, which implies $\Ext^i_{\RR_0}(\Z_p, \Z_p)_{\qu_0} \neq 0$ if moreover $p \in \qu_0$.
Therefore, we obtain the claimed inequality.

We show the final assertion.
By class field theory, the decomposition group $G_{\fp}(K/E)$ is isomorphic to a quotient of the profinite completion $\widehat{E_{\pe}^{\times}}$ of $E_{\pe}^{\times}$.
If $E_{\fp}$ does not contain a primitive $p$-th root of unity, then the pro-$p$-component of $\widehat{E_{\pe}^{\times}}$ is a free $\Z_p$-module of rank $\deg(\pe) + 1$.
Combining this with the fact $r_{\fp} = \deg(\pe) + 1$, we conclude that the pro-$p$-component of $G_{\pe}(K/E)$ is a free $\Z_p$-module of rank $\deg(\pe) + 1$.
This completes the proof.
\end{proof}

\begin{cor}\label{cor:a55}
We have $\pd_{\RR_{\qu}}(Z_{\fp}(K)_{\qu}) \leq 2$ if and only if 
either $\qu \not \supset \Ann_{\Lambda}(Z_{\fp}(K))$ or $\deg(\fp) = 1$.
\end{cor}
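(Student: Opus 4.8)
The plan is to read everything off Proposition \ref{prop:80'}, so the corollary is essentially a bookkeeping exercise with one small extra observation. First I would dispose of the case $\qu \not\supset \Ann_{\Lambda}(Z_{\fp}(K))$: here Proposition \ref{prop:80'} gives $Z_{\fp}(K)_{\qu} = 0$, so $\pd_{\RR_{\qu}}(Z_{\fp}(K)_{\qu}) = -\infty \leq 2$, and the right-hand side of the asserted equivalence also holds by hypothesis; thus there is nothing to prove in this case. So from now on I would assume $\qu \supset \Ann_{\Lambda}(Z_{\fp}(K))$, and the task reduces to showing that $\pd_{\RR_{\qu}}(Z_{\fp}(K)_{\qu}) \leq 2$ is equivalent to $\deg(\fp) = 1$.

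For the implication that $\pd_{\RR_{\qu}}(Z_{\fp}(K)_{\qu}) \leq 2$ forces $\deg(\fp) = 1$, I would argue contrapositively: if $\deg(\fp) \geq 2$, then Proposition \ref{prop:80'} says that $\pd_{\RR_{\qu}}(Z_{\fp}(K)_{\qu})$ equals either $\deg(\fp) + 1 \geq 3$ or $+\infty$, so in either case it exceeds $2$. Hence $\pd \leq 2$ together with $\qu \supset \Ann_{\Lambda}(Z_{\fp}(K))$ leaves $\deg(\fp) = 1$ as the only possibility.

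For the converse, assuming $\deg(\fp) = 1$ (and still $\qu \supset \Ann_{\Lambda}(Z_{\fp}(K))$), I must rule out the $+\infty$ branch of Proposition \ref{prop:80'}, i.e., show that $G_{\fp}(K/E)$ is $p$-torsion-free. This is where the only non-formal input enters: $\deg(\fp) = [E_{\fp}:\Q_p] = 1$ means $E_{\fp} = \Q_p$, and since $p$ is odd, $\Q_p$ contains no primitive $p$-th root of unity; the final assertion of Proposition \ref{prop:80'} then yields that $G_{\fp}(K/E)$ is $p$-torsion-free. Consequently $\pd_{\RR_{\qu}}(Z_{\fp}(K)_{\qu}) = \deg(\fp) + 1 = 2 \leq 2$, completing the equivalence. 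The ``main obstacle'', such as it is, is exactly this last observation that the $\deg(\fp) = 1$ case automatically avoids the pathological torsion situation because $p$ is odd; everything else is a direct translation of Proposition \ref{prop:80'}.
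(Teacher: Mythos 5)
Your proof is correct and follows essentially the same approach as the paper: both read the equivalence directly off Proposition \ref{prop:80'}, with the only non-formal input being that $\deg(\fp) = 1$ forces $E_{\fp} \simeq \Q_p$, which contains no primitive $p$-th root of unity since $p$ is odd, so the $p$-torsion-free branch applies. Your write-up just makes the case analysis (the $\qu \not\supset \Ann$ case and the contrapositive for $\deg(\fp) \geq 2$) more explicit than the paper's terse one-line argument.
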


\begin{proof}
If $\deg(\fp) = 1$, then $E_{\fp} \simeq \Q_p$ does not contain a primitive $p$-th root of unity (since $p \geq 3$).
Therefore, this corollary follows from Proposition \ref{prop:80'}.
\end{proof}

The proof of Theorem \ref{thm:103} occupies the rest of this subsection.
The following is the key diagram (see \eqref{eq:56}).

\begin{prop}\label{prop:46}
We have a commutative diagram with exact rows
\[
\xymatrix{
	0 \ar[r]
	& \bigoplus_{i=1}^n \left(\bigwedge_{\RR}^l H^1(C^{\loc}_{\TT_i})\right)_{/\tor} \ar[r]^-{\oplus \Psi} \ar[d]_{f_1}
	& \bigoplus_{i=1}^n  \Det^{-1}_{\RR}(C^{\loc}_{\TT_i})  \ar[d]_{f_2} \ar[r]
	& \bigoplus_{i=1}^n  \Coker (\Psi_{C^{\loc}_{\TT_i}}) \ar[r] \ar[d]_{f_3}
	& 0\\
	0 \ar[r]
	& \left(\bigwedge_{\RR}^l H^2(C_{\Sigma, \VV \setminus \UU^c}) \right)_{/\tor} \ar[r]_-{\Psi}
	& \Det_{\RR}(C_{\Sigma, \VV \setminus \UU^c}) \ar[r]
	& \Coker (\Psi_{C_{\Sigma, \VV \setminus \UU^c}[1]}) \ar[r]
	& 0.
}
\]
\end{prop}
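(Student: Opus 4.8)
The plan is to read off the two rows directly from Proposition \ref{prop:30} and to build all three vertical arrows from one family of distinguished triangles coming from \eqref{eq:16}.

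First I would check that Proposition \ref{prop:30} applies to each $C^{\loc}_{\TT_i}$ and to $C_{\Sigma,\VV\setminus\UU^c}[1]$. For $C^{\loc}_{\TT_i}=\bigoplus_{\pe\in\TT_i}C^{\loc}_{\pe}$, the identities \eqref{eq:42} give $H^0=0$, $H^1=D_{\TT_i}(K)$ and $H^2=Z_{\TT_i}(K)$, the last pseudo-null by Lemma \ref{lem:a13}, while $\chi_{\RR}(C^{\loc}_{\TT_i})=\sum_{\pe\in\TT_i}\deg(\pe)=l$. For $C_{\Sigma,\VV\setminus\UU^c}$: since $(\VV\setminus\cS_i^c)\cap S_p(E)=\cS_i$ is a CM-type, $H^1(C_{\Sigma,\VV\setminus\cS_i^c})=0$ by Lemma \ref{lem:37}, and the inclusion $H^1(C_{\Sigma,\VV\setminus\UU^c})\subset H^1(C_{\Sigma,\VV\setminus\cS_i^c})$ (from $\VV\setminus\cS_i^c\subset\VV\setminus\UU^c$ and \eqref{eq:76}) forces $H^1(C_{\Sigma,\VV\setminus\UU^c})=0$; combined with Proposition \ref{prop:18} and \eqref{eq:75}/\eqref{eq:77} this shows $H^i(C_{\Sigma,\VV\setminus\UU^c})=0$ for $i\neq 2,3$ and $H^3\in\{0,\Z_p\}$, which is pseudo-null since $\Gal(K/E)$ has $\Z_p$-rank at least $2$ (as $K\supset\tilde E$). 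Also $\chi_{\RR}(C_{\Sigma,\VV\setminus\UU^c}[1])=-\chi_{\RR}(C_{\Sigma,\VV\setminus\UU^c})=\sum_{\pe\in\UU}\deg(\pe)-d=l$ by Lemma \ref{lem:42}. Since $\Det^{-1}_{\RR}(C_{\Sigma,\VV\setminus\UU^c}[1])\simeq\Det_{\RR}(C_{\Sigma,\VV\setminus\UU^c})$ and $H^1(C_{\Sigma,\VV\setminus\UU^c}[1])=H^2(C_{\Sigma,\VV\setminus\UU^c})$, Proposition \ref{prop:30}(1) applied to each $C^{\loc}_{\TT_i}$ (then summed over $i$) and to $C_{\Sigma,\VV\setminus\UU^c}[1]$ produces exactly the top and bottom rows, which are exact because $\Ker(\Psi)$ is the torsion submodule.

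For the vertical arrows I would use the canonical triangles
\[
C_{\Sigma,\VV\setminus\UU^c}\to C_{\Sigma,\VV\setminus\cS_i^c}\to C^{\loc}_{\TT_i}\to C_{\Sigma,\VV\setminus\UU^c}[1]\qquad(1\le i\le n),
\]
which exist because, writing $L_S=\bigoplus_{v\in S}\RG_{\Iw}(K_v,\Z_p(1))$, the triangle \eqref{eq:16} exhibits $C_{\Sigma,S}$ as the fibre of $\RG_{\Iw}(K_{\Sigma}/K,\Z_p(1))\to L_S$, so for $\VV\setminus\cS_i^c\subset\VV\setminus\UU^c$ with difference $\TT_i\subset S_p(E)$ one gets that $C_{\Sigma,\VV\setminus\UU^c}$ is the fibre of $C_{\Sigma,\VV\setminus\cS_i^c}\to L_{\TT_i}$, with $L_{\TT_i}=C^{\loc}_{\TT_i}$. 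As $H^1(C_{\Sigma,\VV\setminus\cS_i^c})=0$, the long exact cohomology sequence gives an injection $D_{\TT_i}(K)=H^1(C^{\loc}_{\TT_i})\hookrightarrow H^2(C_{\Sigma,\VV\setminus\UU^c})$ with cokernel inside the torsion module $H^2(C_{\Sigma,\VV\setminus\cS_i^c})$; applying $\bigwedge^l_{\RR}(-)$, passing to $(-)_{/\tor}$ and summing over $i$ defines $f_1$. The determinant of the same triangle gives $\Det^{-1}_{\RR}(C^{\loc}_{\TT_i})\simeq\Det_{\RR}(C_{\Sigma,\VV\setminus\UU^c})\otimes_{\RR}\Det^{-1}_{\RR}(C_{\Sigma,\VV\setminus\cS_i^c})$, and since $C_{\Sigma,\VV\setminus\cS_i^c}$ has torsion cohomology (Lemma \ref{lem:37}) its trivialization identifies $\Det^{-1}_{\RR}(C_{\Sigma,\VV\setminus\cS_i^c})$ with the ideal $(\LL^{\alg}_{\Sigma,\VV\setminus\cS_i^c})\subset\RR$ (Definition \ref{defn:38}); composing with $(\LL^{\alg}_{\Sigma,\VV\setminus\cS_i^c})\hookrightarrow\RR$, tensoring with $\Det_{\RR}(C_{\Sigma,\VV\setminus\UU^c})$ and summing over $i$ defines $f_2$ (so that $\Coker(f_2)\simeq\RR/\sum_i(\LL^{\alg}_{\Sigma,\VV\setminus\cS_i^c})$, as will be needed for Theorem \ref{thm:103}). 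Once the left square is known to commute, $f_3$ is the map induced on the cokernels of the horizontal arrows, and the right square commutes automatically.

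The only substantial step is the commutativity of the left square, and the hard part will be to organize it cleanly. I would argue after base change along $\RR\to Q(\RR)$: the four modules in the square are torsion-free, hence embed in their $Q(\RR)$-localizations, so commutativity over $Q(\RR)$ suffices. Over $Q(\RR)$ the complex $Q(\RR)\otimesL_{\RR}C_{\Sigma,\VV\setminus\cS_i^c}$ is acyclic, so the triangle becomes a quasi-isomorphism $Q(\RR)\otimesL_{\RR}C^{\loc}_{\TT_i}\simeq Q(\RR)\otimesL_{\RR}C_{\Sigma,\VV\setminus\UU^c}[1]$ whose effect on $H^1$ is $Q(\RR)\otimes$ the injection $D_{\TT_i}(K)\hookrightarrow H^2(C_{\Sigma,\VV\setminus\UU^c})$, the trivialization of $C_{\Sigma,\VV\setminus\cS_i^c}$ becomes the canonical isomorphism of the determinant of an acyclic complex with $Q(\RR)$, and the two horizontal maps become the canonical base-change isomorphisms of Propositions \ref{prop:31} and \ref{prop:30}. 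Commutativity of the resulting square then follows from the compatibility of the determinant functor with quasi-isomorphisms and its additivity on triangles (the formalism recalled in \cite{Nek06}). If one prefers an explicit check, one chooses a quasi-isomorphism $C^{\loc}_{\TT_i}\simeq[P^0\to P^1]$, a quasi-isomorphism $C_{\Sigma,\VV\setminus\cS_i^c}\simeq[Q^1\to Q^2]$ and a short exact sequence of complexes representing the triangle, and derives the identity from the explicit formula for $\Psi'_C$ in the proof of Proposition \ref{prop:31}; the delicate bookkeeping is matching the connecting homomorphism of the triangle with these determinant isomorphisms.
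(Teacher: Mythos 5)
Your proof is correct and follows essentially the same route as the paper: you verify that Proposition \ref{prop:30} applies to $C^{\loc}_{\TT_i}$ and to $C_{\Sigma,\VV\setminus\UU^c}[1]$ with the same invocations of \eqref{eq:42}, Lemma \ref{lem:a13}, \eqref{eq:75}, \eqref{eq:76}, Lemma \ref{lem:37}, and Lemma \ref{lem:42}, and you build $f_1$, $f_2$, $f_3$ from the triangle $C_{\Sigma,\VV\setminus\UU^c}\to C_{\Sigma,\VV\setminus\cS_i^c}\to C^{\loc}_{\TT_i}$ exactly as the paper does (the connecting map for $f_1$, the determinant isomorphism together with $\iota_{C_{\Sigma,\VV\setminus\cS_i^c}}$ for $f_2$, and the induced map for $f_3$). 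The one genuine contribution over the paper's write-up is your treatment of the commutativity of the left square: the paper declares this ``clear,'' whereas you supply an actual argument -- embed all four corners into their $Q(\RR)$-localizations (legal since each is torsion-free), observe that over $Q(\RR)$ the triangle collapses to a quasi-isomorphism because $C_{\Sigma,\VV\setminus\cS_i^c}$ becomes acyclic, and then invoke compatibility of the determinant formalism with quasi-isomorphisms and triangles. You also spell out why $H^3\simeq\Z_p$ is pseudo-null in the case $\VV\setminus\UU^c=\Sigma_f$ (the Galois group has $\Z_p$-rank at least $2$), a point the paper implicitly relies on. These are improvements in exposition rather than a different method; the underlying constructions and the use of Proposition \ref{prop:30}, Definition \ref{defn:38}, and the triangle \eqref{eq:137} coincide with the paper's.
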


\begin{proof}
First, in order to construct the lower sequence, we show that Proposition \ref{prop:30} is applicable to $C = C_{\Sigma, \VV \setminus \UU^c}[1]$.
If $\UU^c = \emptyset$ and $\VV = \Sigma_f$, then we know by \eqref{eq:75} that $H^i(C_{\Sigma, \Sigma_f})$ is pseudo-null unless $i = 2$ (in fact it vanishes except for $H^3 \simeq \Z_p$).
If either $\UU^c \neq \emptyset$ or $\VV \subsetneqq \Sigma_f$, then by \eqref{eq:76} and Lemma \ref{lem:37}, we have $H^i(C_{\Sigma, \VV \setminus \UU^c}) = 0$ unless $i = 2$.
Thus the assumptions of Proposition \ref{prop:30} hold for $C = C_{\Sigma, \VV \setminus \UU^c}[1]$.
Moreover, $l = \chi_{\RR}(C_{\Sigma, \VV \setminus \UU^c}[1])$ holds by Lemma \ref{lem:42}.
Therefore, we have the homomorphism
\[
\Psi_{C_{\Sigma, \VV \setminus \UU^c}[1]}:
\bigwedge_{\RR}^l H^2(C_{\Sigma, \VV \setminus \UU^c}) 
\to \Det_{\RR}(C_{\Sigma, \VV \setminus \UU^c}).
\]
Note that Proposition \ref{prop:31} is also applicable unless $\UU^c = \emptyset$ and $\VV = \Sigma_f$.

Next we construct the upper sequence by applying Proposition \ref{prop:30} to $C = C^{\loc}_{\TT_i}$.
We know $H^i(C^{\loc}_{\TT_i})$ is pseudo-null unless $i = 1$ by the description \eqref{eq:42} and Lemma \ref{lem:a13}.
We also have $l = \chi_{\RR}(C^{\loc}_{\TT_i})$ by the formula in the proof of Lemma \ref{lem:42}.
Therefore, we have the homomorphism
\[
\Psi_{C^{\loc}_{\TT_i}}:
\bigwedge_{\RR}^l H^1(C^{\loc}_{\TT_i}) \to \Det^{-1}_{\RR}(C^{\loc}_{\TT_i}).
\]

Finally we construct the vertical arrows.
Since $\UU^c \cup \TT_i = \cS_i^c$, we have a triangle
\begin{equation}\label{eq:137}
C_{\Sigma, \VV \setminus \UU^c} \to C_{\Sigma, \VV \setminus \cS_i^c} \to C^{\loc}_{\TT_i}
\end{equation}
by Definition \ref{defn:32}.
Then the arrow $f_1$ is induced by the induced connecting homomorphism between $H^1$ and $H^2$.
The arrow $f_2$ is defined by
\[
\Det^{-1}_{\RR}(C^{\loc}_{\TT_i}) 
\simeq \Det_{\RR}(C_{\Sigma, \VV \setminus \UU^c}) 
\otimes_{\RR} \Det^{-1}_{\RR}(C_{\Sigma, \VV \setminus \cS_i^c})
\hookrightarrow \Det_{\RR}(C_{\Sigma, \VV \setminus \UU^c}),
\]
where the last arrow is induced by the trivialization map
\[
\iota_{C_{\Sigma, \VV \setminus \cS_i^c}}: \Det^{-1}_{\RR}(C_{\Sigma, \VV \setminus \cS_i^c}) \hookrightarrow \RR
\]
introduced just before Definition \ref{defn:38}.
The commutativity of the diagram is clear and we define $f_3$ as the induced one.
\end{proof}

We study the diagram in Proposition \ref{prop:46}.

\begin{prop}\label{prop:47}
We have an isomorphism
\[
\Coker(f_2) \simeq \frac{\RR}{\sum_{i=1}^n (\LL^{\alg}_{\Sigma, \VV \setminus \cS_i^c})}.
\]
\end{prop}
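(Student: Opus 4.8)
The plan is to compute $\Coker(f_2)$ by unwinding the definition of the arrow $f_2$ in Proposition \ref{prop:46} together with the definition of the algebraic $p$-adic $L$-functions from Definition \ref{defn:38}. Recall that $f_2$ is the sum, over $1 \leq i \leq n$, of the maps
\[
\Det^{-1}_{\RR}(C^{\loc}_{\TT_i})
\simeq \Det_{\RR}(C_{\Sigma, \VV \setminus \UU^c})
\otimes_{\RR} \Det^{-1}_{\RR}(C_{\Sigma, \VV \setminus \cS_i^c})
\hookrightarrow \Det_{\RR}(C_{\Sigma, \VV \setminus \UU^c}),
\]
the last arrow being induced by the trivialization $\iota_{C_{\Sigma, \VV \setminus \cS_i^c}} \colon \Det^{-1}_{\RR}(C_{\Sigma, \VV \setminus \cS_i^c}) \hookrightarrow \RR$. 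The first step is to trivialize the common target: since $H^1(C_{\Sigma, \VV \setminus \UU^c}) = 0$ (shown in the proof of Proposition \ref{prop:46}) and its cohomology is torsion, $\Det_{\RR}(C_{\Sigma, \VV \setminus \UU^c})$ is a free $\RR$-module of rank one, and I would fix the identification $\Det_{\RR}(C_{\Sigma, \VV \setminus \UU^c}) \simeq \RR$ coming from the trivialization $\iota_{C_{\Sigma, \VV \setminus \UU^c}}$ (equivalently, via $\iDet_{\RR}(C_{\Sigma, \VV \setminus \UU^c}) = \Fitt_{\RR}(H^2(C_{\Sigma, \VV \setminus \UU^c}))$ as discussed before Definition \ref{defn:38}).

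The key step is then to identify the image of the $i$-th component of $f_2$ inside $\RR$. Using the multiplicativity of determinant modules along the triangle \eqref{eq:137}, namely
\[
\Det^{-1}_{\RR}(C^{\loc}_{\TT_i}) \simeq \Det_{\RR}(C_{\Sigma, \VV \setminus \UU^c}) \otimes_{\RR} \Det^{-1}_{\RR}(C_{\Sigma, \VV \setminus \cS_i^c}),
\]
and the compatibility of these isomorphisms with the trivialization maps $\iota$ (which is the statement that $\iota$ is multiplicative on triangles of complexes with torsion cohomology), I would conclude that the image of the $i$-th component is exactly $\iDet_{\RR}(C_{\Sigma, \VV \setminus \cS_i^c}) \cdot \iDet_{\RR}(C^{\loc}_{\TT_i})^{-1}$ times the chosen generator, rescaled by the trivialization of $C_{\Sigma, \VV \setminus \UU^c}$. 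Since $C^{\loc}_{\TT_i} = \bigoplus_{\pe \in \TT_i} C^{\loc}_{\pe}$ and each $C^{\loc}_{\pe} \in \De^{[1,2]}(\RR)$ has $H^1(C^{\loc}_{\pe}) = D_{\pe}(K)$ of rank $\deg(\pe) > 0$ while $H^2 = Z_{\pe}(K)$ is torsion — so $C^{\loc}_{\pe}$ has positive Euler characteristic and its determinant ideal is not contained in $\RR$ but rather relates $D_{\pe}(K)$ to $Z_{\pe}(K)$ — I must be careful: the right normalization is that the composite $f_2$-component lands in $\RR$ precisely because the trivialization factors cancel the fractional part. Concretely, after the identification $\Det_{\RR}(C_{\Sigma, \VV \setminus \UU^c}) \simeq \RR$, the image of the $i$-th component of $f_2$ is the principal ideal $(\LL^{\alg}_{\Sigma, \VV \setminus \cS_i^c})$, by Definition \ref{defn:38} which defines $\LL^{\alg}_{\Sigma, \VV \setminus \cS_i^c}$ as a generator of $\iDet_{\RR}(C_{\Sigma, \VV \setminus \cS_i^c}) = \Fitt_{\RR}(H^2(C_{\Sigma, \VV \setminus \cS_i^c}))$, noting that $\VV \setminus \cS_i^c$ meets $S_p(E)$ in the CM-type $\cS_i$ so Lemma \ref{lem:37} applies.

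Summing over $i$, the image of $f_2$ is $\sum_{i=1}^n (\LL^{\alg}_{\Sigma, \VV \setminus \cS_i^c}) \subset \RR$, whence
\[
\Coker(f_2) \simeq \frac{\RR}{\sum_{i=1}^n (\LL^{\alg}_{\Sigma, \VV \setminus \cS_i^c})}
\]
as desired. I expect the main obstacle to be the bookkeeping of trivialization maps: one must check that the isomorphism $\Det^{-1}_{\RR}(C^{\loc}_{\TT_i}) \simeq \Det_{\RR}(C_{\Sigma, \VV \setminus \UU^c}) \otimes_{\RR} \Det^{-1}_{\RR}(C_{\Sigma, \VV \setminus \cS_i^c})$ induced by \eqref{eq:137} is compatible, after inverting non-zero-divisors, with the canonical isomorphisms $\Det^{-1}_{Q(\RR)}(Q(\RR) \otimesL_{\RR} C) \simeq Q(\RR)$ used to define $\iota_C$ — i.e. that passing to $Q(\RR)$ turns the determinant of the triangle into the identity on $Q(\RR)$. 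This is standard (it is the multiplicativity of the determinant functor on distinguished triangles, combined with the fact that all three complexes become acyclic over $Q(\RR)$ — here using that $D_{\TT_i}(K)$ and $H^2(C_{\Sigma, \VV \setminus \cS_i^c})$ both have rank matching $l$ and $0$ appropriately, via Lemma \ref{lem:rank} and Lemma \ref{lem:37}), but it requires care to state cleanly; I would cite \cite{Kata_10} or the standard references on determinant functors for this point rather than reprove it.
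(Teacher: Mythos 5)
Your conclusion and overall strategy match the paper's, but your description of the trivialization step contains a genuine factual error that would make the proof collapse if one tried to carry it out literally: the complex $C_{\Sigma, \VV \setminus \UU^c}$ does \emph{not} have all torsion cohomology. By Lemma \ref{lem:rank}(2) and \eqref{eq:77}, its $H^2$ has rank $l \geq 1$ over $\RR$. Consequently $Q(\RR) \otimesL_{\RR} C_{\Sigma, \VV \setminus \UU^c}$ is not acyclic, the trivialization map $\iota_{C_{\Sigma, \VV \setminus \UU^c}}$ is not defined, and if you tried $\Fitt_{\RR}(H^2(C_{\Sigma, \VV \setminus \UU^c}))$ you would get the zero ideal, not an invertible one. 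The same goes for $C^{\loc}_{\TT_i}$, whose $H^1 = D_{\TT_i}(K)$ has rank $l$, so $\iDet_{\RR}(C^{\loc}_{\TT_i})$ is likewise undefined; and your later claim that ``all three complexes become acyclic over $Q(\RR)$'' is false --- only $C_{\Sigma, \VV \setminus \cS_i^c}$ becomes acyclic (by Lemma \ref{lem:37}), which is precisely why it is only for that complex that $\iota$ and $\LL^{\alg}$ are meaningful.

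The good news is that these errors are unnecessary detours, and the paper's proof is much more direct than what you propose. One does not need any canonical trivialization of $\Det_{\RR}(C_{\Sigma, \VV \setminus \UU^c})$, nor any compatibility check between $\iota$-maps and the triangle isomorphism. Looking at the construction of $f_2$ in Proposition \ref{prop:46}, its $i$-th component is exactly the composite
\[
\Det^{-1}_{\RR}(C^{\loc}_{\TT_i}) \xrightarrow{\,\sim\,} \Det_{\RR}(C_{\Sigma, \VV \setminus \UU^c}) \otimes_{\RR} \Det^{-1}_{\RR}(C_{\Sigma, \VV \setminus \cS_i^c}) \xrightarrow{\;\id \otimes\, \iota_{C_{\Sigma, \VV \setminus \cS_i^c}}\;} \Det_{\RR}(C_{\Sigma, \VV \setminus \UU^c}).
\]
The second map is the tensor of a fixed invertible (hence flat, rank-one free) module with the single injection $\iota_{C_{\Sigma, \VV \setminus \cS_i^c}}\colon \Det^{-1}_{\RR}(C_{\Sigma, \VV \setminus \cS_i^c}) \hookrightarrow \RR$, whose cokernel is $\RR/(\LL^{\alg}_{\Sigma, \VV \setminus \cS_i^c})$ by Definition \ref{defn:38}. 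Tensoring with a free rank-one module preserves cokernels, so the image of the $i$-th component is $\LL^{\alg}_{\Sigma, \VV \setminus \cS_i^c}\cdot\Det_{\RR}(C_{\Sigma, \VV \setminus \UU^c})$, independently of any choice of basis. Summing over $i$, the image of $f_2$ is $\bigl(\sum_i(\LL^{\alg}_{\Sigma, \VV \setminus \cS_i^c})\bigr)\cdot\Det_{\RR}(C_{\Sigma, \VV \setminus \UU^c})$ and the quotient is as claimed. No determinant-functor compatibility lemma and no citation of \cite{Kata_10} are needed; the only input is that $\Det_{\RR}(C_{\Sigma, \VV \setminus \UU^c})$ is free of rank one, which holds by definition for any perfect complex.
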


\begin{proof}
By the construction of $f_2$ we immediately obtain
\[
\Coker(\Det^{-1}_{\RR}(C^{\loc}_{\TT_i}) \hookrightarrow \Det_{\RR}(C_{\Sigma, \VV \setminus \UU^c}))
\simeq \RR / (\LL^{\alg}_{\Sigma, \VV \setminus \cS_i^c}).
\]
Then the proposition follows.
\end{proof}

\begin{prop}\label{prop:45}
Let $\qu$ be a prime ideal of $\Lambda$.
\begin{itemize}
\item[(1)]
In case $\UU^c = \emptyset$ and $\VV = \Sigma_f$, we suppose that $(\Z_p)_{\qu} = 0$ (i.e., $\qu$ does not contain the augmentation ideal of $\Lambda$).
Then we have
\[
\Coker (\Psi_{C_{\Sigma, \VV \setminus \UU^c}[1]})_{\qu}
\simeq \RR_{\qu} / \Fitt_{\RR_{\qu}}(H^2(C_{\Sigma, (\Sigma_f \setminus \VV) \cup \UU^c})^{\iota}(1)_{\qu}).
\]
\item[(2)]
Suppose that $Z_{\pe}(K)_{\qu} = 0$ for each $\pe \in \TT_i$.
Then we have
\[
\Coker (\Psi_{C^{\loc}_{\TT_i}})_{\qu} \simeq \RR_{\qu} / \Fitt_{\RR_{\qu}}(Z_{\TT_i}(K)^{\iota}(1)_{\qu}).
\]
\item[(3)]
Suppose that $\pd_{\RR_{\qu}}(Z_{\pe}(K)_{\qu}) \leq 2$ and that $Z_{\pe}(K)^{\iota}(1)_{\qu} = 0$ for each $\pe \in \TT_i$.
Then $(\Psi_{C^{\loc}_{\TT_i}})_{\qu}$ is an isomorphism.
\end{itemize}
\end{prop}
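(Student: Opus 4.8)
The plan is to apply Propositions \ref{prop:30} and \ref{prop:31} to the relevant complexes, identifying the Fitting ideals that appear there with the arithmetic modules named in the statement via the duality isomorphisms \eqref{eq:93} and \eqref{eq:94}. For part (1), the complex is $C = C_{\Sigma, \VV \setminus \UU^c}[1]$, which sits in $\De^{[0,1]}(\RR)$ (or at least has $H^0(C) = 0$ after localizing at $\qu$ when $\UU^c = \emptyset$ and $\VV = \Sigma_f$, using the hypothesis $(\Z_p)_{\qu} = 0$ to kill $H^3(C_{\Sigma, \Sigma_f}) \simeq \Z_p$). Proposition \ref{prop:30}(2) then gives $\Coker(\Psi_C)_{\qu} \simeq \RR_{\qu}/\Fitt_{\RR_{\qu}}(E^1(H^1(C))_{\qu})$, i.e. $\RR_{\qu}/\Fitt_{\RR_{\qu}}(E^1(H^2(C_{\Sigma, \VV \setminus \UU^c}))_{\qu})$. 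The task is then to rewrite $E^1(H^2(C_{\Sigma, \VV\setminus\UU^c}))$ in terms of $H^2(C_{\Sigma,(\Sigma_f\setminus\VV)\cup\UU^c})^{\iota}(1)$. Since $H^i(C_{\Sigma,\VV\setminus\UU^c})$ vanishes for $i \ne 2,3$ and is torsion, the spectral sequence (or the long exact sequence) coming from $\RHom_{\RR}(C_{\Sigma,\VV\setminus\UU^c},\RR)^{\iota}(1)[-3] \simeq C_{\Sigma,(\Sigma_f\setminus\VV)\cup\UU^c}$ in \eqref{eq:93} degenerates appropriately after localizing at $\qu$, identifying $E^1(H^2(\,\cdot\,))$ with (a localization of) $H^2$ of the dual complex twisted by $\iota$ and $(1)$; note $(\Sigma_f\setminus\VV)\cup\UU^c = \Sigma_f\setminus(\VV\setminus\UU^c)$.

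For part (2), the complex is $C = C^{\loc}_{\TT_i}$, which lies in $\De^{[1,2]}(\RR)$ with $H^1 \simeq D_{\TT_i}(K)$ and $H^2 \simeq Z_{\TT_i}(K)$ by \eqref{eq:42}; after localizing at $\qu$ the hypothesis $Z_{\pe}(K)_{\qu} = 0$ for $\pe \in \TT_i$ forces $H^2(C^{\loc}_{\TT_i})_{\qu} = 0$, so $\RR_{\qu}\otimesL_{\RR} C^{\loc}_{\TT_i} \in \De^{[1,1]}(\RR_{\qu})$, and in particular $\RR_{\qu}\otimesL_\RR C^{\loc}_{\TT_i}[-1]\cdot[1]$... more directly, Proposition \ref{prop:30}(2) applies with the hypothesis "$H^0(C)_{\qu}=0$" automatic and gives $\Coker(\Psi_{C^{\loc}_{\TT_i}})_{\qu} \simeq \RR_{\qu}/\Fitt_{\RR_{\qu}}(E^1(D_{\TT_i}(K))_{\qu})$. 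Then I would use the self-duality \eqref{eq:94}, $\RHom_{\RR}(C^{\loc}_v,\RR)^{\iota}(1)[-2] \simeq C^{\loc}_v$, to compute: taking cohomology of the dual, $E^1(H^1(C^{\loc}_{\TT_i}))$ is identified with $H^2$ of $C^{\loc}_{\TT_i}{}^{\iota}(1)[-2]\cdot$... concretely with $Z_{\TT_i}(K)^{\iota}(1)$, using that the other cohomology group $E^2(H^2(C^{\loc}_{\TT_i}))$ contributes only to a different degree and that $E^0(H^2)$, $E^2(H^1)$ vanish or are controlled. This yields $\Coker(\Psi_{C^{\loc}_{\TT_i}})_{\qu} \simeq \RR_{\qu}/\Fitt_{\RR_{\qu}}(Z_{\TT_i}(K)^{\iota}(1)_{\qu})$.

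For part (3), combine the two strengthened hypotheses: $Z_{\pe}(K)^{\iota}(1)_{\qu} = 0$ for $\pe\in\TT_i$ means the Fitting ideal in part (2) is the unit ideal, so $\Coker(\Psi_{C^{\loc}_{\TT_i}})_{\qu} = 0$, i.e. $(\Psi_{C^{\loc}_{\TT_i}})_{\qu}$ is surjective; and $\pd_{\RR_{\qu}}(Z_{\pe}(K)_{\qu}) \le 2$ for all $\pe\in\TT_i$ gives $\pd_{\RR_{\qu}}(H^2(C^{\loc}_{\TT_i})_{\qu}) \le 2$, which is exactly the hypothesis of Proposition \ref{prop:30}(3); that proposition then tells us $(\Psi_{C^{\loc}_{\TT_i}})_{\qu}$ is bijective. (Alternatively, surjectivity plus injectivity — the kernel being the torsion submodule of a rank-$l$ module that becomes free of rank $l$ over $\RR_{\qu}$ — gives bijectivity directly, but invoking \ref{prop:30}(3) is cleaner.)

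The main obstacle I anticipate is the bookkeeping in parts (1) and (2): unwinding the duality quasi-isomorphisms \eqref{eq:93}, \eqref{eq:94} to get a clean identification of $E^1$ of the relevant $H^i$ with the twisted-$\iota$ arithmetic module, making sure all the neighboring Ext-terms genuinely vanish after localization at $\qu$ (using pseudo-nullity of $Z_\pe(K)$, the vanishing of $H^1(C_{\Sigma,\VV\setminus\UU^c})$, and the hypotheses on $\qu$), so that the relevant spectral sequence has a single nonzero entry in the degree we care about. The determinant/Fitting-ideal identifications themselves are formal once Proposition \ref{prop:30} is in hand; it is the passage through Poitou–Tate/local duality that requires care.
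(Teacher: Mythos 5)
Your proposal follows essentially the same route as the paper: apply Proposition \ref{prop:30}(2) to $C_{\Sigma, \VV\setminus\UU^c}[1]$ and to $C^{\loc}_{\TT_i}$, then translate the resulting $E^1$-terms via the duality isomorphisms \eqref{eq:93}, \eqref{eq:94} into $H^2(C_{\Sigma,(\Sigma_f\setminus\VV)\cup\UU^c})^{\iota}(1)$ and $Z_{\TT_i}(K)^{\iota}(1)$, and apply Proposition \ref{prop:30}(3) for part (3). Two small points to tighten in (3). First, Proposition \ref{prop:30}(3) has two hypotheses, and you only verify $\pd_{\RR_\qu}(H^2(C^{\loc}_{\TT_i})_\qu)\le 2$; you must also show $\RR_\qu\otimesL_\RR C^{\loc}_{\TT_i}\in \De^{[1,2]}(\RR_\qu)$. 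The paper gets this from the hypothesis $Z_\pe(K)^{\iota}(1)_\qu=0$: it kills $H^2$ of $(C^{\loc}_{\TT_i})^{\iota}(1)$ after localization, so that complex lies in $\De^{[0,1]}(\RR_\qu)$, and then \eqref{eq:94} forces $(C^{\loc}_{\TT_i})_\qu\in\De^{[1,2]}(\RR_\qu)$. Second, your first-listed route for (3) --- ``the Fitting ideal in part (2) is the unit ideal, so $\Coker(\Psi)_\qu=0$'' --- is not available as stated, because part (2) requires $Z_\pe(K)_\qu=0$, which is not among the hypotheses of part (3); the $\iota$ and Tate twist genuinely move the support relative to $\Lambda$. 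Your fallback of invoking Proposition \ref{prop:30}(3) directly, once the $\De^{[1,2]}$ condition is supplied as above, is the correct and complete argument, and matches the paper.
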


\begin{proof}
(1)
By the assumption, we have $H^1(C_{\Sigma, \VV \setminus \UU^c}) = 0$ and $H^3(C_{\Sigma, \VV \setminus \UU^c})_{\qu} = 0$ (when $\UU^c \neq \emptyset$ or $\VV \subsetneqq \Sigma_f$, we do not have to take the localization).
Hence we can apply Proposition \ref{prop:30}(2) to obtain
\[
\Coker (\Psi_{C_{\Sigma, \VV \setminus \UU^c}[1]})_{\qu} 
\simeq \RR_{\qu} / \Fitt_{\RR_{\qu}}(E^1(H^2(C_{\Sigma, \VV \setminus \UU^c}))_{\qu}).
\]
By \eqref{eq:93}, we also have
\[
E^1(H^2(C_{\Sigma, \VV \setminus \UU^c}))_{\qu} 
\simeq H^2(C_{\Sigma, (\Sigma_f \setminus \VV) \cup \UU^c})^{\iota}(1)_{\qu}.
\]

(2)
Since $H^0(C^{\loc}_{\TT_i}) = 0$ and $H^2(C^{\loc}_{\TT_i})_{\qu} = 0$ by assumption, we can apply Proposition \ref{prop:30}(2) to obtain
\[
\Coker (\Psi_{C^{\loc}_{\TT_i}})_{\qu} 
\simeq \RR_{\qu} / \Fitt_{\RR_{\qu}}(E^1(H^1(C^{\loc}_{\TT_i}))_{\qu}).
\]
By \eqref{eq:94}, we also have
\[
E^1(H^1(C^{\loc}_{\TT_i}))_{\qu} 
\simeq H^2(C_{\TT_i}^{\loc})^{\iota}(1)_{\qu}
\simeq Z_{\TT_i}(K)^{\iota}(1)_{\qu}.
\]

(3) Since $H^2((C_{\TT_i}^{\loc})^{\iota}(1))_{\qu} = 0$ by assumption, we have $(C_{\TT_i}^{\loc})^{\iota}(1)_{\qu} \in \De^{[0, 1]}(\RR_{\qu})$.
By the duality \eqref{eq:94}, it follows that $(C_{\TT_i}^{\loc})_{\qu} \in \De^{[1, 2]}(\RR_{\qu})$.
We also have $\pd_{\RR_{\qu}}(H^2(C_{\TT_i}^{\loc})_{\qu}) \leq 2$ by assumption.
Therefore, we may apply Proposition \ref{prop:30}(3).
\end{proof}

\begin{proof}[Proof of Theorem \ref{thm:103}]
The snake lemma applied to the diagram in Proposition \ref{prop:46}, together with Proposition \ref{prop:47}, gives us an exact sequence
\begin{align}\label{eq:5.9}
0 \to \Coker \left(\Ker(f_2) \to \Ker(f_3) \right)
& \to \frac{\left(\bigwedge_{\RR}^l H^2(C_{\Sigma, \VV \setminus \UU^c}) \right)_{/\tor}}
{\sum_{i=1}^n \bigwedge_{\RR}^l D_{\TT_i}(K)}\\
& \qquad \to \frac{\RR}{\sum_{i=1}^n (\LL^{\alg}_{\Sigma, \VV \setminus \cS_i^c})}
\to \Coker(f_3)
\to 0.
\end{align}
By Proposition \ref{prop:45}(3), the domain of $f_3$ vanishes after localization at $\qu$.
Moreover, we know the description of the target module of $f_3$ after localization at $\qu$ by Proposition \ref{prop:45}(1).
Therefore, Theorem \ref{thm:103} follows from \eqref{eq:5.9} and localization at $\qu$.
\end{proof}

Note that sequence \eqref{eq:5.9} can be viewed as a more general version of Theorem \ref{thm:103} that incorporates the idea of \cite[Theorem 5.9]{BCG+b}.
However, the descriptions of $\Ker(f_2)$, $\Ker(f_3)$, and $\Coker(f_3)$ seem to be complicated, so we stated only Theorem \ref{thm:103}.

\subsection{How to recover the previous result}\label{subsec:132}

In order to illustrate a relation with the work \cite{BCG+b}, we recall the notion of higher Chern classes, which is a key idea in \cite{BCG+}, \cite{BCG+b}.
For the sake of brevity, we only define Chern classes over (finite products of) regular local rings.

\begin{defn}\label{defn:136}
Let $A$ be a finite product of regular local rings and $m$ a positive integer.
We define $Z^m(A)$ as the free $\Z$-module on the set of height $m$ prime ideals of $A$.
For each $A$-module $M$ whose codimension is at least $m$, we define the $m$-th Chern class of $M$ by
\[
c_m(M) = \sum_{\qu} \length_{A_{\qu}}(M_{\qu}) [\qu] \in Z^m(A),
\]
where $\qu$ runs over the height $m$ primes of $A$.
\end{defn}

We now recall a main theorem of \cite{BCG+b} and will deduce it from Theorem \ref{thm:103}.
Recall that when $p \nmid [K: \tilde{E}]$, the ring $\RR$ is a product of regular local rings, so we have the notions of greatest common divisors and of characteristic ideals.

\begin{thm}[{\cite[Theorem 5.6]{BCG+b}}]\label{thm:81}
Suppose $p \nmid [K: \tilde{E}]$ and set $\Sigma_f = \VV = S_p(E)$.
Let $\theta \in \RR$ be a greatest common divisor of $\LL^{\alg}_{\Sigma, \cS_i}$ for $1 \leq i \leq n$.
Let $\theta_0 \in \RR$ be a generator of the characteristic ideal $\cha_{\RR}(\XX_{\UU}(K)_{\tor})$.
Then $\theta_0$ divides $\theta$ and we have 
\begin{align}
& c_2 \left(\frac{\RR}{\sum_{i=1}^n \theta^{-1} (\LL^{\alg}_{\Sigma, \cS_i})}\right)
\equiv  c_2
\left(\left(\frac{\left(\bigwedge_{\RR}^l \XX_{\UU}(K) \right)_{/\tor}}{\sum_{i=1}^n \bigwedge_{\RR}^l D_{\TT_i}(K)}\right)_{\PN}\right)
+ c_2 \left( \frac{\theta}{\theta_0} \frac{\RR}{\Fitt_{\RR}(E^2(\XX_{\UU^c}(K))^{\iota}(1))}\right)
\end{align}
in $Z^2(\RR)$, where the congruence means the equality outside the support of $Z_{\pe}(K)$ for $\pe \in \UU^c$ and that of $Z_{\pe}(K)^{\iota}(1)$ for $\pe \in \UU$.
\end{thm}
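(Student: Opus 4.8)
The plan is to specialize Theorem \ref{thm:103} to the hypotheses $p \nmid [K:\tilde E]$, $\Sigma_f = \VV = S_p(E)$ (so $\UU \subset S_p(E)$, $\UU^c = S_p(E) \setminus \UU$, $\VV \setminus \UU^c = \UU$, $\TT_i = \UU \setminus \cS_i$), and then to translate the resulting short exact sequence of $\RR$-modules into an identity of second Chern classes by applying additivity of $c_2$ over the product of regular local rings $\RR$. First I would check that the hypotheses of Theorem \ref{thm:103} are automatically met for the relevant $\qu$: by Corollary \ref{cor:a55}, for any height two prime $\qu$ of $\RR$ outside the support of $Z_\pe(K)$ one has $\pd_{\RR_\qu}(Z_\pe(K)_\qu) \le 2$ (indeed $Z_\pe(K)_\qu = 0$, so the projective dimension is $-\infty$), and by hypothesis $\qu$ also avoids the support of $Z_\pe(K)^\iota(1)$; so Theorem \ref{thm:103} gives, for every such $\qu$, an exact sequence whose terms are exactly the three modules appearing in the theorem, localized at $\qu$. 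Note $H^2(C_{\Sigma, \VV \setminus \UU^c}) = H^2(C_{\Sigma, \UU}) $, and by \eqref{eq:77} this fits in $0 \to \XX_\UU(K) \to H^2(C_{\Sigma,\UU}) \to Z^0_{\UU^c}(K) \to 0$ with the last term pseudo-null by Lemma \ref{lem:a13}; similarly $H^2(C_{\Sigma, (\Sigma_f \setminus \VV) \cup \UU^c}) = H^2(C_{\Sigma, \UU^c})$, which by \eqref{eq:77} and \eqref{eq:42}-type reasoning is closely tied to $\XX_{\UU^c}(K)$, so $E^2$ of it is (off the $Z_\pe$ supports) $E^2(\XX_{\UU^c}(K))$.

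Next I would introduce the gcd $\theta$ of the $\LL^{\alg}_{\Sigma,\cS_i}$ and the characteristic-ideal generator $\theta_0$ of $\XX_\UU(K)_{\tor}$, and prove the divisibility $\theta_0 \mid \theta$. For this, recall from Lemma \ref{lem:37} and Definition \ref{defn:38} that $(\LL^{\alg}_{\Sigma,\cS_i}) = \Fitt_\RR(H^2(C_{\Sigma,\cS_i}))$, and $H^2(C_{\Sigma,\cS_i})$ has $\XX_{\cS_i}(K)$ as a submodule with pseudo-null cokernel; since $\XX_\UU(K) \twoheadrightarrow \XX_{\cS_i}(K)$ (more primes split $\Rightarrow$ bigger module; actually $\XX_\UU(K) \to \XX_{\cS_i}(K)$ since $\cS_i \subset \UU$), the characteristic ideal of $\XX_{\cS_i}(K)_{\tor}$ is divisible by $\cha_\RR(\XX_\UU(K)_{\tor})$... more carefully: $\XX_{\cS_i}(K)$ is torsion of characteristic ideal divisible by $(\theta_0)$, hence $\theta_0 \mid \LL^{\alg}_{\Sigma,\cS_i}$ up to the easy torsion factor $Z^0$, and taking gcd gives $\theta_0 \mid \theta$. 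Then I would divide the middle and right terms of the Theorem \ref{thm:103} sequence by $\theta$: multiplying an invertible ideal by $\theta^{-1}$ replaces $\RR/\sum(\LL^{\alg}_{\Sigma,\cS_i})$ by $\RR/\sum \theta^{-1}(\LL^{\alg}_{\Sigma,\cS_i})$ and $\RR/\Fitt_\RR(\cdots)$ by $\frac{\theta}{\theta_0}\cdot\frac{\RR}{\Fitt_\RR(E^2(\XX_{\UU^c}(K))^\iota(1))}$, using that $\Fitt_\RR(H^2(C_{\Sigma,\UU}))$ contributes the factor $\theta_0$; this is the algebraic bookkeeping in the Appendix \S\ref{sec:87} that the introduction refers to, so I would cite the relevant Fitting-ideal lemma there.

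Finally, having the short exact sequence (after the $\theta$-twist) with all three nonzero terms supported in codimension $\ge 2$ away from the excluded primes, I would apply additivity of the second Chern class: $c_2$ of the middle term equals $c_2$ of the left term plus $c_2$ of the right term, where the left term of Theorem \ref{thm:103} becomes, after the identifications above and passing to the pseudo-null quotient (the $(-)_\qu$ for all height-two $\qu$ is exactly recording $c_2$ of the pseudo-null part), the module $\Bigl(\bigl(\bigwedge^l_\RR \XX_\UU(K)\bigr)_{/\tor} \big/ \sum_i \bigwedge^l_\RR D_{\TT_i}(K)\Bigr)_{\PN}$ appearing in Theorem \ref{thm:81}. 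The congruence $\equiv$ "outside the supports of $Z_\pe(K)$, $\pe\in\UU^c$, and $Z_\pe(K)^\iota(1)$, $\pe\in\UU$" is precisely the locus where Theorem \ref{thm:103} was applicable, so the equality of Chern classes holds there. The main obstacle I anticipate is not the Chern-class additivity (which is formal) but the precise Fitting-ideal manipulation showing that dividing through by $\theta$ replaces the $E^2$-term by $\frac{\theta}{\theta_0}\frac{\RR}{\Fitt_\RR(E^2(\XX_{\UU^c}(K))^\iota(1))}$ with exactly the factor $\theta/\theta_0$ and no spurious extra factors — this requires the relation $\Fitt_\RR(H^2(C_{\Sigma,\UU})) = (\theta_0)\cdot(\text{unit or easy factor})$ together with the behavior of Fitting ideals under the pseudo-null perturbations coming from the $Z^0$ and $Z_\pe$ terms, which is exactly what the appendix \S\ref{sec:87} is set up to handle and where I would need to be careful that $H^2(C_{\Sigma,\UU})_{\tor}$ versus $\XX_\UU(K)_{\tor}$ differ only by a pseudo-null module so their characteristic ideals agree.
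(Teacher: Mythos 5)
Your overall plan matches the paper's proof: specialize Theorem~\ref{thm:103} to height-two primes, absorb the factor $\theta$, and then use additivity of $c_2$ over the regular local ring $\RR$. However there are two concrete gaps that need closing.

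First, the hypothesis check for Theorem~\ref{thm:103} is incomplete as you have stated it. Theorem~\ref{thm:103} requires $\pd_{\RR_{\qu}}(Z_{\pe}(K)_{\qu}) \leq 2$ for all $\pe \in \TT_i$, and $\TT_i = \UU \setminus \cS_i$ is a subset of $\UU$, not of $\UU^c$. The congruence in Theorem~\ref{thm:81} only excludes the support of $Z_{\pe}(K)$ for $\pe \in \UU^c$ (for $\pe \in \UU$ it excludes the support of $Z_{\pe}(K)^{\iota}(1)$, which is a different condition), so your argument that ``$\qu$ is outside the support, hence $Z_{\pe}(K)_{\qu} = 0$'' does not apply to $\pe \in \TT_i$. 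What the paper does instead is observe that the needed bound holds for \emph{every} height-two $\qu$ and \emph{every} $\pe \in S_p(E)$ unconditionally, by combining Corollary~\ref{cor:a55} with the relation $r_{\pe} = \deg(\pe) + 1$: if $\deg(\pe) = 1$, the bound is immediate from Corollary~\ref{cor:a55}; if $\deg(\pe) \geq 2$, then $r_{\pe} \geq 3$ forces $Z_{\pe}(K)_{\qu} = 0$ whenever $\height(\qu) = 2$. You need this case split rather than the support exclusion.

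Second, the central step identifying the $\theta_0$-factor is left too vague. You say it is ``bookkeeping in the appendix,'' but the actual mechanism is not just Proposition~\ref{prop:91}; one also needs the duality input from Proposition~\ref{prop:20}(1). Concretely, after localizing at $\qu$ (where, by the support assumptions, $H^2(C_{\Sigma,\UU^c})^{\iota}(1)_{\qu} \simeq \XX_{\UU^c}(K)^{\iota}(1)_{\qu}$ and $H^2(C_{\Sigma,\UU})_{\qu} \simeq \XX_{\UU}(K)_{\qu}$), Proposition~\ref{prop:20}(1) gives $E^1(\XX_{\UU^c}(K))^{\iota}(1)_{\qu} \simeq (\XX_{\UU}(K)_{\tor})_{\qu}$, so that $\cha_{\RR_{\qu}}(\XX_{\UU^c}(K)^{\iota}(1)_{\qu}) = \theta_0 \RR_{\qu}$. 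Only \emph{then} does Proposition~\ref{prop:91} convert $\Fitt_{\RR_{\qu}}(H^2(C_{\Sigma,\UU^c})^{\iota}(1)_{\qu})$ into $\theta_0 \cdot \Fitt_{\RR_{\qu}}(E^2(\XX_{\UU^c}(K))^{\iota}(1)_{\qu})$. Your proposed alternative route to $\theta_0 \mid \theta$ via the surjection $\XX_{\UU}(K) \twoheadrightarrow \XX_{\cS_i}(K)$ is not developed far enough to be checkable, and in any case it is the identification above, not $\theta_0 \mid \theta$ as an isolated statement, that produces the factor $\theta/\theta_0$ in the final formula. The observation that $(\RR/\sum(\LL^{\alg}_{\Sigma,\cS_i}))_{\PN} = \theta\RR/\sum(\LL^{\alg}_{\Sigma,\cS_i}) \simeq \RR/\sum(\theta^{-1}\LL^{\alg}_{\Sigma,\cS_i})$ is the correct way to introduce $\theta$ into the first term, and once the two points above are fixed, the snake-lemma output of Theorem~\ref{thm:103} together with $c_2$-additivity yields the claimed congruence.
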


Before the proof, we observe the following proposition.

\begin{prop}\label{prop:20}
\begin{itemize}
\item[(1)]
We have an exact sequence
\begin{align}
0 & \to E^1(H^2(C_{\Sigma, (\Sigma_f \setminus \VV) \cup  \UU^c}))^{\iota}(1) 
\to H^2(C_{\Sigma, \VV \setminus \UU^c}) 
\to H^2(C_{\Sigma, \VV \setminus \UU^c})^{**}\\
& \to E^2(H^2(C_{\Sigma, (\Sigma_f \setminus \VV) \cup \UU^c}))^{\iota}(1)
\to W_{(\Sigma_f \setminus \VV) \cup \UU^c},
\end{align}
where in general we put $W_S = \Z_p$ if $S = \emptyset$ and $W_S = 0$ otherwise.
\item[(2)]
For each $\pe \in S_p(E)$, we have an exact sequence
\[
0 \to E^1(H^2_{\Iw}(K_{\pe}, \Z_p))^{\iota} 
\to D_{\pe}(K) \to D_{\pe}(K)^{**}
\to E^2(H^2_{\Iw}(K_{\pe}, \Z_p))^{\iota} 
\to 0.
\]
\end{itemize}
\end{prop}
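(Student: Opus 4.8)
The plan is to deduce both exact sequences from the duality isomorphisms \eqref{eq:93} and \eqref{eq:94} together with the hyperext (universal coefficient) spectral sequence
\[
E_2^{p,q} = E^p\!\bigl(H^{-q}(C^{\sharp})\bigr) \;\Longrightarrow\; H^{p+q}\!\bigl(\RHom_{\RR}(C^{\sharp},\RR)\bigr)
\]
for an appropriate ``linear bidual'' complex $C^{\sharp}$, reading off its five-term exact sequence of low-degree terms. Since $E^i_{\RR}(-)=E^i_{\Lambda}(-)$ and $\Lambda$ is a finite product of regular local rings (\S\ref{subsec:a22}), all the cohomological bookkeeping may be carried out over $\Lambda$, where the $E^i$ and codimension behave well; in particular $\Hom_{\RR}(-,\RR)$ is insensitive to modules of codimension $\ge 2$, so $E^0$ of any module that agrees with $M^{*}$ outside codimension $\ge 2$ equals $M^{**}$.

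\emph{Proof of (1).} Put $S=\VV\setminus\UU^{c}$ and $S'=(\Sigma_f\setminus\VV)\cup\UU^{c}$; since $\UU^{c}\subset S_p(E)\subset\VV$ one has $S'=\Sigma_f\setminus S$. Write $M=H^2(C_{\Sigma,S})$, $N=H^2(C_{\Sigma,S'})$ and set $C^{\sharp}=\RHom_{\RR}(C_{\Sigma,S},\RR)[-2]$. By the descriptions of \S\ref{subsec:60} (cf. the proof of Proposition \ref{prop:46}), $C_{\Sigma,S}$ has cohomology concentrated in degree $2$, equal to $M$, except that $H^3(C_{\Sigma,S})\simeq\Z_p$ in the single case $\VV=\Sigma_f$, $\UU=S_p(E)$ (equivalently $S'=\emptyset$). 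Running the hyperext spectral sequence for $C_{\Sigma,S}$ itself shows that $H^{-2}(\RHom_{\RR}(C_{\Sigma,S},\RR))$ agrees with $E^0(M)=M^{*}$ outside codimension $\ge 2$, while \eqref{eq:93} identifies $\RHom_{\RR}(C_{\Sigma,S},\RR)$ with $C_{\Sigma,S'}^{\iota}(1)[3]$, so that $H^{-1}(\RHom_{\RR}(C_{\Sigma,S},\RR))=H^2(C_{\Sigma,S'})^{\iota}(1)=N^{\iota}(1)$. Hence $C^{\sharp}$ has $H^0(C^{\sharp})$ agreeing with $M^{*}$ outside codimension $\ge 2$ and $H^1(C^{\sharp})=N^{\iota}(1)$, and by biduality of perfect complexes $\RHom_{\RR}(C^{\sharp},\RR)\simeq C_{\Sigma,S}[2]$ has $H^0=M$ and $H^1=H^3(C_{\Sigma,S})$. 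The spectral sequence for $\RHom_{\RR}(C^{\sharp},\RR)$ is concentrated in the two rows $q=0,-1$, and its five-term exact sequence reads
\[
0\to E^1\!\bigl(N^{\iota}(1)\bigr)\to M\xrightarrow{\;\alpha_M\;}E^0\!\bigl(H^0(C^{\sharp})\bigr)\to E^2\!\bigl(N^{\iota}(1)\bigr)\to H^1\!\bigl(\RHom_{\RR}(C^{\sharp},\RR)\bigr),
\]
the two edge maps composing to the canonical map to the bidual. Finally $E^0(H^0(C^{\sharp}))=M^{**}$, $E^i(N^{\iota}(1))=E^i(N)^{\iota}(1)$ (Tate twist normalized as in \eqref{eq:93}), and $H^1(\RHom_{\RR}(C^{\sharp},\RR))=H^3(C_{\Sigma,S})$ — which is $0$ when $S'\neq\emptyset$ and equals $\Z_p=W_{\emptyset}$ when $S'=\emptyset$. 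This is the claimed sequence, with terminal term $W_{S'}$.

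\emph{Proof of (2).} This is the local analogue, with \eqref{eq:94} replacing \eqref{eq:93}. By \eqref{eq:42} and Lemma \ref{lem:a13}, $C^{\loc}_{\pe}=\RG_{\Iw}(K_{\pe},\Z_p(1))$ has $H^1=D_{\pe}(K)$ and $H^2=Z_{\pe}(K)$, the latter pseudo-null, and \eqref{eq:94} gives $\RHom_{\RR}(C^{\loc}_{\pe},\RR)\simeq (C^{\loc}_{\pe})^{\iota}(1)[2]$. Running the same construction with $C^{\loc}_{\pe}$ in the role of $C_{\Sigma,S}$ — passing through the truncation triangle $D_{\pe}(K)[-1]\to C^{\loc}_{\pe}\to Z_{\pe}(K)[-2]$, since $C^{\loc}_{\pe}$ is not cohomologically pure — and rewriting $Z_{\pe}(K)=H^2_{\Iw}(K_{\pe},\Z_p)(1)$ yields
\[
0\to E^1\!\bigl(H^2_{\Iw}(K_{\pe},\Z_p)\bigr)^{\iota}\to D_{\pe}(K)\to D_{\pe}(K)^{**}\to E^2\!\bigl(H^2_{\Iw}(K_{\pe},\Z_p)\bigr)^{\iota}\to 0;
\]
the sequence is right-exact (no ``$W$''-term) because here the extra cohomology $H^2(C^{\loc}_{\pe})=Z_{\pe}(K)$ is already accounted for by the last term $E^2(H^2_{\Iw}(K_{\pe},\Z_p))^{\iota}$, rather than overflowing past it as the $H^3=\Z_p$ did in the exceptional case of (1).

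The one genuinely nonformal point is this exceptional bookkeeping: pinning down the precise contribution of $H^3(C_{\Sigma,\Sigma_f})\simeq\Z_p$ (and, in (2), of $Z_{\pe}(K)$) and verifying it is exactly $W_{S'}$ (resp. is harmlessly absorbed), together with checking that the composite of the spectral-sequence edge maps is the canonical map to the bidual. The rest is careful but routine — chiefly tracking the interaction of the involution $\iota$ with the Tate twist so the error terms come out as $E^i(\cdot)^{\iota}(1)$ in the conventions of \eqref{eq:93} and \eqref{eq:94}, and using that the raw middle term of the spectral sequence agrees with $M^{*}$ only outside codimension $\ge 2$, which is exactly enough to identify its $E^0$ with $M^{**}$.
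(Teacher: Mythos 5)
Your strategy --- the duality isomorphisms \eqref{eq:93}, \eqref{eq:94} combined with the hyperext spectral sequence --- is the same one the paper invokes through its citation of \cite[Propositions 2.7, 2.11]{BCG+b}, and your treatment of part (1), including the identification of the terminal term with $W_{(\Sigma_f\setminus\VV)\cup\UU^c} = H^3(C_{\Sigma,\VV\setminus\UU^c})$, is sound.

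Part (2), however, contains a genuine gap. Running the same machinery with $C^\sharp = \RHom_\RR(C^\loc_\pe,\RR)[-1]$, so that $\RHom_\RR(C^\sharp,\RR)\simeq C^\loc_\pe[1]$, the five-term exact sequence of low-degree terms reads
\[
0 \to E^1(H^2_{\Iw}(K_\pe,\Z_p))^\iota \to D_\pe(K) \to D_\pe(K)^{**} \to E^2(H^2_{\Iw}(K_\pe,\Z_p))^\iota \to Z_\pe(K),
\]
with terminal term $H^1(\RHom_\RR(C^\sharp,\RR)) = H^2(C^\loc_\pe) = Z_\pe(K)$ --- exactly analogous to the terminal $W_{S'}$ in (1). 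The claimed right-exactness $\cdots \to E^2(\cdots)^\iota \to 0$ is therefore the nontrivial assertion that this last edge map is \emph{zero}, and your justification --- that $Z_\pe(K)$ is ``already accounted for'' by the $E^2$-term --- begs the question: it is this very $Z_\pe(K)$ that appears as the potential overflow target, just as $W_{S'}$ did in (1), and nothing has been absorbed. The right-exactness cannot be waved away; it is used downstream (the top row of the snake-lemma diagram in the proof of Theorem~\ref{thm:105} needs it). To close the gap one must argue the edge map vanishes: if $\deg\pe\geq 2$ the module $H^2_{\Iw}(K_\pe,\Z_p)\simeq Z_\pe(K)(-1)$ has $\Lambda$-codimension $r_\pe=\deg\pe+1>2$, so the $E^2$-term vanishes outright and there is nothing to prove. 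If $\deg\pe=1$, observe that $g\in G_\pe(K/E)$ acts trivially on $Z_\pe(K)$ but acts by the scalar $\chi(g)$ on $H^2_{\Iw}(K_\pe,\Z_p)^\iota$, so $g-\chi(g)$ lies in $\Ann_\RR\bigl(E^2(H^2_{\Iw}(K_\pe,\Z_p))^\iota\bigr)$; since $\chi|_{G_\pe(K/E)}$ is nontrivial and $Z_\pe(K)$ is $\Z_p$-torsion-free, no nonzero element of $Z_\pe(K)$ is annihilated by $g-\chi(g)$ with $\chi(g)\neq 1$, whence $\Hom_\RR\bigl(E^2(H^2_{\Iw}(K_\pe,\Z_p))^\iota,\,Z_\pe(K)\bigr)=0$. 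You correctly flag this as the ``one genuinely nonformal point,'' but the argument must actually be supplied.
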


\begin{proof}
This proposition is a direct generalization of \cite[Propositions 2.7 and 2.11]{BCG+b} (or \cite[Corollary 4.1.6 and Theorem 4.1.14]{BCG+} in a special case).
The key ingredient is the duality theorems \eqref{eq:93}, \eqref{eq:94}.
\end{proof}

\begin{proof}[Proof of Theorem \ref{thm:81}]
We deduce Theorem \ref{thm:81} from Theorem \ref{thm:103}.
Let $\qu$ be a prime ideal of $\RR$ with $\height(\qu) = 2$ outside the supports of the stated modules.
By Corollary \ref{cor:a55} and $r_{\fp} = \deg(\fp) + 1$, the assumption $\height(\qu) = 2$ implies $\pd_{\RR_{\qu}}(Z_{\fp}(K)_{\qu}) \leq 2$ for any $\fp \in S_p(E)$.
Therefore, the condition on $\qu$ required in Theorem \ref{thm:103} holds, so we may apply Theorem \ref{thm:103}.

By the assumption on $\qu$, we have isomorphisms
\[
\XX_{\UU^c}(K)^{\iota}(1)_{\qu} \simeq H^2(C_{\Sigma, \UU^c})^{\iota}(1)_{\qu}
\]
by \eqref{eq:77}, and
\[
\XX_{\UU}(K)_{\qu} \simeq H^2(C_{\Sigma, \UU})_{\qu}
\]
by \eqref{eq:75} and \eqref{eq:77} (note that $\Sigma_f \setminus \UU^c = \UU$).
Combining these with Proposition \ref{prop:20}(1), we obtain
\[
E^1(\XX_{\UU^c}(K))^{\iota}(1)_{\qu} \simeq (\XX_{\UU}(K)_{\tor})_{\qu}.
\]
Therefore,
\[
\cha_{\RR_{\qu}}(\XX_{\UU^c}(K)^{\iota}(1)_{\qu})
= \cha_{\RR_{\qu}}(E^1(\XX_{\UU^c}(K))^{\iota}(1)_{\qu})
= \theta_0 \RR_{\qu}.
\]
Then Proposition \ref{prop:91} implies that
\begin{align}\label{eq:133}
\Fitt_{\RR_{\qu}}(H^2(C_{\Sigma, \UU^c})^{\iota}(1)_{\qu}) 
&= \cha_{\RR_{\qu}}(\XX_{\UU^c}(K)^{\iota}(1)_{\qu}) 
\Fitt_{\RR_{\qu}}(E^2(\XX_{\UU^c}(K))^{\iota}(1)_{\qu})\\
&= \theta_0 \Fitt_{\RR_{\qu}}(E^2(\XX_{\UU^c}(K))^{\iota}(1)_{\qu}).
\end{align}

It is easy to see that
\[
\left( \frac{\RR}{\sum_{i=1}^n (\LL^{\alg}_{\Sigma, \cS_i})} \right)_{\PN} 
= \frac{\theta \RR}{\sum_{i=1}^n (\LL^{\alg}_{\Sigma, \cS_i})}
\simeq \frac{\RR}{\sum_{i=1}^n (\theta^{-1}\LL^{\alg}_{\Sigma, \cS_i})}.
\]
Hence Theorem \ref{thm:103} tells us an exact sequence
\[
0 \to \left(\frac{\left(\bigwedge_{\RR}^l \XX_{\UU}(K) \right)_{/\tor}}
{\sum_{i=1}^n \bigwedge_{\RR}^l D_{\TT_i}(K)}\right)_{\PN, \qu}
\to \frac{\RR_{\qu}}{\sum_{i=1}^n \theta^{-1} (\LL^{\alg}_{\Sigma, \cS_i})}
\to \theta \frac{\RR_{\qu}}{\Fitt_{\RR_{\qu}}(H^2(C_{\Sigma, \UU^c})^{\iota}(1)_{\qu})}
\to 0.
\]
By applying \eqref{eq:133} to the final module, we obtain the theorem.
\end{proof}

\begin{rem}\label{rem:K-gp}
As explained above, the main results of \cite{BCG+} and \cite{BCG+b} are formulated using the Chern classes of modules.
Let us now briefly discuss generalization of the notion of the Chern classes to equivariant settings.

For each positive integer $m$, let $\CC_{\RR}^m$ denote the category of finitely generated $\RR$-modules whose projective dimensions are finite and whose codimensions are at least $m$.
Note that, if $\RR$ is a finite product of regular local rings, then the finiteness of the projective dimension always holds.

Let us consider the Grothendieck group $K_0(\CC_{\RR}^m)$ of $\CC_{\RR}^m$ as an exact category.
Recall that $K_0(\CC_{\RR}^m)$ is defined by describing generators and relations: the generators are $[M]$ for objects $M$ and the relations are $[M] = [M'] + [M'']$ for exact sequences $0 \to M' \to M \to M'' \to 0$.
For $m = 1$, it is well-known that we have a natural isomorphism $K_0(\CC_{\RR}^1) \simeq Q(\RR)^{\times}/\RR^{\times}$, and this group is often used to formulate equivariant main conjectures in Iwasawa theory.
For general $m \geq 1$, if $p \nmid [K: \tilde{E}]$, one can also check that the map $[M] \mapsto c_m(M)$ gives an isomorphism $K_0(\CC_{\RR}^m) \simeq Z^m(\RR)$ (we omit the proof here).

These observations lead us to the idea of generalizing the Chern class $c_m(M) \in Z^m(\RR)$ in the non-equivariant settings to the class $[M] \in K_0(\CC_{\RR}^m)$ in the equivariant settings.
However, a crucial obstruction is that, in general, the modules in the main results of this paper (e.g., Theorem \ref{thm:102}) are not necessarily of finite projective dimension, so we cannot deduce a formula in the Grothendieck group.
Note also that, even in the non-equivariant settings, the exact sequences have more information than equations of the Chern classes.
Therefore, it is reasonable to formulate the main results in the form of exact sequences.

In \cite[\S 1.1]{BCG+}, the $m$-th Chern classes are also defined for complexes that are exact in codimension less than $m$.
This notion also seems to be amenable to equivariant situations, but it is unclear how to apply it to our arithmetic situation.
This is because we cannot expect that the arithmetic complexes concerned are exact in codimension less than $2$.
\end{rem}

\section{The second main theorem}\label{sec:11}

Our second main theorem focuses on the case where $l = 1$ in \S \ref{subsec:choice}.
Note that we have $l = 1$ if and only if $n = 2$ and $\TT_1$ (and thus $\TT_2$) consists of a unique prime of degree one.

Let $\Gal(K/\ttilde{E})^{(p')}$ denote the maximal subgroup of $\Gal(K/\ttilde{E})$ whose order is prime to $p$.
Let $\overline{\Q_p}$ be a fixed algebraic closure of $\Q_p$.
For each character $\psi: \Gal(K/\ttilde{E})^{(p')} \to \overline{\Q_p}^{\times}$, let $\RR^{\psi}$ denote the $\psi$-component of $\RR$.
Then we have a decomposition
\[
\RR = \prod_{\psi} \RR^{\psi},
\]
where $\psi$ runs over equivalence classes of characters of $\Gal(K/\ttilde{E})^{(p')}$; two characters $\psi, \psi'$ are said to be equivalent if $\psi' = \psi^{\sigma}$ for some $\sigma \in \Gal(\ol{\Q_p}/\Q_p)$.
Note that $\RR^{\psi}$ is a local ring but not a domain unless $\Gal(K/\ttilde{E})^{(p')} = \Gal(K/\ttilde{E})$, i.e., $p \nmid [K: \tilde{E}]$.

The following is the second main theorem, of which Theorem \ref{thm:101} is a special case.

\begin{thm}\label{thm:105}
Keep the notation in \S \ref{subsec:choice} and suppose $l = 1$.
Let $\psi$ be a character of $\Gal(K/\ttilde{E})^{(p')}$ such that 
\begin{equation}\label{eq:134}
Z_v(K)^{\omega\psi^{-1}} = 0
\end{equation}
 for any $v \in \VV \setminus S_p(E)$, and moreover that $\XX_{(\Sigma_f \setminus \VV) \cup \UU^c}(K)^{\omega \psi^{-1}}$ is pseudo-null.

\begin{itemize}
\item[(1)]
Suppose $d = 1$ and $\VV = S_p(E)$ (note that condition \eqref{eq:134} is trivial in this case).
We write $S_p(E) = \{\pe, \ol{\pe}\}$.
Then there exists an $\RR^{\psi}$-module $\cA$ which fits in an exact sequence
\[
0 \to \XX(K)^{\psi} \to \cA \to Z_{\Sigma_f \setminus S_p(E)}^0(K)^{\psi} \to 0
\]
such that we have an exact sequence
\[
\Z_p(1)^{\psi} 
\to \cA
\to \frac{\RR^{\psi}}{(\LL^{\alg, \psi}_{\Sigma, \{\pe\}}, \LL^{\alg, \psi}_{\Sigma, \{\ol{\pe}\}})} 
\to E^2(\XX_{\Sigma_f \setminus S_p(E)}(K)^{\omega \psi^{-1}})^{\iota}(1) \to 0.
\]
Moreover, the image of the first map from $\Z_p(1)^{\psi}$ is finite.
\item[(2)]
Suppose either $d \geq 2$ or $\VV \supsetneqq S_p(E)$.
Then there exist $\RR^{\psi}$-modules $\cA$ and $\BB$ which fit in exact sequences
\begin{equation}\label{eq:62}
0 \to \XX_{\ol{\UU^c}}(K)^{\psi} \to \cA \to Z_{(\Sigma_f \setminus \VV) \cup \UU^c}^0(K)^{\psi} \to 0
\end{equation}
and
\begin{equation}\label{eq:63}
0 \to \XX_{(\Sigma_f \setminus \VV) \cup \UU^c}(K)^{\omega \psi^{-1}} \to \BB \to Z_{\ol{\UU^c}}^0(K)^{\omega \psi^{-1}} \to 0
\end{equation}
such that we have an exact sequence
\[
0 \to \cA \to 
\frac{\RR^{\psi}}{(\LL^{\alg, \psi}_{\Sigma, \VV \setminus \cS_1^c}, \LL^{\alg, \psi}_{\Sigma, \VV \setminus \cS_2^c})}
\to E^2(\BB)^{\iota}(1) \to 0.
\]
\end{itemize}
\end{thm}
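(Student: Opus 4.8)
The plan is to run the proof of Theorem~\ref{thm:103}, in particular sequence~\eqref{eq:5.9}, but now one $\psi$-component at a time, exploiting $l=1$ to promote the maps $\Psi$ to reflexive hulls. Fix $\psi$; then $\RR^\psi$ is local and is finite free over a regular local subring $\Lambda^\psi$ (a $\psi$-component of $\Lambda$) with $\Hom_{\Lambda^\psi}(\RR^\psi,\Lambda^\psi)\simeq\RR^\psi$, so the results of \S\ref{sec:02} apply over $\RR^\psi\supset\Lambda^\psi$ and the diagram of Proposition~\ref{prop:46} may be taken $\psi$-componentwise. Since $l=1$, each $\TT_i$ ($i=1,2$) is a single prime $\pe_i$ of degree one and all the relevant $\bigwedge^1_{\RR}$ are identity maps.

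The crux is that $\Psi_{C^{\loc}_{\TT_i}}$ is the reflexive hull of $D_{\pe_i}(K)$ with free rank-one target, that in the $\psi$-component $(\Psi_{C_{\Sigma,\VV\setminus\UU^c}[1]})^\psi$ is the reflexive hull of $H^2(C_{\Sigma,\VV\setminus\UU^c})^\psi$ with free rank-one target, and that $f_1,f_2$ then become injective. For the local terms: by Proposition~\ref{prop:20}(2), $D_{\pe_i}(K)_{\tor}\simeq E^1(H^2_{\Iw}(K_{\pe_i},\Z_p))^\iota$, and since $\deg(\pe_i)=1$ the module $H^2_{\Iw}(K_{\pe_i},\Z_p)\simeq Z_{\pe_i}(K)(-1)$ is pseudo-null by Lemma~\ref{lem:a13}, so that $E^1$ vanishes and $D_{\pe_i}(K)$ is torsion-free; then Lemma~\ref{lem:40} and Proposition~\ref{prop:43} (with $l=1$, so $\bigcap_{\RR}^1=(-)^{**}$) give that $\Psi_{C^{\loc}_{\TT_i}}$ is the reflexive hull and $\Det^{-1}_{\RR}(C^{\loc}_{\TT_i})\simeq D_{\pe_i}(K)^{**}$ is free of rank one. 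For the global term: by Proposition~\ref{prop:20}(1) the torsion of $H^2(C_{\Sigma,\VV\setminus\UU^c})$ is $E^1(H^2(C_{\Sigma,(\Sigma_f\setminus\VV)\cup\UU^c}))^\iota(1)$, whose $\psi$-component equals $E^1\bigl(H^2(C_{\Sigma,(\Sigma_f\setminus\VV)\cup\UU^c})^{\omega\psi^{-1}}\bigr)^\iota(1)$; and $H^2(C_{\Sigma,(\Sigma_f\setminus\VV)\cup\UU^c})^{\omega\psi^{-1}}$ is pseudo-null, being (by~\eqref{eq:77}) an extension of the pseudo-null module $\XX_{(\Sigma_f\setminus\VV)\cup\UU^c}(K)^{\omega\psi^{-1}}$ by an augmentation submodule of $\bigoplus_v Z_v(K)^{\omega\psi^{-1}}$ that is pseudo-null by~\eqref{eq:134} and Lemma~\ref{lem:a13}; hence that $E^1$ vanishes, $H^2(C_{\Sigma,\VV\setminus\UU^c})^\psi$ is torsion-free, and Lemma~\ref{lem:40}, Proposition~\ref{prop:43} apply. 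Injectivity of $f_1,f_2$ then follows since $f_2$ is built from the inclusions $\iota_C$ of Definition~\ref{defn:38} and the source of $f_1$ is torsion-free.

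Granting this, taking the $\psi$-component of~\eqref{eq:5.9} and using $\Ker(f_2)^\psi=0$ yields an exact sequence
\[
0\to\Ker(f_3)^\psi\to\Coker(f_1)^\psi\to\RR^\psi/(\LL^{\alg,\psi}_{\Sigma,\VV\setminus\cS_1^c},\LL^{\alg,\psi}_{\Sigma,\VV\setminus\cS_2^c})\to\Coker(f_3)^\psi\to 0,
\]
the middle term being read off from Proposition~\ref{prop:47}. It remains to match the outer terms. For $\Coker(f_1)^\psi$: via the triangles of Definition~\ref{defn:32} removing $\pe_1,\pe_2$ from the split locus and the vanishing of $H^3$ in the relevant range, $\Coker(f_1)$ is the image of $H^2(C_{\Sigma,\VV\setminus\UU^c})$ in $H^2(C_{\Sigma,(\VV\setminus S_p(E))\cup\ol{\UU^c}})$, which by~\eqref{eq:77} and Proposition~\ref{prop:20} — again using~\eqref{eq:134} and Lemma~\ref{lem:a13} to replace $\XX_{(\VV\setminus S_p(E))\cup\ol{\UU^c}}(K)^\psi$ by $\XX_{\ol{\UU^c}}(K)^\psi$ and to adjust the $Z^0$-factor — is the module $\cA$ of~\eqref{eq:62}. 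Dually, $\Coker(f_3)^\psi\simeq E^2(\BB)^\iota(1)$ with $\BB$ fitting in~\eqref{eq:63}, using the cokernel formulas of Propositions~\ref{prop:31}(2),~\ref{prop:30}(2),~\ref{prop:45}, the dualities~\eqref{eq:93},~\eqref{eq:94}, and Proposition~\ref{prop:20}. Finally, one shows $\Ker(f_3)^\psi=0$ in case~(2) (giving $0\to\cA\to\cdots$), while in case~(1), where $\UU^c=\emptyset$ and $\VV=S_p(E)$, the term $\Ker(f_3)^\psi$ — equivalently the kernel of $\cA\to\RR^\psi/(\LL^{\alg,\psi}_{\Sigma,\{\pe\}},\LL^{\alg,\psi}_{\Sigma,\{\ol\pe\}})$ — is a quotient of $\Z_p(1)^\psi$ coming from the top cohomology $H^3(C_{\Sigma,\Sigma_f})\simeq\Z_p$ (which acquires the Tate twist via~\eqref{eq:93}); its image is finite because $H^3(C_{\Sigma,\Sigma_f})$ carries a trivial Galois action, so any homomorphism out of $\Z_p(1)^\psi$ into a module built from it factors through a finite quotient, exactly as in \cite[Theorem~5.2.1]{BCG+}.

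The hard part will be this last matching: carefully tracking the index sets ($\ol{\UU^c}$, $(\Sigma_f\setminus\VV)\cup\UU^c$, and the sets obtained by inserting or deleting $\pe_1,\pe_2$) and the twist discrepancy between the $\psi$- and $\omega\psi^{-1}$-sides introduced by the duality theorems, checking that condition~\eqref{eq:134} imposed in the $\omega\psi^{-1}$-component does control — through~\eqref{eq:94} — the $\psi$-components of all the modifications of the Iwasawa modules that occur, and establishing the dichotomy for $\Ker(f_3)^\psi$ (zero in case~(2), a quotient of $\Z_p(1)^\psi$ with finite image in case~(1)).
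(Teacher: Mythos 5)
Your overall architecture matches the paper's proof: take $\psi$-components so that $\RR^{\psi}$ is local, invoke Proposition~\ref{prop:22} and Lemma~\ref{lem:40}/Proposition~\ref{prop:43} (that is, Proposition~\ref{prop:74}) to identify both $\Psi$-maps with reflexive hulls with free rank-one targets, arrange the commutative diagram of Proposition~\ref{prop:46} with the explicit reflexive-hull rows of Proposition~\ref{prop:20}, and apply the snake lemma, reading off $\Coker(f_2)$ from Proposition~\ref{prop:47} and $\Coker(f_1)$ from \eqref{eq:77} together with Lemma~\ref{lem:135}. That much is sound and is exactly the paper's route.

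However, two steps are not justified. First, the bottom row from Proposition~\ref{prop:20}(1) is a five-term exact sequence ending with a map $E^2(H^2(C_{\Sigma,(\Sigma_f\setminus\VV)\cup\UU^c}))^{\iota}(1)\to W_{(\Sigma_f\setminus\VV)\cup\UU^c}$, which need not be zero a priori. To conclude $\Coker(f_3)^{\psi}\simeq E^2(\BB)^{\iota}(1)$ (rather than merely a submodule of it) one has to show the induced composite to $W^{\psi}$ vanishes. This is nontrivial exactly when $(\Sigma_f\setminus\VV)\cup\UU^c=\emptyset$ (which forces $d=1$, $\Sigma_f=\VV$, $\UU^c=\emptyset$), a configuration that can occur in both cases (1) and (2); the paper disposes of it via Proposition~\ref{prop:a18}(2), but your proposal does not address it at all. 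Second, your stated reason that the image of $\Z_p(1)^{\psi}\to\cA$ is finite --- ``because $H^3(C_{\Sigma,\Sigma_f})$ carries a trivial Galois action, so any homomorphism out of $\Z_p(1)^{\psi}$ into a module built from it factors through a finite quotient'' --- is not a valid argument: $\cA$ is $\Coker(f_1)$, a quotient of $H^2(C_{\Sigma,S_p(E)})^{\psi}$, not of $H^3$, and triviality of the Galois action on $H^3$ does not control maps into $\cA$. The correct argument (the paper's, following \cite[Theorem~5.2.1]{BCG+}) is that $\Z_p(1)$ does not occur as a submodule of $\cA$: this follows from the short exact sequence \eqref{eq:62}, the observation that $Z_{\Sigma_f\setminus S_p(E)}^0(K)$ has no $\Z_p(1)$-submodule, and Proposition~\ref{prop:a18}(1) (that $\Z_p(1)$ does not embed into $\XX(K)$); since every nonzero $\Z_p$-submodule of $\Z_p(1)$ is isomorphic to $\Z_p(1)$, the image must be torsion, hence finite. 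Both gaps are filled by the single Proposition~\ref{prop:a18}, which your proposal never invokes.
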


We will give explicit constructions of $\cA$ and $\BB$ in the proof.
Before that, we give a remark and a corollary.

\begin{rem}\label{rem:BCG+b_main}
We can deduce the main result of \cite{BCG+b} from Theorem \ref{thm:105} as follows.
The result \cite[Theorem 5.12]{BCG+b} deals with the case where $p \nmid [K: \tilde{E}]$ and $\Sigma = S_p(E) \cup S_{\infty}(E)$, and concerns the second Chern classes.
The condition \eqref{eq:134} holds trivially by $\Sigma_f = S_p(E)$.
Then we only have to recall that $c_2$ is additive with respect to exact sequences, and that $c_2(E^2(M)) = c_2(M)$ for pseudo-null module $M$ (see \cite[Remark 5.11]{BCG+b}).
\end{rem}

\begin{cor}\label{cor:73}
Suppose $l = 1$.
Let $\psi$ be a character of $\Gal(K/\ttilde{E})^{(p')}$.
Suppose \eqref{eq:134} for any $v \in \VV \setminus S_p(E)$ and that $Z_v(K)^{\psi} = 0$ for $v \in \Sigma_f \setminus \VV$.
Then the following are equivalent.
\begin{itemize}
\item[(a)]
The module
\[
\frac{\RR^{\psi}}{(\LL^{\alg, \psi}_{\Sigma, \VV \setminus \cS_1^c}, \LL^{\alg, \psi}_{\Sigma, \VV \setminus \cS_2^c})}
\]
is pseudo-null.
\item[(b)]
Both $\XX_{(\Sigma_f \setminus \VV) \cup \UU^c}(K)^{\omega \psi^{-1}}$ and $\XX_{\ol{\UU^c}}(K)^{\psi}$ are pseudo-null.
\end{itemize}
Moreover, if these conditions hold, we have 
\[
\XX_{\ol{\UU^c}}(K)^{\psi}_{\fin} = 0
\]
unless $d = 1$, $\VV = S_p(E)$, and $\psi = \omega$, in which case $\XX(K)^{\omega}_{\fin}$ is cyclic.
\end{cor}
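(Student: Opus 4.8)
The plan is to deduce the corollary from Theorem~\ref{thm:105}, using the unconditional exact sequence~\eqref{eq:5.9} of Proposition~\ref{prop:46} to supply the part of (b) that Theorem~\ref{thm:105} itself presupposes; throughout one passes to the $\psi$-component and uses $l=1$, so that every exterior power reduces to the identity. For the implication (b)~$\Rightarrow$~(a): assuming (b), the module $\XX_{(\Sigma_f\setminus\VV)\cup\UU^c}(K)^{\omega\psi^{-1}}$ is pseudo-null, which is precisely the standing hypothesis of Theorem~\ref{thm:105}, so its exact sequences become available. By~\eqref{eq:62}, \eqref{eq:63}, Lemma~\ref{lem:a13}, and the two vanishing hypotheses on the $Z_v(K)$-components, the modules $\cA$ and $\BB$ differ from $\XX_{\ol{\UU^c}}(K)^{\psi}$ and $\XX_{(\Sigma_f\setminus\VV)\cup\UU^c}(K)^{\omega\psi^{-1}}$ only by pseudo-null submodules, hence are themselves pseudo-null by (b). Since $E^2(-)$ of any module is pseudo-null, the last exact sequence of Theorem~\ref{thm:105}(2) exhibits the module in (a) as an extension of pseudo-null modules; in the setting of Theorem~\ref{thm:105}(1) one argues the same way, noting that $\cA\cong\XX(K)^{\psi}$ because the relevant $Z^0$-term vanishes.

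For the implication (a)~$\Rightarrow$~(b), which I expect to be the main obstacle, the difficulty is that Theorem~\ref{thm:105} already presupposes the pseudo-nullity of $\XX_{(\Sigma_f\setminus\VV)\cup\UU^c}(K)^{\omega\psi^{-1}}$, so I would first extract this directly from~\eqref{eq:5.9} in its $\psi$-component. Two points make this work: (i) the term $\Coker(\Ker f_2\to\Ker f_3)$ is pseudo-null, because each $\Coker(\Psi_{C^{\loc}_{\TT_i}})$ is, using Proposition~\ref{prop:30}(2), Lemma~\ref{lem:a13}, and the fact that the Fitting ideal of a pseudo-null module has height at least two; and (ii) by Proposition~\ref{prop:45}(1), applicable at every prime of $\Lambda$ of height at most one (such a prime cannot contain the augmentation ideal), $\Coker(f_3)$ is pseudo-null if and only if $\Fitt_{\RR}(H^2(C_{\Sigma,(\Sigma_f\setminus\VV)\cup\UU^c})^{\iota}(1))$ has height at least two. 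Hence (a), which makes $\Coker(f_3)^{\psi}$ pseudo-null, forces $\Fitt_{\RR^{\psi}}(H^2(C_{\Sigma,(\Sigma_f\setminus\VV)\cup\UU^c})^{\omega\psi^{-1}})$ to have height at least two; since $\XX_{(\Sigma_f\setminus\VV)\cup\UU^c}(K)$ is torsion (being a quotient of $\XX_{S}(K)$ for a set $S$ with $S\cap S_p(E)$ a CM-type, by Lemma~\ref{lem:37} and~\eqref{eq:77}), so is this $H^2$, and a torsion module whose Fitting ideal has height at least two is pseudo-null; then~\eqref{eq:77} together with~\eqref{eq:134} gives that $\XX_{(\Sigma_f\setminus\VV)\cup\UU^c}(K)^{\omega\psi^{-1}}$ is pseudo-null. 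With this established, Theorem~\ref{thm:105} applies, and its remaining exact sequences, together with (a) and the pseudo-nullity of $E^2(-)$, force $\cA$ and hence $\XX_{\ol{\UU^c}}(K)^{\psi}$ to be pseudo-null, which is the other half of (b). The crucial feature of this argument is the role of the two hypotheses on the $Z_v(K)$-components, which are exactly what removes the codimension-one contributions of $Z_v(K)$ at ramified non-$p$-adic places.

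For the final assertion, assume (a) (equivalently (b)). First, $\LL^{\alg,\psi}_{\Sigma,\VV\setminus\cS_1^c}$ and $\LL^{\alg,\psi}_{\Sigma,\VV\setminus\cS_2^c}$ form a regular sequence on the Gorenstein local ring $\RR^{\psi}$: the first is a non-zero-divisor because $H^2(C_{\Sigma,\VV\setminus\cS_1^c})$ is torsion (Lemma~\ref{lem:37}), and the second is a non-zero-divisor modulo $(\LL^{\alg,\psi}_{\Sigma,\VV\setminus\cS_1^c})$, for otherwise it would lie in a height-one prime containing $\LL^{\alg,\psi}_{\Sigma,\VV\setminus\cS_1^c}$, contradicting (a). Consequently $\RR^{\psi}/(\LL^{\alg,\psi}_{\Sigma,\VV\setminus\cS_1^c},\LL^{\alg,\psi}_{\Sigma,\VV\setminus\cS_2^c})$ is Cohen--Macaulay of positive dimension (as $\dim\RR^{\psi}\ge d+2\ge 3$), hence has no nonzero finite submodule. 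In case~(2) of Theorem~\ref{thm:105}, $\cA$ embeds into this ring and $\XX_{\ol{\UU^c}}(K)^{\psi}$ embeds into $\cA$ (the $Z^0$-quotient being zero under the hypotheses), so $\XX_{\ol{\UU^c}}(K)^{\psi}_{\fin}=0$. In case~(1) ($d=1$, $\VV=S_p(E)$, so $\ol{\UU^c}=\emptyset$ and $\cA\cong\XX(K)^{\psi}$), the same vanishing forces $\XX(K)^{\psi}_{\fin}$ to coincide with the image of $\Z_p(1)^{\psi}$ in $\cA$, which is finite and is a quotient of $\Z_p(1)^{\psi}$; since $\Z_p(1)^{\psi}=0$ unless $\psi=\omega$, we conclude $\XX(K)^{\psi}_{\fin}=0$ for $\psi\neq\omega$, while $\XX(K)^{\omega}_{\fin}$ is a quotient of the cyclic $\RR^{\omega}$-module $\Z_p(1)$, hence cyclic.
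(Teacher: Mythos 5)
Your proof is correct in substance, but for the implication (a)~$\Rightarrow$~(b) you take a genuinely different route from the paper. The paper's argument is more direct: by Definition~\ref{defn:38}, the element $\LL^{\alg}_{\Sigma,\VV\setminus\cS_i^c}$ annihilates $H^2(C_{\Sigma,\VV\setminus\cS_i^c})$, hence (via~\eqref{eq:77}) the quotient $\XX_{\VV\setminus\ol{\UU}}(K)$, and (via the triangle~\eqref{eq:137}, Proposition~\ref{prop:20}(1), and torsion-freeness of $D_{\TT_i}(K)$ from Proposition~\ref{prop:22}(2)) the submodule $E^1(H^2(C_{\Sigma,(\Sigma_f\setminus\VV)\cup\UU^c}))^{\iota}(1) \simeq H^2(C_{\Sigma,\VV\setminus\UU^c})_{\tor}$; one then reads off pseudo-nullity of both modules in~(b) directly from~(a) as statements about common annihilators. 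Your approach instead re-runs the unconditional exact sequence~\eqref{eq:5.9} and argues by Fitting-ideal heights via Propositions~\ref{prop:30}(2) and~\ref{prop:45}(1); this works, and is perhaps more ``mechanical,'' but it leans on more of the machinery of~\S\ref{sec:33} where the paper's argument is a cleaner observation. Note one imprecision: your step~(ii) attributes to Proposition~\ref{prop:45}(1) a description of $\Coker(f_3)$, but that proposition describes $\Coker(\Psi_{C_{\Sigma,\VV\setminus\UU^c}[1]})$, which is the \emph{target} of $f_3$; the ``if and only if'' you assert only holds after combining with your step~(i) (pseudo-nullity of the domain $\bigoplus_i\Coker(\Psi_{C^{\loc}_{\TT_i}})$ of $f_3$), which you do have, so the gap is fillable but the logic should be stated more carefully. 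Similarly, in~(b)~$\Rightarrow$~(a) you say $\cA$ and $\BB$ differ from the arithmetic modules by pseudo-null \emph{submodules}, whereas~\eqref{eq:62} and~\eqref{eq:63} exhibit them as extensions with pseudo-null \emph{quotients}; the conclusion is the same but the direction is flipped. For the final assertion, you give a self-contained proof that $\RR^{\psi}/(\LL^{\alg,\psi}_{\Sigma,\VV\setminus\cS_1^c},\LL^{\alg,\psi}_{\Sigma,\VV\setminus\cS_2^c})$ has no nonzero finite submodule, via a regular-sequence/Cohen--Macaulay argument over the Gorenstein local ring $\RR^{\psi}$; the paper instead cites {[Lemma~A.3]} of~\cite{BCG+}. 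Both are fine, and your argument is a nice way to make the appendix reference unnecessary.
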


\begin{proof}
This is a generalization of \cite[Proposition 5.10]{BCG+b}, whose proof we will follow.
Firstly, Theorem \ref{thm:105} directly shows (b) $\Rightarrow$ (a).
The final assertion on $\XX_{\ol{\UU^c}}(K)^{\psi}_{\fin}$ under (a) and (b) follows from Theorem \ref{thm:105} and an algebraic proposition (e.g., \cite[Lemma A.3]{BCG+}) that $\left(\frac{\RR^{\psi}}{(\LL^{\alg, \psi}_{\Sigma, \VV \setminus \cS_1^c}, \LL^{\alg, \psi}_{\Sigma, \VV \setminus \cS_2^c})}\right)_{\fin} = 0$.

It remains to show (a) $\Rightarrow$ (b), so let us suppose (a).
We first observe that, by Definition \ref{defn:38}, the element $\LL_{\Sigma, \VV \setminus \cS_i^c}^{\alg}$ annihilates $H^2(C_{\Sigma, \VV \setminus \cS_i^c})$ for each $i = 1, 2$.

We shall show the pseudo-nullity of $\XX_{\ol{\UU^c}}(K)^{\psi}$.
By \eqref{eq:77} and the natural surjective homomorphism
\[
\XX_{\VV \setminus \cS_i^c}(K) \twoheadrightarrow \XX_{\VV \setminus \ol{\UU}}(K),
\]
the element $\LL_{\Sigma, \VV \setminus \cS_i^c}^{\alg}$ also annihilates $\XX_{\VV \setminus \ol{\UU}}(K)$.
Therefore, the pseudo-nullity of $\XX_{\ol{\UU^c}}(K)^{\psi} = \XX_{\VV \setminus \ol{\UU}}(K)^{\psi}$ (similar as Lemma \ref{lem:135}) follows from (a).

Next we show the pseudo-nullity of $\XX_{(\Sigma_f \setminus \VV) \cup \UU^c}(K)^{\omega \psi^{-1}}$.
As in the proof of Proposition \ref{prop:46}, triangle \eqref{eq:137} induces an exact sequence
\[
0 \to D_{\TT_i}(K) \to H^2(C_{\Sigma, \VV \setminus \UU^c}) \to H^2(C_{\Sigma, \VV \setminus \cS_i^c}),
\]
where the injectivity follows from Lemma \ref{lem:37}.
We know that $E^1(H^2(C_{\Sigma, (\Sigma_f \setminus \VV) \cup \UU^c}))^{\iota}(1)$ coincides with $H^2(C_{\Sigma, \VV \setminus \UU^c})_{\tor}$ by Proposition \ref{prop:20}(1).
Moreover, $D_{\TT_i}(K)$ is torsion-free by Proposition \ref{prop:22}(2).
From these observations, we see that $E^1(H^2(C_{\Sigma, (\Sigma_f \setminus \VV) \cup \UU^c}))^{\iota}(1)$ maps injectively into $H^2(C_{\Sigma, \VV \setminus \cS_i^c})$.
In particular, the element $\LL_{\Sigma, \VV \setminus \cS_i^c}^{\alg, \psi}$ annihilates $E^1(H^2(C_{\Sigma, (\Sigma_f \setminus \VV) \cup \UU^c})^{\omega \psi^{-1}})^{\iota}(1)$.
By assumption (a), it follows that $E^1(H^2(C_{\Sigma, (\Sigma_f \setminus \VV) \cup \UU^c})^{\omega \psi^{-1}})^{\iota}(1)$ is pseudo-null.
Therefore, $H^2(C_{\Sigma, (\Sigma_f \setminus \VV) \cup \UU^c})^{\omega \psi^{-1}}$ is pseudo-null.
\end{proof}

In the rest of this section, we prove Theorem \ref{thm:105}.
We first show a couple of propositions that are valid without assuming $l = 1$.

The following proposition is a motivation for the conditions in Theorem \ref{thm:105}.
Note that the pseudo-nullity of $\XX_{(\Sigma_f \setminus \VV) \cup \UU^c}(K)^{\omega \psi^{-1}}$ is a stronger condition than Greenberg's conjecture.
In fact it is not necessarily true; in \cite{Kata_04}, the author obtained examples for which the pseudo-nullity does not hold for tamely ramified Iwasawa modules (when $E$ is imaginary quadratic, $K = \tilde{E}(\mu_p)$, and $\psi = \omega$).

\begin{prop}\label{prop:22}
The following are true.
\begin{itemize}
\item[(1)]
Let $\psi$ be a character of $\Gal(K/\ttilde{E})^{(p')}$.
The module $H^2(C_{\Sigma, \VV \setminus \UU^c})^{\psi}$ is torsion-free if and only if \eqref{eq:134} holds for any $v \in \VV \setminus S_p(E)$ and moreover $\XX_{(\Sigma_f \setminus \VV) \cup \UU^c}(K)^{\omega \psi^{-1}}$ is pseudo-null.
\item[(2)]
$D_{\pe}(K)$ is torsion-free for each $\pe \in S_p(E)$.
\end{itemize}
\end{prop}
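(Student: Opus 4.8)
The plan is to obtain part (2) from Proposition~\ref{prop:20}(2) and part (1) from Proposition~\ref{prop:20}(1), the substance of the latter being a rank computation for local decomposition groups. I shall use repeatedly that, for the decomposition $\RR=\prod_\psi\RR^\psi$, passing to a $\psi$-component is exact and commutes with $H^i$, $E^i$ and $(\cdot)_{\tor}$; that $E^i_{\RR}=E^i_\Lambda$ for the auxiliary ring $\Lambda$ of \S\ref{subsec:131}, so that these Ext groups may be computed over a product of regular local rings and ``pseudo-null over $\RR$'' coincides with ``pseudo-null over $\Lambda$''; and that $(\cdot)^\iota(1)$, being restriction of scalars along a ring automorphism of $\RR$ (the cyclotomic character factors through $\Gal(K/E)$ since $K\supset E(\mu_{p^{\infty}})$), is exact and carries the $\omega\psi^{-1}$-component of a module onto the $\psi$-component of its image.

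For part (2): by Proposition~\ref{prop:20}(2) the kernel of $D_\pe(K)\to D_\pe(K)^{**}$ is $E^1(H^2_{\Iw}(K_\pe,\Z_p))^\iota$, so it suffices to see that $E^1(H^2_{\Iw}(K_\pe,\Z_p))=0$. Up to the twist just mentioned, $H^2_{\Iw}(K_\pe,\Z_p)$ is isomorphic to $H^2_{\Iw}(K_\pe,\Z_p(1))=H^2(C_\pe^{\loc})=Z_\pe(K)$ by \eqref{eq:42}, which is pseudo-null by Lemma~\ref{lem:a13}; and a pseudo-null $\Lambda$-module has $E^1=0$, its grade being $\ge 2$. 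Hence $D_\pe(K)_{\tor}=0$.

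For part (1), put $S=(\Sigma_f\setminus\VV)\cup\UU^c$, so that $\Sigma_f\setminus S=\UU\sqcup(\VV\setminus S_p(E))$ (recall $\UU\subset S_p(E)\subset\VV$). By Proposition~\ref{prop:20}(1), $H^2(C_{\Sigma,\VV\setminus\UU^c})_{\tor}=E^1(H^2(C_{\Sigma,S}))^\iota(1)$; taking $\psi$-components, $H^2(C_{\Sigma,\VV\setminus\UU^c})^{\psi}$ is torsion-free $\iff E^1\!\bigl(H^2(C_{\Sigma,S})^{\omega\psi^{-1}}\bigr)=0$. Next I would record that $H^2(C_{\Sigma,S})$ is $\RR$-torsion: the complex $C_{\Sigma,\VV\setminus\UU^c}$ has cohomology concentrated in degree $2$ (its $H^1$ vanishes and its $H^3$, where present, is the pseudo-null module $\Z_p$; cf.\ the proof of Proposition~\ref{prop:46}), hence is concentrated in degree $2$ over $Q(\RR)$, so by the duality \eqref{eq:93} the complex $C_{\Sigma,S}$ has generic cohomology only in degree $1$, whence $H^2(C_{\Sigma,S})$ is torsion. (Alternatively one runs the rank count of the proof of Lemma~\ref{lem:rank} for $C_{\Sigma,S}$, using $\chi_{\RR}(C_{\Sigma,S})=l$ and $\rank_\RR H^1(C_{\Sigma,S})=l$, the latter from $H^1(C_{\Sigma,\cS})=0$ for any CM-type $\cS\supset\UU^c$.) Since, for torsion $N$, one has $E^1(N)_\qu\cong N_\qu$ at every height-one prime $\qu$ of $\Lambda$, we get $E^1(N)=0$ if and only if $N$ is pseudo-null; so the problem reduces to proving that $H^2(C_{\Sigma,S})^{\omega\psi^{-1}}$ is pseudo-null if and only if \eqref{eq:134} holds and $\XX_{(\Sigma_f\setminus\VV)\cup\UU^c}(K)^{\omega\psi^{-1}}$ is pseudo-null.

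This last equivalence I would extract from \eqref{eq:77}, which gives $0\to\XX_S(K)^{\omega\psi^{-1}}\to H^2(C_{\Sigma,S})^{\omega\psi^{-1}}\to Z_{\Sigma_f\setminus S}^0(K)^{\omega\psi^{-1}}\to 0$, so the middle term is pseudo-null exactly when the two ends are. For $v\in\UU\subset S_p(E)$ the summand $Z_v(K)^{\omega\psi^{-1}}$ is pseudo-null by Lemma~\ref{lem:a13}, and $\Z_p$ is pseudo-null over $\RR$, so $Z_{\Sigma_f\setminus S}^0(K)^{\omega\psi^{-1}}$ is pseudo-null $\iff Z_v(K)^{\omega\psi^{-1}}$ is pseudo-null for every $v\in\VV\setminus S_p(E)$. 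The crucial point — and the reason \eqref{eq:134} appears in the sharp ``$=0$'' form rather than as a pseudo-nullity hypothesis — is that for a finite prime $v\nmid p$ the decomposition group $G_v(K/E)$ has $\Z_p$-rank exactly $1$: on one hand it is a quotient of the profinite completion of $E_v^\times$, whose pro-$p$ part has $\Z_p$-rank $1$ as $v\nmid p$; on the other hand it surjects onto the (procyclic, $\Z_p$-rank one) decomposition group of $v$ in the cyclotomic $\Z_p$-extension of $E$, since no prime of $E$ splits completely there. Consequently $Z_v(K)=\Z_p[[\Gal(K/E)/G_v(K/E)]]$ has codimension $\le 1$ over $\RR$, so $Z_v(K)^{\omega\psi^{-1}}$ is pseudo-null if and only if it vanishes. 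Assembling these facts yields the stated description, and with it the proposition. I expect this last step to be the main obstacle: Proposition~\ref{prop:20} only controls the pseudo-nullity of the relevant dual cohomology, and it is exactly the rank-one computation for $G_v(K/E)$ that upgrades ``pseudo-null'' to the vanishing \eqref{eq:134}.
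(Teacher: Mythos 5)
The proposal is correct and takes essentially the same route as the paper: both rely on Proposition~\ref{prop:20} to identify the torsion submodule with an $E^1$, translate vanishing of that $E^1$ into pseudo-nullity of the dual cohomology group, and then unpack pseudo-nullity via \eqref{eq:77} (resp.\ \eqref{eq:42} and Lemma~\ref{lem:a13}). Your writeup merely supplies explicit verification of two facts the paper invokes without proof, namely the torsionness of $H^2(C_{\Sigma,S})$ needed for the equivalence $E^1(N)=0 \Leftrightarrow N$ pseudo-null, and the rank-one computation for $G_v(K/E)$ ($v\nmid p$) underlying the assertion $Z_v(K)_{\PN}=0$.
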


\begin{proof}
This proposition can be proved in the same way as in \cite[Remark 4.2.5]{BCG+}, as follows.
By Proposition \ref{prop:20}, the module $H^2(C_{\Sigma, \VV \setminus \UU^c})^{\psi}$ (resp.~$D_{\pe}(K)$) is torsion-free if and only if $E^1(H^2(C_{\Sigma, (\Sigma_f \setminus \VV) \cup \UU^c}))^{\omega \psi^{-1}} = 0$ (resp.~$E^1(H^2_{\Iw}(K_{\pe}, \Z_p)) = 0$), 
that is, $H^2(C_{\Sigma, (\Sigma_f \setminus \VV) \cup \UU^c})^{\omega \psi^{-1}}$ (resp.~$H^2_{\Iw}(K_{\pe}, \Z_p)$) is pseudo-null.
Therefore, claim (1) follows from \eqref{eq:77} and $Z_v(K)_{\PN} = 0$ for finite prime $v$ outside $p$, and claim (2) follows from \eqref{eq:42} and Lemma \ref{lem:a13}.
\end{proof}

\begin{lem}\label{lem:135}
Let $\psi$ be a character of $\Gal(K/\ttilde{E})^{(p')}$.
Suppose that \eqref{eq:134} holds for any $v \in \VV \setminus S_p(E)$.
Then we have $\XX_{\VV \setminus \UU^c}(K)^{\psi} \simeq \XX_{\UU}(K)^{\psi}$.
\end{lem}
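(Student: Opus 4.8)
The statement to prove is Lemma~\ref{lem:135}: under hypothesis \eqref{eq:134}, there is an isomorphism $\XX_{\VV \setminus \UU^c}(K)^{\psi} \simeq \XX_{\UU}(K)^{\psi}$. The two Iwasawa modules $\XX_{\VV \setminus \UU^c}(K)$ and $\XX_{\UU}(K) = \XX_{\Sigma_f \setminus \UU^c}(K)$ differ only in which primes outside $\UU^c$ are allowed to be non-split: the first allows splitting conditions at $\VV \setminus \UU^c$, the second at the larger set $\Sigma_f \setminus \UU^c = \UU \cup (\Sigma_f \setminus \VV)$. (Here I use $\VV \supset S_p(E)$ and $\UU \subset S_p(E)$, so $\VV \setminus \UU^c \subset \UU$ and $\Sigma_f \setminus \UU^c = \UU \sqcup (\Sigma_f \setminus \VV)$.) So the plan is to compare these via the standard exact sequence relating $\XX_S$ for nested $S$, namely the sequence built from the local terms $Z_v^0(K)$, and then observe that passing to the $\psi$-component kills the relevant local contributions.

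\textbf{Key steps.} First, I would invoke the complex-level comparison from \S\ref{subsec:60}: for $S \subsetneq \Sigma_f$ one has \eqref{eq:77}, the short exact sequence $0 \to \XX_S(K) \to H^2(C_{\Sigma,S}) \to Z_{\Sigma_f \setminus S}^0(K) \to 0$. Applying this with $S = \VV \setminus \UU^c$ and with $S = \Sigma_f \setminus \UU^c = \UU$, and combining with the defining triangles of Definition~\ref{defn:32} relating $C_{\Sigma,\VV\setminus\UU^c}$ and $C_{\Sigma,\UU}$ through the local complexes $C_v^{\loc}$ for $v \in \Sigma_f \setminus \VV$, produces a long exact sequence whose error terms are the cohomology groups $H^i(C_v^{\loc})$ for $v \in \Sigma_f \setminus \VV$. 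By \eqref{eq:42} these are $D_v(K)$ in degree $1$ and $Z_v(K)$ in degree $2$. Since every $v \in \Sigma_f \setminus \VV \subset \VV \setminus S_p(E)$... wait — actually $\Sigma_f \setminus \VV$ is disjoint from $\VV$, so these primes are non-$p$-adic; the relevant hypothesis to bring in is that such primes are non-$p$-adic, hence $Z_v(K)$ and $D_v(K)$ have controlled structure. Alternatively, and more cleanly, I would work directly with the Iwasawa-module exact sequence: for $S \subset S'$ there is an exact sequence of $\RR$-modules of the shape $\bigoplus_{v \in S' \setminus S} (\text{local term at }v) \to \XX_{S'}(K) \to \XX_S(K) \to 0$, where the local term is (up to the split/unramified reconciliation and the cyclotomic twist) built from $D_v(K)$ or a decomposition-group coinvariant; it suffices to show this local term becomes zero in the $\psi$-component.

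\textbf{Where hypothesis \eqref{eq:134} enters, and the main obstacle.} The crux is identifying the $\psi$-component of the local correction terms at $v \in \Sigma_f \setminus \VV$ with (a twist of) $Z_v(K)$, and then noting that $Z_v(K)^{\psi}$ for a non-$p$-adic $v$ vanishes: indeed for $v \nmid p$ the residue degree in $K/E$ is divisible by $p^\infty$ (as $K$ contains the cyclotomic $\Z_p$-extension), so $G_v(K/E)$ contains a $\Z_p$, forcing $Z_v(K) = \Z_p[[\Gal(K/E)/G_v(K/E)]]$ to be a module on which a nontrivial quotient acts, and in particular its $\psi$-component is a proper quotient — but this alone does not make it vanish. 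The actual mechanism must be \eqref{eq:134}, which asserts $Z_v(K)^{\omega\psi^{-1}} = 0$ for $v \in \VV \setminus S_p(E)$; by Proposition~\ref{prop:20}-type duality (the sequences \eqref{eq:93}, \eqref{eq:94}) this controls $E^i$ of the dual local terms, hence the $\psi$-component of $D_v(K)$ or of the relevant local cohomology. So I expect the main obstacle is bookkeeping: correctly matching up which local module ($D_v$, $Z_v$, or a $(1)$-twisted dual thereof) appears at each $v$, and checking that hypothesis \eqref{eq:134} — stated for $v \in \VV \setminus S_p(E)$, i.e. for the primes in $\VV$ that are not $p$-adic — together with the automatic vanishing $Z_v(K)_{\PN} = 0$ for non-$p$-adic $v$ (used in the proof of Proposition~\ref{prop:22}), is exactly what is needed. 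Once the local terms are shown to vanish $\psi$-componentwise, the surjection $\XX_{\UU}(K)^\psi \twoheadrightarrow \XX_{\VV\setminus\UU^c}(K)^\psi$ becomes an isomorphism, which is the claim. I would write this out by first reducing to the one-prime-at-a-time case $S' = S \cup \{v\}$ and iterating.
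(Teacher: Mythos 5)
Your overall strategy is the same as the paper's: use the standard exact sequence relating $\XX_{S'}(K)$ and $\XX_{S}(K)$ for nested $S \subset S'$, identify the local correction terms with $D_v(K)$, and use hypothesis \eqref{eq:134} together with the local duality \eqref{eq:94} to kill $D_v(K)^{\psi}$. The paper's proof is exactly the exact sequence
\[
\bigoplus_{v \in \VV \setminus S_p(E)} D_v(K) \to \XX_{\VV \setminus \UU^c}(K) \to \XX_{\UU}(K) \to 0,
\]
followed by the observation that \eqref{eq:134} and \eqref{eq:94} force $D_v(K)^{\psi} = 0$ for every $v \in \VV \setminus S_p(E)$.

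However, your set-theoretic bookkeeping is wrong in several places, and since you yourself flag bookkeeping as the crux, these errors need correcting rather than hand-waving. First, $\Sigma_f \setminus \UU^c = (\Sigma_f \setminus S_p(E)) \cup \UU$, which equals $\UU$ only when $\Sigma_f = S_p(E)$; so the identification ``$S = \Sigma_f \setminus \UU^c = \UU$'' does not hold in general, and the second term of the lemma really is $\XX_{\UU}(K)$, not $\XX_{\Sigma_f\setminus\UU^c}(K)$. Second, since $\UU^c \subset S_p(E) \subset \VV$, one has $\VV \setminus \UU^c = (\VV \setminus S_p(E)) \sqcup \UU$; hence the primes contributing local correction terms are exactly $(\VV \setminus \UU^c) \setminus \UU = \VV \setminus S_p(E)$ --- not $\Sigma_f \setminus \VV$, which you write repeatedly. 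Note that this is precisely the set of primes where hypothesis \eqref{eq:134} is imposed, so the bookkeeping lines up cleanly once this is fixed. Third, the surjection between Iwasawa modules goes from the larger index set to the smaller, so it is $\XX_{\VV \setminus \UU^c}(K)^{\psi} \twoheadrightarrow \XX_{\UU}(K)^{\psi}$, the reverse of what you wrote in the closing sentence. Once these three slips are corrected, your argument becomes the paper's.
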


\begin{proof}
We have an exact sequence
\[
\bigoplus_{v \in \VV \setminus S_p(E)} D_v(K) 
\to \XX_{\VV \setminus \UU^c}(K)
\to \XX_{\UU}(K) \to 0.
\]
For each $v \in \VV \setminus S_p(E)$, assumption \eqref{eq:134} and the duality \eqref{eq:94} imply that $D_v(K)^{\psi} = 0$.
Hence the lemma follows.
\end{proof}

\begin{prop}\label{prop:74}
The following are true.
\begin{itemize}
\item[(1)]
Let $\psi$ be a character of $\Gal(K/\ttilde{E})^{(p')}$.
Suppose that \eqref{eq:134} holds for any $v \in \VV \setminus S_p(E)$ and moreover $\XX_{(\Sigma_f \setminus \VV) \cup \UU^c}(K)^{\omega \psi^{-1}}$ is pseudo-null.
Then we have a natural isomorphism
\[
\bigcap_{\RR^{\psi}}^l H^2(C_{\Sigma, \VV \setminus \UU^c})^{\psi} \simeq \Det_{\RR^{\psi}}(C_{\Sigma, \VV \setminus \UU^c})^{\psi}.
\]
\item[(2)]
For each $1 \leq i \leq n$, we have a natural isomorphism
\[
\bigcap_{\RR}^l D_{\TT_i}(K) \simeq \Det_{\RR}^{-1}(C_{\TT_i}^{\loc}).
\]
\end{itemize}
\end{prop}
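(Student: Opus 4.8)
The plan is to obtain both isomorphisms as direct applications of Proposition \ref{prop:43}, whose hypotheses will be supplied by the structural facts of \S \ref{sec:14} together with the torsion-freeness statements in Proposition \ref{prop:22}; the only real work is in part (1), where one must first check that the $\psi$-component $\RR^{\psi}$ lives inside the axiomatic framework of \S \ref{subsec:a22}.

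I would begin with part (2), which needs no localization at $\psi$. Apply Proposition \ref{prop:43} to the perfect complex $C = C_{\TT_i}^{\loc}$ over $\RR$. By \eqref{eq:42} and Lemma \ref{lem:a13}, $H^j(C_{\TT_i}^{\loc})$ is pseudo-null for every $j \neq 1$ (it is $D_{\TT_i}(K)$ for $j=1$ and the pseudo-null module $Z_{\TT_i}(K)$ for $j=2$), and $l = \chi_{\RR}(C_{\TT_i}^{\loc}) = \sum_{\pe \in \TT_i}\deg(\pe)$ exactly as in the proof of Lemma \ref{lem:rank}(1). The extra hypothesis of Proposition \ref{prop:43} is that $H^1(C_{\TT_i}^{\loc})_{\tor} = D_{\TT_i}(K)_{\tor}$ be pseudo-null; but Proposition \ref{prop:22}(2) says each $D_{\pe}(K)$, hence $D_{\TT_i}(K) = \bigoplus_{\pe \in \TT_i}D_{\pe}(K)$, is torsion-free, so that submodule is even zero. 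Proposition \ref{prop:43} then gives $\bigcap_{\RR}^l D_{\TT_i}(K) \simeq \Det_{\RR}^{-1}(C_{\TT_i}^{\loc})$.

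For part (1) the same strategy applies after passing to $\psi$-components. Since $|\Gal(K/\ttilde{E})^{(p')}|$ is prime to $p$ while $\Gal(E'/E)$ is a $p$-group, the composite $\Gal(K/\ttilde{E})^{(p')} \hookrightarrow \Gal(K/E) \twoheadrightarrow \Gal(E'/E)$ is trivial, so $\Gal(K/\ttilde{E})^{(p')} \subseteq \Gal(K/E')$; consequently the idempotent cutting $\RR^{\psi}$ out of $\RR$ already lies in $\Lambda$, and taking $\Lambda^{\psi}$ to be the corresponding direct factor of $\Lambda$ exhibits $\RR^{\psi} \supset \Lambda^{\psi}$ in the required shape — a finite product of regular local rings over which $\RR^{\psi}$ is free, with $\Hom_{\Lambda^{\psi}}(\RR^{\psi},\Lambda^{\psi}) \simeq \RR^{\psi}$ inherited from $\Hom_{\Lambda}(\RR,\Lambda) \simeq \RR$. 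Now apply Proposition \ref{prop:30}/\ref{prop:43} over $\RR^{\psi}$ to $C = (\RR^{\psi}\otimesL_{\RR}C_{\Sigma, \VV \setminus \UU^c})[1]$: that $H^j(C)$ is pseudo-null for $j \neq 1$ (the only contribution outside degree $1$ being a possible pseudo-null $\Z_p$) and that $l = \chi_{\RR^{\psi}}(C)$ were already verified over $\RR$ in the proof of Proposition \ref{prop:46}, and both properties are inherited by the direct factor $\RR^{\psi}$.

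The decisive input is the remaining hypothesis, that $H^1(C)_{\tor} = H^2(C_{\Sigma, \VV \setminus \UU^c})^{\psi}_{\tor}$ be pseudo-null: this is exactly where conditions \eqref{eq:134} and the assumed pseudo-nullity of $\XX_{(\Sigma_f \setminus \VV) \cup \UU^c}(K)^{\omega\psi^{-1}}$ enter, for by Proposition \ref{prop:22}(1) they force $H^2(C_{\Sigma, \VV \setminus \UU^c})^{\psi}$ to be torsion-free, whence that torsion submodule vanishes. Proposition \ref{prop:43} then yields $\bigcap_{\RR^{\psi}}^l H^1(C) \simeq \Det_{\RR^{\psi}}^{-1}(C)$, and rewriting the right-hand side via the shift identity $\Det^{-1}(C'[1]) \simeq \Det(C')$ and the compatibility of determinant modules with the base change $\RR \to \RR^{\psi}$ (last paragraph of \S \ref{subsec:a21}) turns this into the claimed isomorphism $\bigcap_{\RR^{\psi}}^l H^2(C_{\Sigma, \VV \setminus \UU^c})^{\psi} \simeq \Det_{\RR^{\psi}}(C_{\Sigma, \VV \setminus \UU^c})^{\psi}$. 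I expect the only genuinely delicate point to be this bookkeeping — verifying that $\RR^{\psi}$ satisfies the hypotheses of \S \ref{subsec:a22} and that forming $\psi$-components commutes with $\bigcap^l$, $\Det$, and $\otimesL$ — since all the arithmetic content has been isolated into Proposition \ref{prop:22}.
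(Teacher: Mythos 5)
Your proposal is correct and takes essentially the same approach as the paper: invoke Proposition \ref{prop:43} for $C = C_{\TT_i}^{\loc}$ and for $C = C_{\Sigma, \VV \setminus \UU^c}^{\psi}[1]$, supplying the pseudo-nullity and Euler-characteristic hypotheses from the verifications already made in Proposition \ref{prop:46}, and the crucial torsion-freeness from Proposition \ref{prop:22}. The only difference is that you spell out the $\psi$-component bookkeeping (checking that $e_\psi \in \Lambda$ and that $\RR^{\psi} \supset \Lambda^{\psi}$ satisfies the framework of \S\ref{subsec:a22}), which the paper leaves implicit; this is a reasonable and correct elaboration rather than a change of method.
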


\begin{proof}
Recall that, in Proposition \ref{prop:46}, we checked that the conditions of Proposition \ref{prop:30} hold for $C = C_{\Sigma, \VV \setminus \UU^c}^{\psi}[1]$ and for $C = C_{\TT_i}^{\loc}$.
Then by Proposition \ref{prop:22} we can apply Proposition \ref{prop:43} to deduce the proposition.
\end{proof}

\begin{prop}\label{prop:a18}
Suppose $d = 1$ (i.e., $E$ is an imaginary quadratic field).
Then we have the following.
\begin{itemize}
\item[(1)]
The $\RR$-module $\Z_p(1)$ does not appear as a submodule or a quotient module of $\XX(K)$.
\item[(2)]
If we assume that $\XX(K)^{\omega}$ is pseudo-null, then $\Z_p(1)$ does not appear as a submodule or a quotient module of $E^2(\XX(K))$.
\end{itemize}
\end{prop}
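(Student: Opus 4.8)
The plan is to treat both parts, and both the ``submodule'' and ``quotient'' assertions, by the same two‑step scheme. Step one: use the arithmetic‑to‑cohomology dictionary of \S\ref{sec:14}, the duality isomorphisms \eqref{eq:93}, \eqref{eq:94}, and Propositions \ref{prop:18}, \ref{prop:20}, \ref{prop:22} to pin the relevant pseudo‑null data (the pseudo‑null submodule $\XX(K)_{\PN}$, and in part (2) the module $E^2(\XX(K))$) down to a quotient of one explicit module. Step two: observe that a copy of $\Z_p(1)$ inside that data produces an everywhere‑unramified abelian pro‑$p$ extension of $K$ whose Galois group is $\Z_p(1)$ with the cyclotomic action, and rule this out using that $E$ is imaginary quadratic. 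Two preliminaries govern everything. Since $K\supseteq\tilde E(\mu_p)$ one has $K\supseteq\Q(\mu_{p^\infty})$, hence $\chi_{\cyc}|_{G_K}=1$ and $\Z_p(1)=\RR/I$ with $I=(\gamma-\chi_{\cyc}(\gamma):\gamma\in\Gal(K/E))$; since $d=1$ gives $\dim\RR=3$ and $\RR/I\cong\Z_p$, the ideal $I$ is a prime of height $2$, so $\Z_p(1)$ is pseudo‑null but not finite, it lies in the $\omega$‑component of $\RR$, and complex conjugation acts on it by $-1$ --- so one works in the minus (equivalently $\omega$‑isotypic) part throughout. Moreover $I$ is ``cyclotomic'', hence differs from every prime $\Ann_{\RR}(Z_v(K))$ attached to a decomposition group (on which $\chi_{\cyc}$ is nontrivial), so $\Hom_{\RR}(\Z_p(1),Z_v(K))=0$: no pseudo‑null ``$Z_v$‑type'' correction term can carry a copy of $\Z_p(1)$, which is what makes the bookkeeping work.

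For part (1), take $\Sigma=S_\infty(E)\cup S_p(E)\cup S_{\ram,p}(K/E)$ so that every $Z_v(K)$, $v\in\Sigma_f$, is pseudo‑null (Lemma \ref{lem:a13} for $v\mid p$; a rank count otherwise), and use \eqref{eq:77} with $S=\emptyset$ to write $\XX(K)$ as the kernel of a surjection $H^2(C_{\Sigma,\emptyset})\twoheadrightarrow Z^0_{\Sigma_f}(K)$. Feeding $H^0(C_{\Sigma,\emptyset})=0$ (corestriction kills the limit), $H^i(C_{\Sigma,\Sigma_f})=\XX_{\Sigma_f}(K),\Z_p$ for $i=2,3$, and $\pd_{\RR}\XX_{\Sigma_f}(K)\le 1$ (valid for $d=1$) into the spectral sequence of \eqref{eq:93} gives $0\to Q\to H^2(C_{\Sigma,\emptyset})\to E^1(\XX_{\Sigma_f}(K))^{\iota}(1)\to 0$, in which the $\Ext^1$‑term has no nonzero pseudo‑null submodule and $Q$ is a quotient of $\Z_p(1)$; hence $\XX(K)_{\PN}$ is a quotient of $\Z_p(1)$. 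So a $\Z_p(1)$‑submodule of $\XX(K)$ would force $\XX(K)_{\PN}\cong\Z_p(1)$, equivalently the connecting map $\Hom_{\RR}(\XX_{\Sigma_f}(K),\RR)\to E^2(\Z_p)\cong\Z_p$ coming from $C_{\Sigma,\Sigma_f}\to H^3(C_{\Sigma,\Sigma_f})[-3]$ to vanish --- and in either form this yields an everywhere‑unramified $\Z_p(1)$‑extension of $K$ with the cyclotomic action; the quotient assertion yields such an extension directly (Kummer theory and class field theory, using $K\supseteq\mu_{p^\infty}$). I would then contradict the existence of such an extension using that $E$ is imaginary quadratic --- either from the finiteness of $\mathcal O_E^{\times}$ (unit rank $0$) together with the (non‑vacuous) local condition at the deeply ramified primes above $p$, or, after descending along $\Gal(K/\Q(\mu_{p^\infty}))$ to the one‑variable cyclotomic tower over $E$, from the Iwasawa Main Conjecture and the non‑vanishing $L_p(1,\chi_E\omega)\ne 0$ (Brumer).

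For part (2), first identify $E^2(\XX(K))$: under the hypothesis that $\XX(K)^{\omega}$ is pseudo‑null, Proposition \ref{prop:22} (in the relevant $\omega$‑component, with $\VV=\Sigma_f$) kills the $E^1$‑term of the sequence in Proposition \ref{prop:20}(1), so that $E^2(\XX(K))^{\iota}(1)$ is, up to the $W=\Z_p$ correction term (which is not $\Z_p(1)$), the pseudo‑null cokernel of $\XX_{\Sigma_f}(K)\to\XX_{\Sigma_f}(K)^{**}$. Running the argument of part (1) once more --- now for $\XX_{S_p(E)}(K)$, whose reflexive hull is free of rank one by Proposition \ref{prop:43} --- reduces both the submodule and the quotient assertions for $E^2(\XX(K))$ to the same non‑existence statement, and we conclude.

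I expect the main obstacle to be Step two, the Kummer‑theoretic non‑existence: the homological formalism forces the pseudo‑null data to be a quotient of $\Z_p(1)$, but showing the quotient is proper --- that $K$ has no everywhere‑unramified $\Z_p(1)$‑extension of cyclotomic type --- is genuinely arithmetic. Its delicate points are the local condition at the primes above $p$, where $K$ is deeply ramified so that ``unramified at $p$'' is a real constraint rather than an automatic one, and, if one routes through the cyclotomic line in part (1), the legitimacy of the control/descent without any pseudo‑nullity hypothesis (one must know the descended module is one‑variable torsion with characteristic ideal divisible only by understood factors). A subsidiary recurring point is checking that none of the various pseudo‑null correction terms ($Z_v(K)$'s, $W=\Z_p$'s, reflexive‑hull cokernels, $E^2(\Z_p)$) is secretly $\Z_p(1)$, handled by comparing associated primes and by working in the $\omega$‑component.
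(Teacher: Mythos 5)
Your proposal takes a fundamentally different route from the paper, and both halves have real problems.

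For part (1), the paper simply cites \cite[Proposition~4.1.15]{BCG+}; you instead attempt to re-derive it, and you candidly flag that the arithmetic core (no everywhere-unramified $\Z_p(1)$-extension of cyclotomic type) is left unresolved, offering two sketch routes without carrying either out. That is a genuine gap, not a cosmetic one. There is also a technical slip in the set-up: you claim that with $\Sigma=S_\infty(E)\cup S_p(E)\cup S_{\ram,p}(K/E)$ every $Z_v(K)$ for $v\in\Sigma_f$ is pseudo-null, citing Lemma~\ref{lem:a13} for $v\mid p$ and ``a rank count otherwise.'' But for $v\nmid p$ the decomposition group $G_v(K/E)$ has $\Z_p$-rank $1$, so $Z_v(K)\simeq\Z_p[[\Gal(K/E)/G_v(K/E)]]$ has codimension $1$ and is \emph{not} pseudo-null; this is precisely why Theorem~\ref{thm:101} is stated under the extra hypothesis $\Sigma=S_p(E)\cup S_\infty(E)$. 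Your spectral-sequence bookkeeping therefore does not yield the clean ``$\XX(K)_{\PN}$ is a quotient of $\Z_p(1)$'' conclusion in the generality of Proposition~\ref{prop:a18}, which has no $\Sigma$ restriction since $\XX(K)$ does not depend on $\Sigma$.

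For part (2), you miss the key idea entirely: Proposition~\ref{prop:88} gives $E^2(E^2(M))\simeq M$ for $M\in\PP^2_{\RR}$, and since $\XX(K)^\omega$ is pseudo-null, $\XX(K)^\omega_{/\fin}\in\PP^2_{\RR}$. Combined with the standard fact $E^2(\Z_p(1))\simeq\Z_p(1)$, this turns a submodule (resp.\ quotient) $\Z_p(1)$ of $E^2(\XX(K))^\omega$ into a quotient (resp.\ submodule) $\Z_p(1)$ of $\XX(K)^\omega_{/\fin}$, hence of $\XX(K)^\omega$, which contradicts part (1). No arithmetic beyond part (1) is needed. Your alternative — identifying $E^2(\XX(K))$ via the cokernel of $\XX_{\Sigma_f}(K)\to\XX_{\Sigma_f}(K)^{**}$ through Proposition~\ref{prop:20}(1) — conflates $E^2(\XX(K))$ with $E^2(H^2(C_{\Sigma,\emptyset}))$; these differ by $E^2$-terms coming from $Z^0_{\Sigma_f}(K)$ via \eqref{eq:77}, which are not obviously negligible. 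So the ``identification'' is off, and the subsequent ``run the argument of part (1) once more'' is too vague to carry the weight it is asked to bear. In short: part (1) should be handled by citation (or by actually completing the Kummer/local argument), and part (2) should be the two-line biduality reduction, which is the observation the paper's proof hinges on.
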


\begin{proof}
(1) follows from \cite[Proposition 4.1.15]{BCG+}.
For (2), we observe the duality
\[
\XX(K)^{\omega}_{/\fin} \simeq E^2(E^2(\XX(K)^{\omega}_{/\fin}))
\]
(see Proposition \ref{prop:88}). 
Note also that $E^2(\Z_p(1)) \simeq \Z_p(1)$.
Then it is easy to see that $\Z_p(1)$ is a sub (resp.~a quotient) of $E^2(\XX(K))^{\omega}$ if and only if $\Z_p(1)$ is a quotient (resp.~a sub) of $\XX(K)^{\omega}$.
Therefore, claim (2) follows from claim (1).
\end{proof}

\begin{proof}[Proof of Theorem \ref{thm:105}]
For a while we treat all cases simultaneously.
Let $\TT_1 = \{\pe\}$ and $\TT_2 = \{\ol{\pe}\}$.
By Propositions \ref{prop:20} and \ref{prop:22} we have a commutative diagram with exact rows
\[
\xymatrix{
	0 \ar[r]
	& D_{\{\pe, \ol{\pe}\}}(K)^{\psi} \ar[r] \ar[d]_{f_1}
	& D_{\{\pe, \ol{\pe}\}}(K)^{\psi, **} \ar[d]_{f_2} \ar[r]
	& E^2(Z_{\{\pe, \ol{\pe}\}}(K)^{\omega \psi^{-1}})^{\iota}(1) \ar[r] \ar[d]_{f_3}
	& 0 \\
	0 \ar[r]
	& H^2(C_{\Sigma, \VV \setminus \UU^c})^{\psi} \ar[r]
	& H^2(C_{\Sigma, \VV \setminus \UU^c})^{\psi, **} \ar[r]
	& E^2(H^2(C_{\Sigma, (\Sigma_f \setminus \VV) \cup \UU^c})^{\omega \psi^{-1}})^{\iota}(1) \ar[r]
	& W_{(\Sigma_f \setminus \VV) \cup \UU^c}^{\psi}.
}
\]
By Proposition \ref{prop:74}, this diagram can be identified with the $\psi$-component of that in Proposition \ref{prop:46}.
We shall show that the snake lemma applied to this diagram proves Theorem \ref{thm:105}.

By Proposition \ref{prop:47}, we know that 
\[
\Coker(f_2) \simeq \frac{\RR^{\psi}}{(\LL^{\alg, \psi}_{\Sigma, \VV \setminus \cS_1^c}, \LL^{\alg, \psi}_{\Sigma, \VV \setminus \cS_2^c})}.
\]

To study $f_1$, we use the exact sequence
\begin{equation}\label{eq:a16}
0 \to \XX_{\UU}(K)^{\psi} 
\to H^2(C_{\Sigma, \VV \setminus \UU^c})^{\psi} 
\to Z_{(\Sigma_f \setminus \VV) \cup \UU^c}^0(K)^{\psi} \to 0
\end{equation}
obtained by \eqref{eq:77} and Lemma \ref{lem:135}.
Putting $\cA = \Coker(f_1)$, we deduce an exact sequence
\begin{equation}\label{eq:a17}
0 \to \XX_{\ol{\UU^c}}(K)^{\psi} \to \cA \to Z_{(\Sigma_f \setminus \VV) \cup \UU^c}^0(K)^{\psi} \to 0,
\end{equation}
which is claimed in the theorem.

Similarly, to study $f_3$, we use the exact sequence
\begin{equation}\label{eq:61}
0 \to \XX_{(\Sigma_f \setminus \VV) \cup \UU^c}(K)^{\omega \psi^{-1}} 
\to H^2(C_{\Sigma, (\Sigma_f \setminus \VV) \cup \UU^c})^{\omega \psi^{-1}} 
\to Z_{\UU}^0(K)^{\omega \psi^{-1}} \to 0
\end{equation}
obtained by \eqref{eq:77} and \eqref{eq:134}.
All modules in \eqref{eq:61} are pseudo-null by assumption.

Now we consider (1) (so $\UU = S_p(E)$).
In this case, since we have $E^i(\Z_p) = 0$ and $E^i(Z_{S_p(E)}(K)) = 0$ for $i \neq 2$, we deduce from \eqref{eq:61} an exact sequence
{\small
\[
0 \to E^2(\Z_p)^{\omega \psi^{-1}} \to E^2(Z_{S_p(E)}(K))^{\omega \psi^{-1}} 
\to E^2(H^2(C_{\Sigma, \Sigma_f \setminus S_p(E)}))^{\omega \psi^{-1}}
 \to E^2(\XX_{\Sigma_f \setminus S_p(E)}(K))^{\omega \psi^{-1}} \to 0.
\]
}
Hence we have
\[
\Ker(f_3) \simeq \Z_p(1)^{\psi}, 
\qquad \Coker(f_3) \simeq E^2(\XX_{\Sigma_f \setminus S_p(E)}(K)^{\omega \psi^{-1}})^{\iota}(1).
\]

Putting these together, the snake lemma induces an exact sequence
\[
\Z_p(1)^{\psi} \to \cA
\to \frac{\RR^{\psi}}{(\LL^{\alg, \psi}_{\Sigma, \{\pe\}}, \LL^{\alg, \psi}_{\Sigma, \{\ol{\pe}\}})} \to E^2(\XX_{\Sigma_f \setminus S_p(E)}(K)^{\omega \psi^{-1}})^{\iota}(1) \to W_{\Sigma_f \setminus S_p(E)}^{\psi}.
\]
The last map to $W_{\Sigma_f \setminus S_p(E)}^{\psi}$ is zero because, we only have to consider the case $\Sigma_f = S_p(E)$ and then Proposition \ref{prop:a18}(2) applies.
Let us moreover show that the image of the first map from $\Z_p(1)^{\psi}$ is finite.
It is easy to see that the module $Z_{\Sigma_f \setminus S_p(E)}(K)$ does not contain $\Z_p(1)$ as a submodule.
Then Proposition \ref{prop:a18}(1) implies that $\cA$ does not contain $\Z_p(1)$ as a submodule, so the claim holds.

Now we consider (2).
We first observe that the natural map
\[
Z_{\UU}^0(K)^{\omega \psi^{-1}} \to Z_{\{\pe, \ol{\pe}\}}(K)^{\omega \psi^{-1}}
\]
 is surjective.
If $d \geq 2$, this follows from $\UU \supsetneqq \{\pe, \ol{\pe}\}$.
If $\VV \supsetneqq S_p(E)$, then condition \eqref{eq:134} clearly implies $(\Z_p)^{\omega \psi^{-1}} = 0$ (i.e., $\psi \neq \omega$).
Thus the surjectivity holds.

Therefore, by \eqref{eq:61}, we can define a module $\BB$ by the following exact sequence
\begin{equation}\label{eq:301}
0 \to \BB \to H^2(C_{\Sigma, (\Sigma_f \setminus \VV) \cup \UU^c})^{\omega \psi^{-1}} 
\to Z_{\{\pe, \ol{\pe}\}}(K)^{\omega \psi^{-1}} \to 0,
\end{equation}
and then $\BB$ fits in the short exact sequence (with $\BB$ in the middle) claimed in the theorem.
On the other hand, from \eqref{eq:301} we deduce an exact sequence
\[
0 \to E^2(Z_{\{\pe, \ol{\pe}\}}(K)^{\omega \psi^{-1}}) \to E^2(H^2(C_{\Sigma, (\Sigma_f \setminus \VV) \cup \UU^c})^{\omega \psi^{-1}}) \to E^2(\BB) \to 0.
\]
Hence we have
\[
\Ker(f_3) = 0,
\qquad \Coker(f_3) \simeq E^2(\BB)^{\iota}(1).
\]

Putting these together, the snake lemma induces an exact sequence
\[
0 \to \cA \to 
\frac{\RR^{\psi}}{(\LL^{\alg, \psi}_{\Sigma, \VV \setminus \cS_1^c}, \LL^{\alg, \psi}_{\Sigma, \VV \setminus \cS_2^c})}
\to E^2(\BB)^{\iota}(1) \to W_{(\Sigma_f \setminus \VV) \cup \UU^c}^{\psi}.
\]
Finally we show that the last map is zero.
We only have to deal with the case where $\Sigma_f = \VV$ and $\UU^c = \emptyset$ (so $d = 1$).
In that case, we have $\BB \simeq \XX(K)^{\omega \psi^{-1}}$, and Proposition \ref{prop:a18}(2) again concludes the proof.
\end{proof}

\renewcommand{\thesection}{\Alph{section}}
\setcounter{section}{0}

\section{Properties of Fitting ideals}\label{sec:87}

Let $\RR$ be a ring which contains a regular local ring $\Lambda$ as in the first paragraph of \S \ref{subsec:a22}.

Let $\PP^1_{\RR}$ be the category of finitely generated torsion $\RR$-modules $P$ with $\pd_{\RR}(P) \leq 1$.
This is equivalent to that $P$ admits an exact sequence of the form
\begin{equation}\label{eq:pres}
0 \to \RR^a \overset{H}{\to} \RR^a \to P \to 0
\end{equation}
 (with an integer $a \geq 0$).
Then the Fitting ideal $\Fitt_{\RR}(P)$ is by definition generated by the single element $\det(H)$, which is a non-zero-divisor, so in particular $\Fitt_{\RR}(P)$ is invertible as a fractional ideal.

\begin{prop}\label{prop:A1}
For each $P \in \PP^1_{\RR}$, the following hold.
\begin{itemize}
\item[(1)]
We have $E^1(P) \in \PP^1_{\RR}$ and $E^1(E^1(P)) \simeq P$.
\item[(2)]
We have $\Fitt_{\RR}(E^1(P)) = \Fitt_{\RR}(P)$.
\end{itemize}
\end{prop}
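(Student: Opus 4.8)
The plan is to work entirely with the square presentation \eqref{eq:pres} and the elementary fact that a torsion $\RR$-module has vanishing $\RR$-dual. First I would fix an exact sequence $0 \to \RR^a \xrightarrow{H} \RR^a \to P \to 0$ as in \eqref{eq:pres}. Since $P$ is torsion, tensoring with the total ring of fractions $Q(\RR)$ kills $P$, so $H$ becomes an isomorphism over $Q(\RR)$ and hence $\det(H)$ is a non-zero-divisor of $\RR$. Consequently $\Hom_\RR(P,\RR) = 0$, because any $\RR$-linear map $P \to \RR$ is annihilated by a non-zero-divisor.

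Next I would apply $\Hom_\RR(-,\RR)$ to the presentation. Using $\Hom_\RR(P,\RR) = 0$ and the identification $\Hom_\RR(\RR^a,\RR) \simeq \RR^a$ — under which the dual of ``left multiplication by $H$'' is ``left multiplication by the transpose $H^{\mathrm t}$'', the module-structure convention of \S\ref{subsec:130} coinciding with the usual one because $\RR$ is commutative — I obtain an exact sequence $0 \to \RR^a \xrightarrow{H^{\mathrm t}} \RR^a \to E^1(P) \to 0$. Left-exactness here is forced by $\det(H^{\mathrm t}) = \det(H)$ being a non-zero-divisor, and the same fact shows $E^1(P)$ is torsion; thus this is again a presentation of the form \eqref{eq:pres}, so $E^1(P) \in \PP^1_\RR$. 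Applying the construction once more to $E^1(P)$ — legitimate since $E^1(P)$ is torsion, so its dual also vanishes — yields $0 \to \RR^a \xrightarrow{(H^{\mathrm t})^{\mathrm t}} \RR^a \to E^1(E^1(P)) \to 0$, and $(H^{\mathrm t})^{\mathrm t} = H$ identifies $E^1(E^1(P))$ with $\Coker(H) = P$. This proves (1).

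For (2) I would simply read the Fitting ideals off the two presentations: $\Fitt_\RR(P) = (\det H)$ and $\Fitt_\RR(E^1(P)) = (\det H^{\mathrm t}) = (\det H)$, so the two ideals agree. I do not expect a genuine obstacle in any of this; the only points deserving care are verifying that $\det(H)$ is a non-zero-divisor — which is precisely what keeps the dualized sequences left-exact and the dualized modules torsion, so that we stay inside $\PP^1_\RR$ — and confirming that dualizing a length-one free resolution indeed computes $E^0$ and $E^1$ as the kernel and cokernel of $H^{\mathrm t}$, both of which are routine.
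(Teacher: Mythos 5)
Your proof is correct and takes essentially the same route as the paper's: dualize the square presentation \eqref{eq:pres}, use $\Hom_{\RR}(P,\RR)=0$ (a consequence of $P$ being torsion) to obtain a square presentation of $E^1(P)$, read off the Fitting ideal from $\det(H^*)=\det(H)$, and dualize once more to recover $P$. Your additional remarks — identifying $H^*$ with $H^{\mathrm t}$, noting that the module-structure convention of \S\ref{subsec:130} is harmless for commutative $\RR$, and invoking $\det(H)$ being a non-zero-divisor as an alternative justification for injectivity and for torsionness of $E^1(P)$ — are all accurate, though the injectivity remark is redundant once $\Hom_{\RR}(P,\RR)=0$ has been used.
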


\begin{proof}
This proposition is well-known to experts, but for the convenience of the reader we give a proof.
Let us take a sequence of the form \eqref{eq:pres}, which yields an exact sequence 
\begin{equation}\label{eq:pres'}
0 \to (\RR^a)^* \overset{H^*}{\to} (\RR^a)^* \to E^1(P) \to 0,
\end{equation}
 where $H^*$ is the map induced by $H$.
This implies that $E^1(P) \in \PP^1_{\RR}$ and
\[
\Fitt_{\RR}(E^1(P)) = (\det(H^*)) = (\det(H)) = \Fitt_{\RR}(P).
\]
Moreover, \eqref{eq:pres'} yields an exact sequence 
\[
0 \to (\RR^a)^{**} \overset{H^{**}}{\to} (\RR^a)^{**} \to E^1(E^1(P)) \to 0.
\]
Comparing this sequence with \eqref{eq:pres}, together with $(\RR^a)^{**} \simeq \RR^a$, shows $E^1(E^1(P)) \simeq P$.
\end{proof}

Now we consider codimension two analogues of Proposition \ref{prop:A1}.
Let $\PP^2_{\RR}$ be the category of pseudo-null $\RR$-modules $M$ with $\pd_{\RR}(M) \leq 2$.

\begin{prop}\label{prop:88}
For each $M \in \PP^2_{\RR}$, the following hold.
\begin{itemize}
\item[(1)]
We have $E^2(M) \in \PP^2_{\RR}$ and $E^2(E^2(M)) \simeq M$.
\item[(2)]
We have $\Fitt_{\RR}(E^2(M)) = \Fitt_{\RR}(M)$.
\end{itemize}
\end{prop}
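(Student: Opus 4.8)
The plan is to follow the proof of Proposition \ref{prop:A1} as closely as possible, the only new feature being that the relevant presentation matrices are rectangular rather than square. First I would record the basic homological facts about an object $M \in \PP^2_{\RR}$ (assuming $M \neq 0$; the zero case is trivial). Since $M$ is pseudo-null it is also pseudo-null over $\Lambda$, so its codimension over $\Lambda$ is at least $2$; on the other hand, because $\RR$ is free over $\Lambda$, a projective resolution of $M$ of length $\leq 2$ over $\RR$ is also one over $\Lambda$, whence $\pd_{\Lambda}(M) \leq 2$. The Auslander--Buchsbaum formula over the regular local ring $\Lambda$ then forces $\pd_{\Lambda}(M) = 2$ and $\depth_{\Lambda}(M) = \dim_{\Lambda}(M) = \dim(\Lambda) - 2$; that is, $M$ is Cohen--Macaulay, hence perfect, of codimension two over $\Lambda$. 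Consequently $\Ext^i_{\Lambda}(M, \Lambda) = 0$ for every $i \neq 2$, and via the canonical isomorphisms $\Ext^i_{\RR}(M, \RR) \simeq \Ext^i_{\Lambda}(M, \Lambda)$ recalled in \S\ref{subsec:a22} we obtain $E^0(M) = E^1(M) = 0$ (and $E^i(M) = 0$ for $i \geq 3$).

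Now take a finite free resolution $0 \to \RR^a \overset{\phi}{\to} \RR^b \overset{\psi}{\to} \RR^c \to M \to 0$; since $M$ is torsion, tensoring with the total ring of fractions gives $b = a + c$. Applying $\Hom_{\RR}(-, \RR)$ and using $E^0(M) = E^1(M) = 0$ produces an exact sequence
\[
0 \to (\RR^c)^* \overset{\psi^*}{\to} (\RR^b)^* \overset{\phi^*}{\to} (\RR^a)^* \to E^2(M) \to 0,
\]
which is a free resolution of $E^2(M)$ of length $\leq 2$; in particular $\pd_{\RR}(E^2(M)) \leq 2$. Pseudo-nullity of $E^2(M)$ follows because $\Ext$ commutes with localization and, for a prime $\Qu$ of $\RR$ with $\height(\Qu) \leq 1$, we have $M_{\Qu} = 0$ and hence $E^2(M)_{\Qu} = \Ext^2_{\RR_{\Qu}}(M_{\Qu}, \RR_{\Qu}) = 0$; thus $E^2(M) \in \PP^2_{\RR}$. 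Dualizing the displayed resolution once more, this time using $E^0(E^2(M)) = E^1(E^2(M)) = 0$ together with the natural identifications $(\RR^n)^{**} \simeq \RR^n$ and $\phi^{**} = \phi$, $\psi^{**} = \psi$, we recover the original resolution of $M$, which proves $E^2(E^2(M)) \simeq M$ and finishes part (1).

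For part (2) I would read off the Fitting ideals from the two resolutions above: $\Fitt_{\RR}(M)$ is generated by the $c \times c$ minors of the $c \times b$ matrix of $\psi$, while $\Fitt_{\RR}(E^2(M))$ is generated by the $a \times a$ minors of the $a \times b$ matrix of $\phi^*$, i.e.\ by the maximal minors of $\phi$. Hence part (2) amounts to the determinantal identity, for the minimal free resolution of a perfect codimension-two module, that the $(c \times c)$-minor ideal of $\psi$ equals the maximal-minor ideal of $\phi$; this is exactly the Hilbert--Burch theorem when $M$ is cyclic ($c = 1$), and its extension to arbitrary $c$ due to Buchsbaum and Rim. (One may save a little work here: by part (1) it suffices to establish a single inclusion, say $\Fitt_{\RR}(E^2(M)) \subseteq \Fitt_{\RR}(M)$, since applying it with $E^2(M)$ in place of $M$ yields the opposite inclusion.) I expect this determinantal identity to be the one genuinely non-formal ingredient: the passage between the minors of $\psi$ and of $\phi$ really uses the structure theory of grade-two perfect modules, whereas everything else is the same bookkeeping as in the proof of Proposition \ref{prop:A1}.
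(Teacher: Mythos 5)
Your proof is correct but takes a genuinely different route from the paper's, especially in part (2). For part (1), you work directly with a length-two free resolution of $M$ and use $E^0(M)=E^1(M)=0$ (justified via Auslander--Buchsbaum over $\Lambda$ and the grade/pseudo-nullity of $M$) to dualize it twice; the paper instead picks a surjection $P \twoheadrightarrow M$ with $P \in \PP^1_{\RR}$ (e.g.\ a direct sum of copies of $\RR/(f)$ for a non-zero-divisor $f$ annihilating $M$), so that both $P$ and the kernel $Q$ lie in $\PP^1_{\RR}$, and reduces everything to Proposition~\ref{prop:A1}. Both arguments hinge on the same grade-two vanishing. The substantive divergence is part (2): you reduce $\Fitt_{\RR}(M)=\Fitt_{\RR}(E^2(M))$ to the determinantal identity $I_c(\psi)=I_a(\phi)$ for the two outer maps of a length-two free resolution and cite it as a Buchsbaum--Rim generalization of Hilbert--Burch. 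That identity is indeed true, but its clean source is the Buchsbaum--Eisenbud first structure theorem for finite free resolutions (the factorization $\wedge^{c}\psi = a_1\cdot(\wedge^{a}\phi)^*$ with $a_1$ a non-zero-divisor, made a unit by the symmetry $E^2(E^2(M))\simeq M$), rather than Buchsbaum--Rim; one would want to pin down the reference and check it applies over the non-regular Gorenstein ring $\RR$. The paper instead stays entirely inside Fitting-ideal calculus and avoids structure theory: Lemma~\ref{lem:a88} produces a presentation $\RR^a\overset{H}{\to}\RR^b\to M\to 0$ all of whose $b\times b$ minors are non-zero-divisors (a generic-position argument requiring $\Lambda$ infinite); each $b\times b$ submatrix $\alpha$ gives $P=\Coker(\alpha)\in\PP^1_{\RR}$ surjecting onto $M$ with kernel $Q\in\PP^1_{\RR}$, and Proposition~\ref{prop:A1}(2) together with localization at height-one primes (pseudo-nullity of $M$ forces $Q_{\qu}\simeq P_{\qu}$) yields $\det(\alpha)\in\Fitt_{\RR}(E^2(M))$, hence $\Fitt_{\RR}(M)\subset\Fitt_{\RR}(E^2(M))$, with the reverse supplied by part (1), exactly as you also note. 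In short, your route is more structural but rests on a nontrivial external theorem; the paper's is longer but elementary and self-contained.
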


First we show claim (1).

\begin{proof}[Proof of Proposition \ref{prop:88}(1)]
This is proved in a similar way as Proposition \ref{prop:A1}(1).
Let us choose a module $P \in \PP^1_{\RR}$ and a surjective homomorphism from $P$ to $M$; for instance, if $f \in \RR$ is an annihilator of $M$ that is a non-zero-divisor, we may take $P$ as a direct sum of copies of $\RR/(f)$.
Then, letting $Q$ denote the kernel of $P \to M$, we obtain an exact sequence
\begin{equation}\label{eq:pres2}
0 \to Q \to P \to M \to 0.
\end{equation}
By $M \in \PP^2_{\RR}$ and $P \in \PP^1_{\RR}$, we have $Q \in \PP^1_{\RR}$.
Then sequence \eqref{eq:pres2} induces an exact sequence
\begin{equation}\label{eq:pres2'}
0 \to E^1(P) \to E^1(Q) \to E^2(M) \to 0.
\end{equation}
This sequence, together with the fact $E^1(P), E^1(Q) \in \PP^1_{\RR}$ by Proposition \ref{prop:A1}(1), implies that $\pd_{\RR}(E^2(M)) \leq 2$.
Since $M$ is pseudo-null, so is $E^2(M)$.
Therefore, we have $E^2(M) \in \PP^2_{\RR}$.

Moreover, comparing \eqref{eq:pres2} with the sequence 
\[
0 \to E^1(E^1(Q)) \to E^1(E^1(P)) \to E^2(E^2(M)) \to 0
\]
 induced by \eqref{eq:pres2'}, and using the facts $E^1(E^1(P)) \simeq P$ and $E^1(E^1(Q)) \simeq Q$ by Proposition \ref{prop:A1}(1), we obtain $E^2(E^2(M)) \simeq M$.
\end{proof}

For the proof of Proposition \ref{prop:88}(2), we need an auxiliary lemma.

\begin{lem}\label{lem:a88}
For any finitely generated torsion $\RR$-module $M$, there exists a finite presentation of $M$
\[
\RR^a \overset{H}{\to} \RR^b \to M \to 0
\]
such that all $b \times b$ minors of the presentation matrix of $H$ are non-zero-divisors of $\RR$.
\end{lem}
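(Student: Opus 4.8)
The plan is to build the desired presentation by a genericity argument, after reducing the minor condition to a statement modulo the minimal primes of $\RR$. First I would record two preliminaries. Since $\RR$ is module-finite and free over the regular local ring $\Lambda$, it is Cohen--Macaulay, so it has no embedded primes; hence an element of $\RR$ is a non-zero-divisor if and only if it lies in none of the minimal primes $P_1,\dots,P_r$ of $\RR$. Moreover $\RR$ is faithfully flat over the domain $\Lambda$, so each $P_i$ lies over $(0)\subset\Lambda$; thus $\Lambda\hookrightarrow\RR/P_i$ and the residue field $\kappa_i:=\Frac(\RR/P_i)$ is a finite extension of $\Frac(\Lambda)$. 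We may assume $\dim\Lambda\ge 1$ (if $\dim\Lambda=0$ then $\RR$ is Artinian, every torsion module vanishes, and the lemma is vacuous), and then $\Frac(\Lambda)$, hence each $\kappa_i$, is an infinite field. This infiniteness is the crucial input.

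Next I would fix the shape of the presentation. Choose a surjection $\pi\colon\RR^b\twoheadrightarrow M$ and put $N=\Ker\pi$. Since $M$ is torsion its annihilator contains a non-zero-divisor $\delta$, so $\delta\RR^b\subseteq N$; pick $w_1,\dots,w_c\in N$ whose images generate $N/\delta\RR^b$. For any $z_1,\dots,z_c\in\RR^b$ the elements $\delta e_1,\dots,\delta e_b,\,w_1+\delta z_1,\dots,w_c+\delta z_c$ still generate $N$ (the $w_j+\delta z_j$ have the same class as $w_j$ in $N/\delta\RR^b$), so $H_z:=[\delta e_1\mid\dots\mid\delta e_b\mid w_1+\delta z_1\mid\dots\mid w_c+\delta z_c]$, with $b+c$ columns, is a presentation matrix of $M$ for every $z=(z_j)$. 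The key computation: modulo a fixed $P_i$, expanding any $b\times b$ minor of $H_z$ along the columns coming from the $\delta e_k$ shows it equals $\pm\bar\delta^{\,k}$ times a minor (of size $b-k$) of the $b\times c$ matrix $W_z^{(i)}:=[\,\overline{w_j+\delta z_j}\,]_j$ over $\kappa_i$; since $\bar\delta\neq 0$ in $\kappa_i$, all $b\times b$ minors of $H_z$ avoid $P_i$ exactly when $W_z^{(i)}$ is totally nonsingular, i.e.\ every minor of every size is nonzero.

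It remains to choose a single $z$ making $W_z^{(i)}$ totally nonsingular for all $i$ at once. For each $i$ the assignment $z\mapsto W_z^{(i)}$ from $\RR^{bc}$ to $M_{b\times c}(\kappa_i)$ is the reduction map $\RR^{bc}\to M_{b\times c}(\RR/P_i)$ composed with multiplication by the unit $\bar\delta$ and a translation; as $\RR/P_i$ is an infinite domain with fraction field $\kappa_i$, its matrices are Zariski-dense in $M_{b\times c}(\kappa_i)$, hence so is the image of this map. Then the image of $\RR^{bc}\to\prod_i M_{b\times c}(\kappa_i)$ is Zariski-dense too: a polynomial $(g_i)_i$ over $\prod_i\kappa_i$ vanishing on it forces each $g_i$ to vanish on a dense subset of $M_{b\times c}(\kappa_i)$, so $g_i=0$. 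On the other hand, over the infinite field $\kappa_i$ the locus of totally nonsingular $b\times c$ matrices is open and nonempty (a Cauchy matrix witnesses it), so $\prod_i\{\text{totally nonsingular}\}$ is a nonempty open subset of $\prod_i M_{b\times c}(\kappa_i)$, which must meet the dense image; any such $z$ does the job, and $\RR^{b+c}\xrightarrow{H_z}\RR^b\to M\to 0$ is then the required presentation.

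The step I expect to be the main obstacle is the one just carried out: reconciling the requirement that the chosen relations still generate $N$ with the requirement that all maximal minors of the presentation matrix be non-zero-divisors. The device that dissolves it is to let $\delta\RR^b$ play both roles — supplying the relations $\delta e_k$, which make generation automatic and collapse the minor condition to total nonsingularity of the smaller block $W_z^{(i)}$, and supplying the perturbations $\delta z_j$ of the remaining relations, which leave generation untouched; the genericity then goes through precisely because $\RR$ maps onto the infinite ring $\RR/P_i$ at each minimal prime, which is where the hypothesis that $\RR$ is finite over a positive-dimensional regular local ring enters.
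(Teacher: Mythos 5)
Your reduction to the minimal primes $P_1,\dots,P_r$ of $\RR$ is sound (since $\RR$ is Gorenstein, hence Cohen--Macaulay, an element is a non-zero-divisor iff it avoids every minimal prime), and your key computation — that the $b\times b$ minors of $H_z$ avoid $P_i$ exactly when the block $W_z^{(i)}$ is totally nonsingular over $\kappa_i$ — is correct. The gap is the final genericity step. The density you establish says only that no nonzero tuple $(g_i)_i \in \prod_i\kappa_i[z]$ vanishes on the whole image; unwinding evaluation over the product ring $\prod_i\kappa_i$, this gives, for any such tuple, some $z$ with $g_i(W_z^{(i)}) \neq 0$ for \emph{at least one} $i$ (since $(g_i)_i$ vanishes at $(Y_i)_i$ precisely when $g_i(Y_i)=0$ for \emph{every} $i$), whereas you need a single $z$ that works for \emph{all} $i$ simultaneously. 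The topological shortcut does not repair this: over the non-domain $\prod_i\kappa_i$ one has $V(f)\cup V(g) \subsetneq V(fg)$ in general, so ``closed $= V(\text{ideal})$'' is not a topology, and the complement $\bigcup_i\{Y : Y_i \text{ not totally nonsingular}\}$ of your target set is not cut out by any ideal of $(\prod_i\kappa_i)[z]$; hence ``dense meets open'' does not apply in the sense required.

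What is missing is a device that collapses the finitely many non-vanishing conditions (one per $P_i$) into a single polynomial non-vanishing over the \emph{one} infinite domain $\Lambda$; this is exactly what the paper supplies. It multiplies all the relevant minors into a single element $D_X\in\RR$ and uses the norm $\sN\colon\RR\to\Lambda$ to translate ``$D_X$ is a non-zero-divisor'' into ``$\sN(D_X)\neq 0$ in $\Lambda$'', a polynomial condition over $\Lambda$ with an explicitly nonzero leading term. Your construction can be rescued in the same spirit: restrict $z$ to $M_{b\times c}(\Lambda)$ (its image in each $M_{b\times c}(\kappa_i)$ is still Zariski-dense, as $\Lambda$ is an infinite domain and $\kappa_i/\Frac(\Lambda)$ is a finite extension), expand each $g_i(W_z^{(i)})\in\kappa_i[z]$ in a $\Frac(\Lambda)$-basis of $\kappa_i$, pick a nonzero coordinate $h_i\in\Frac(\Lambda)[z]$, clear denominators, and choose $z\in\Lambda^{bc}$ with $\prod_i h_i(z)\neq 0$. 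With this repair your argument and the paper's share the same engine — perturb the presentation by $\delta$ times an identity block and apply genericity over $\Lambda$ — differing only in how the minor conditions at the several $P_i$ are packaged (via the norm in the paper, via $\Frac(\Lambda)$-coordinates here).
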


\begin{proof}
If $\Lambda$ has only finitely many elements, then $\Lambda$ is a finite field and the only torsion module is the zero module, so the assertion is trivial.
Therefore, we may assume that $\Lambda$ has infinitely many elements.

Let $\RR^m \overset{H}{\to} \RR^n \to M \to 0$ be any finite presentation of $M$ ($m \geq n$).
We identify $H$ with its presentation matrix in $M_{m, n}(\RR)$.
Let us fix a non-zero-divisor $f \in \RR$ that annihilates $M$.
For an element $X = (x_{i j})_{i, j} \in M_{m, n}(\Lambda)$, we consider a matrix
\[
H_X = \begin{pmatrix}
f I_n \\ H + f X
\end{pmatrix} \in M_{m+n, n}(\RR),
\]
where $I_n$ denotes the identity matrix of size $n$.
Then $H_X$ can also be regarded as a finite presentation of $M$.
We will find an element $X \in M_{m, n}(\Lambda)$ such that all $n \times n$ minors of $H_X$ are non-zero-divisors, which would complete the proof of the lemma.

By a pair $(I, J)$, we will mean a pair of subsets $I \subset \{1, 2, \dots, m\}$ and $J \subset \{1, 2, \dots, n\}$ such that $\# I = \# J$.
For a matrix $A = (a_{ij})_{1 \leq i \leq m, 1 \leq j \leq n}$ of size $m \times n$, let us write
\[
A_{I, J} = (a_{ij})_{i \in I, j \in J},
\]
i.e., the square submartix of $A$ obtained by picking the rows in $I$ and the columns in $J$.

We observe that any $n \times n$ minor of $H_X$ is the product of a power of $f$ and a minor of $H + fX$ (not necessary of degree $n$).
Therefore, the required property of $X$ is equivalent to that $\det((H + fX)_{I, J}) \in \RR$ is a non-zero-divisor for any pair $(I, J)$.

Let us write $\RR[X]$ (resp.~$\Lambda[X]$) for the polynomial ring over $\RR$ (resp.~$\Lambda$) in variables $\{x_{ij}\}_{1 \leq i \leq m, 1 \leq j \leq n}$; here we regard $\{x_{ij}\}_{i, j}$ as indeterminates.
For example, $\det(X_{I, J}) \in \Lambda[X]$ is a nonzero homogenous polynomial of degree $\# I (= \# J)$.
By the definition of the determinant, we have
\begin{equation}\label{eq:Adet2}
\det((H + fX)_{I, J}) = f^{\# I} \det(X_{I, J}) + (\text{lower degree}),
\end{equation}
where (lower degree) denotes a polynomial in $\RR[X]$ whose degree is strictly less than $\# I$.
We put
\[
D_X = \prod_{(I, J)} \det((H + fX)_{I, J}),
\]
where $(I, J)$ runs over all the pairs we are considering.
By taking the product of \eqref{eq:Adet2}, we obtain
\begin{equation}\label{eq:Adet}
D_X = f^N F(X) + (\text{lower degree})
\end{equation}
with $N = \sum_{(I, J)} \# I$, $F(X) = \prod_{(I, J)} \det(X_{I, J})$, and (lower degree) denotes a polynomial in $\RR[X]$ whose degree is strictly less than that of $F(X)$.

Since $\RR$ is free of finite rank over $\Lambda$, we have the norm map $\sN: \RR \to \Lambda$; for $a \in \RR$, the norm $\sN(a)$ is defined as the determinant of the presentation matrix of the $\Lambda$-homomorphism $a: \RR \to \RR$ (with respect to any basis).
Note that an element $a \in \RR$ is a non-zero-divisor if and only if $\sN(a) \neq 0$.
By \eqref{eq:Adet}, we find
\[
\sN(D_X) = \sN(f)^N F(X)^{\rank_{\Lambda}(\RR)} + (\text{lower degree}),
\]
where (lower degree) denotes a polynomial in $\Lambda[X]$ whose degree is strictly less than that of $F(X)^{\rank_{\Lambda}(\RR)}$.

Now it is enough to show that $\sN(D_X) \neq 0$ for some $X \in M_{m, n}(\Lambda)$.
Since $\det(X_{I, J})$ is a nonzero homogenous polynomial in $\Lambda[X]$, so is $F(X)$.
Recall that we assume $\Lambda$ has infinitely many elements.
Therefore, there exists an element $X_0 \in M_{m, n}(\Lambda)$ such that $F(X_0) \neq 0$.
Once we fix such an $X_0$, we may regard $\sN(D_{\lambda X_0})$ as a polynomial in $\lambda \in \Lambda$ whose leading coefficient is $\sN(f)^N F(X_0)^{\rank_{\Lambda}(\RR)} \neq 0$.
Then we find an element $\lambda \in \Lambda$ such that $\sN(D_{\lambda X_0}) \neq 0$.
This completes the proof of the lemma.
\end{proof}

\begin{proof}[Proof of Proposition \ref{prop:88}(2)]
By claim (1), it is enough to show a single inclusion, say $\supset$.
Let us take a finite presentation $\RR^a \overset{H}{\to} \RR^b \overset{\pi}{\to} M \to 0$ as in Lemma \ref{lem:a88}.
Take any $b \times b$ submatrix of $H$ and let $\alpha: \RR^b \to \RR^b$ be the corresponding homomorphism.
By the definition of Fitting ideals, it is enough to show $\det(\alpha) \in \Fitt_{\RR}(E^2(M))$.

Put $P = \Coker(\alpha)$, so we have $\Fitt_{\RR}(P) = (\det(\alpha))$.
Since $\alpha$ is injective by the choice of $H$, we also have $P \in \PP^1_{\RR}$.
On the other hand, we have $\pi \circ \alpha = 0$, so $\pi$ induces a surjective homomorphism $P \to M$.
We define $Q$ as its kernel, so we obtain an exact sequence
\begin{equation}\label{eq:90}
0 \to Q \to P \to M \to 0.
\end{equation}
Observe that $M \in \PP^2_{\RR}$ and $P \in \PP^1_{\RR}$ imply $Q \in \PP^1_{\RR}$.
Then \eqref{eq:90} induces an exact sequence
\[
0 \to E^1(P) \to E^1(Q) \to E^2(M) \to 0.
\]
In particular, since $E^2(M)$ is a quotient of $E^1(Q)$, we have 
\[
\Fitt_{\RR}(E^2(M)) \supset \Fitt_{\RR}(E^1(Q)).
\]
Since $Q \in \PP^1_{\RR}$, Proposition \ref{prop:A1}(2) implies $\Fitt_{\RR}(E^1(Q)) = \Fitt_{\RR}(Q)$.
Finally we show $\Fitt_{\RR}(Q) = \Fitt_{\RR}(P)$.
Let $\qu$ be any height one prime of $\Lambda$, and recall that the subscript $(-)_{\qu}$ denotes the localization with respect to the multiplicative set $\Lambda \setminus \qu$.
By \eqref{eq:90} and the pseudo-nullity of $M$, we have $Q_{\qu} \simeq P_{\qu}$, so 
$\Fitt_{\RR}(Q)\RR_{\qu} = \Fitt_{\RR}(P)\RR_{\qu}$.
Then the claim $\Fitt_{\RR}(Q) = \Fitt_{\RR}(P)$ follows from this, since both sides are invertible (moreover principal) fractional ideals of $\RR$.
\end{proof}

\begin{rem}\label{rem:95}
As mentioned in Remark \ref{rem:BCG+b_main}, when $\RR$ is a product of regular local rings, we have $c_2(E^2(M)) = c_2(M)$ for a pseudo-null module $M$.
Although the statement of Proposition \ref{prop:88} is quite similar, the proofs have nothing in common.
\end{rem}

Let us suppose that $\RR$ is a product of regular local rings.
Then, for each finitely generated torsion $\RR$-module $M$, we have a classical definition of characteristic ideal $\cha_{\RR}(M)$, using the structure theorem of finitely generated modules (up to pseudo-null modules).
By definition $\cha_{\RR}(M)$ is principal.
The characteristic ideals satisfy the additivity properties with respect to exact sequences, and we have $\cha_{\RR}(P) = \Fitt_{\RR}(P)$ for $P \in \PP^1_{\RR}$.
In fact, these properties characterize $\cha_{\RR}(-)$.

The following proposition is used in \S \ref{subsec:132}.

\begin{prop}\label{prop:91}
Suppose that $\dim(\Lambda) = 2$ and that $\RR$ is a product of regular local rings.
For each finitely generated torsion $\RR$-module $M$, we have
\[
\Fitt_{\RR}(M) = \cha_{\RR}(M) \Fitt_{\RR}(E^2(M)).
\]
\end{prop}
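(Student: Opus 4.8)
The plan is to decompose $M$ along its maximal pseudo-null submodule and reduce the identity to the two pieces, invoking the appendix results on each. Since $\Fitt_{\RR}(-)$, $\cha_{\RR}(-)$ and $E^2(-)$ are all compatible with the decomposition of $\RR$ as a product of regular local rings, I may and will assume that $\RR$ itself is a regular local ring (necessarily of Krull dimension at most two). I would then work with the exact sequence $0 \to M_{\PN} \to M \to M_{/\PN} \to 0$. The submodule $M_{\PN}$ is pseudo-null, and since every finitely generated module over a regular local ring of dimension $\leq 2$ has projective dimension $\leq 2$, we get $M_{\PN} \in \PP^2_{\RR}$. On the other hand $M_{/\PN}$ has no nonzero pseudo-null submodule — such a submodule would pull back to a pseudo-null submodule of $M$ strictly containing $M_{\PN}$ — so $\depth_{\RR}(M_{/\PN}) \geq 1$, whence $\pd_{\RR}(M_{/\PN}) \leq 1$ by the Auslander--Buchsbaum formula; thus $M_{/\PN} \in \PP^1_{\RR}$ and in particular it admits a resolution $0 \to \RR^a \xrightarrow{H} \RR^a \to M_{/\PN} \to 0$ with $\Fitt_{\RR}(M_{/\PN}) = (\det H)$.

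Next I would evaluate the two factors on the right-hand side. For the characteristic ideal, additivity of $\cha_{\RR}$ together with $\cha_{\RR}(M_{\PN}) = \RR$ gives $\cha_{\RR}(M) = \cha_{\RR}(M_{/\PN}) = \Fitt_{\RR}(M_{/\PN})$, the last equality being the identification of $\cha_{\RR}$ and $\Fitt_{\RR}$ on $\PP^1_{\RR}$ recalled just before the proposition. For the $\Ext$ term, the long exact sequence obtained by applying $\RHom_{\RR}(-,\RR)$ to $0 \to M_{\PN} \to M \to M_{/\PN} \to 0$, together with $E^1(M_{\PN}) = 0$ (since $M_{\PN}$ has codimension $\geq 2$) and $E^2(M_{/\PN}) = E^3(M_{/\PN}) = 0$ (since $\pd_{\RR}(M_{/\PN}) \leq 1$), yields an isomorphism $E^2(M) \simeq E^2(M_{\PN})$; then Proposition \ref{prop:88}(2) gives $\Fitt_{\RR}(E^2(M)) = \Fitt_{\RR}(E^2(M_{\PN})) = \Fitt_{\RR}(M_{\PN})$. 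So the statement reduces to the single factorization $\Fitt_{\RR}(M) = \Fitt_{\RR}(M_{\PN}) \cdot \Fitt_{\RR}(M_{/\PN})$.

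For that last identity I would combine the length-one resolution $0 \to \RR^a \xrightarrow{H} \RR^a \to M_{/\PN} \to 0$ with any finite presentation $\RR^{p_1} \xrightarrow{\phi} \RR^{p_0} \to M_{\PN} \to 0$, and apply the horseshoe lemma to $0 \to M_{\PN} \to M \to M_{/\PN} \to 0$: this produces a presentation $\RR^{p_1 + a} \xrightarrow{\Phi} \RR^{p_0 + a} \to M \to 0$ with $\Phi = \left(\begin{smallmatrix} \phi & \tau \\ 0 & H \end{smallmatrix}\right)$ for some $\tau$. Expanding a maximal $(p_0 + a) \times (p_0 + a)$ minor of $\Phi$ by Laplace along the bottom $a$ rows, one sees that a chosen set of $p_0 + a$ columns contributes nonzero only if it contains all $a$ of the last columns, in which case the minor equals $\pm \det(H)$ times a $p_0 \times p_0$ minor of $\phi$; conversely every such product occurs. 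Hence $\Fitt_{\RR}(M) = (\det H) \cdot \Fitt_{\RR}(M_{\PN}) = \Fitt_{\RR}(M_{/\PN}) \cdot \Fitt_{\RR}(M_{\PN})$, and assembling the three computations gives $\Fitt_{\RR}(M) = \cha_{\RR}(M) \cdot \Fitt_{\RR}(E^2(M))$.

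I expect the main obstacle to be precisely this last factorization: Fitting ideals are not multiplicative along short exact sequences in general, and the argument genuinely relies on $M_{/\PN}$ lying in $\PP^1_{\RR}$, so that its Fitting ideal is the principal ideal generated by the non-zero-divisor $\det H$ and the block-triangular presentation coming from the horseshoe lemma converts the inclusion $\Fitt_{\RR}(M_{\PN})\Fitt_{\RR}(M_{/\PN}) \subseteq \Fitt_{\RR}(M)$ into an equality. A secondary point requiring care is the reduction to a single regular local factor, which is what makes the Auslander--Buchsbaum step and the classical definition of $\cha_{\RR}$ legitimate.
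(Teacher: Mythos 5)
Your proposal is correct and follows essentially the same route as the paper: split $M$ along $0 \to M_{\PN} \to M \to M_{/\PN} \to 0$, place $M_{/\PN}$ in $\PP^1_{\RR}$ via Auslander--Buchsbaum, invoke multiplicativity of Fitting ideals for that sequence, match $\cha_{\RR}(M)$ with $\Fitt_{\RR}(M_{/\PN})$, and apply Proposition~\ref{prop:88} to $M_{\PN}$. The only cosmetic difference is that you reprove the Fitting-ideal multiplicativity by the horseshoe lemma and Laplace expansion and spell out $E^2(M) \simeq E^2(M_{\PN})$, whereas the paper cites \cite[Lemma 3]{CG98} and leaves the $\Ext$ identification implicit.
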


\begin{proof}
Consider the exact sequence $0 \to  M_{\PN} \to M \to M_{/\PN} \to 0$.
We have $M_{/\PN} \in \PP^1_{\RR}$ by applying the Auslander-Buchsbaum formula.
Then a well-known property of Fitting ideals (see, e.g., \cite[Lemma 3]{CG98}) tells us
\[
\Fitt_{\RR}(M) = \Fitt_{\RR}(M_{/\PN}) \Fitt_{\RR}(M_{\PN}).
\]
By basic properties of characteristic ideals, we have
\[
\Fitt_{\RR}(M_{/\PN}) = \cha_{\RR}(M_{/\PN}) = \cha_{\RR}(M).
\]
On the other hand, we have $M_{\PN} \in \PP^2_{\RR}$, so Proposition \ref{prop:88} implies
\[
\Fitt_{\RR}(M_{\PN}) = \Fitt_{\RR}(E^2(M_{\PN})) = \Fitt_{\RR}(E^2(M)).
\]
This completes the proof.
\end{proof}

\section*{Acknowledgments}

I would like to thank Masato Kurihara for encouraging me in this research and giving valuable comments.
I am also grateful to Mahiro Atsuta and Ryotaro Sakamoto for discussion on the papers \cite{BCG+} and \cite{BCG+b}.
This research was supported by JSPS KAKENHI Grant Number 19J00763.

{
\bibliographystyle{abbrv}
\bibliography{biblio}
}

\end{document}